\theoremstyle{plain}
\newtheorem{thm}{Theorem}[section]
\newtheorem{cor}[thm]{Corollary}
\newtheorem{prop}[thm]{Proposition}
\newtheorem{lem}[thm]{Lemma}
\newtheorem{rem}[thm]{Remark}
\title{Geodesically Complete Hyperbolic Structures}
\author{Ara Basmajian}
\thanks{The first author was supported in part by  a PSC-CUNY Grant and a Simons foundation  grant;  the second author was supported by the National Science Foundation grant DMS 1102440, PSC-CUNY grant, and a Simons foundation grant.}
\author{Dragomir \v Sari\' c}
\begin{document}
\maketitle

\begin{abstract}
 In the first part  of this work  we explore the geometry of  infinite type surfaces and  the relationship  between   its convex core and space of ends.  In particular, we give a geometric proof of a Theorem due to   Alvarez  and   Rodriquez that a geodesically complete hyperbolic surface is made up  of its convex core with funnels attached  along the simple closed geodesic components and half-planes attached   along simple open geodesic components.  We next consider  gluing infinitely many  pairs of pants along their  cuffs to  obtain an infinite hyperbolic surface. Such a surface is not always complete; for example, if the cuffs grow fast enough and  the twists are small. 
 We prove that there always exists a choice of twists in the gluings such that the surface is complete regardless of the size of the cuffs. This generalizes the examples of Matsuzaki.

In the second part  we consider complete hyperbolic flute surfaces with rapidly increasing cuff lengths and prove that the corresponding quasiconformal Teichm\"uller space is incomplete in the length spectrum metric. Moreover, we describe the twist coordinates and convergence in terms of the twist coordinates on the closure of the quasiconformal Teichm\"uller space.
\end{abstract}


\section{Introduction}

A pair of pants is a  metrically complete hyperbolic surface of type $(0,3)$ with  boundary
components being either closed geodesics, called {\it cuffs},  or punctures, and at least one boundary component being geodesic.  The pair of pants is made  geodesically  complete  by attaching  appropriate size funnels to each geodesic  boundary  component. A natural way of creating more complicated hyperbolic surfaces is to glue pairs of pants along their cuffs via an isometry (where the cuffs that are glued necessarily have the same length). The gluings are not unique and they depend on a real parameter called the {\it twist}.

Recall that a Riemannian manifold is geodesically complete if and only if it is metrically complete. On the other hand, if the manifold is not metrically complete or has boundary  it is natural to ask  if it has a  geodesic completion.  Alvarez and  Rodriguez (see \cite{AR})
 showed  that a hyperbolic surface constructed from gluing  pairs of pants that  form  a pants decomposition  $X^{\prime}$
 has a  unique metric completion  to the convex  core of a geodesically complete hyperbolic surface $X$ by  attaching  funnels and closed half-planes;
conversely, any geodesically complete hyperbolic surface is obtained by attaching funnels and half-planes to the convex core of the surface. We state this as   (Theorem \ref{thm: completing a hyperbolic surface}) and supply our own proof.   We sometimes say that $X$ is the {\it geodesic completion} of   $X^{\prime}$. The existence of  half-planes  glued to the convex core appears in \cite{Bas}. 

To understand the relationship between the convex core boundary and  the space  of ends  we are led to the notion of a {\it visible end}.  An end is {\it visible}   if there is an open sets worth of tangent vectors that when  exponentiated  exit the end. A geodesic ray that begins in $C(X)$ and  exits a visible end  $e$ must intersect a  boundary component of the convex core $C(X)$;  such boundary components are   called  the 
 {\it components of the  visible end $e$}  (see section  \ref{subsection: Geometry of ends}  for the definition).   For a finite type surface, as is well-known,   a  boundary component of the convex core is a  simple closed geodesic  which bounds a    funnel. The funnel determines the  visible end.  More generally for an infinite type surface  a  boundary component of the convex core   may  be   a simple geodesic  isometric to the real line which bounds a half-plane (see Corollary \ref{cor: simple boundary components} and table \ref{Ends: Geometric type vs. topological type}). 
 The half-planes determine visible ends.  That there can be several half-planes determining  the same  visible end 
is illustrated by our  {\it flute surface} examples in section  \ref{section: Visible  ends  with  equivalent boundary components.}.   A {\it flute surface} is a sequence of pairs of pants glued in succession along common length boundaries.  The flute surface is not  necessarily geodesically complete  but always has a natural geodesic completion by Theorem  \ref{thm: completing a hyperbolic surface}. It is said to be a {\it tight flute surface}  if in addition all the pants holes that have not been glued along are in fact cusps.  The flute surface has a unique {\it infinite type  end}-it is the limit of the  isolated ends. See section 
\ref{section: Topology and geometry of ends} for the basics on ends and 
section \ref{sec: geodesically complete} for more on flute surfaces  (also cf. \cite{AR}, \cite{Bas}, \cite{Bas2}, \cite{BK}, \cite{Ha-Sus} and figure \ref{Flute surface}). 

Let $\Omega$ be a hyperbolic domain in the complex plane. In  the paper \cite{AR}  it is shown that a topological half disc  in $\Omega$ bounding  a maximal interval on $\partial \Omega$    can be straightened to be    a maximal  hyperbolic half-plane. 
We study such a situation when $\Omega$ is the unit disc minus a countable set of points that accumulate on the boundary. In particular we prove
 {\it for any $n \in \mathbb{N} \cup \{\infty \}$ there exists a tight flute surface whose unique infinite type end is a visible end with $n$ components}. Though this can be proven using the work of \cite{AR} in section  \ref{section: Visible  ends  with  equivalent boundary components.} we give a geometric proof  using   the convex core and the notion of visible ends.  (see Theorem \ref{cor: equivalent components}),

The  half-planes  in the above discussion arise for  infinite type 
ends that are not metrically  (or equivalently  not geodesically) complete. 
Namely, if we glue infinitely many  pairs of pants we obtain an infinite surface which might not be complete as a metric space. Indeed, when the lengths of the cuffs of the glued pairs of pants are going to infinity the distance between two cuffs is going to zero. If we choose the twists to be zero and if the distances between cuffs add up to a finite number then the obtained surface has an open finite length geodesic which  leaves every compact set and thus the surface  is not metrically complete (see \cite{Bas}).
Thus a  natural question is whether there is a choice of twists such that the surface is complete regardless of how large the cuffs are.  On his way to showing the existence of countable Teichm\"uller  modular  groups  Matsuzaki (see \cite{M}) demonstrated that such phenomena exist by demonstrating   examples. We show that this is the case for all possible topologies on the infinite surfaces which arise by different patterns of gluing pairs of pants.  That is,  by choosing the twists judiciously, there is no need to attach half-planes to  the infinite type ends since they are already geodesically complete (see Theorem 
\ref{thm:large cuffs complete}).

\vskip .2 cm

\noindent {\bf Theorem 1.}
{\it  Let  $X^{\prime}$ be a (not necessarily  complete)  hyperbolic surface with a pants decomposition.  Then there exists a choice of twists along the cuffs
 of the pants  so that  the induced  hyperbolic surface  $X$, after possibly adding funnels, is a geodesically complete hyperbolic surface.}

\vskip .2 cm

Shiga \cite{Shi}, Allessandrini, Liu, Papadopuolos, Su,  and Sun (cf. \cite{ALPS}, \cite{ALPS1}, \cite{ALPSS}) and others (cf. \cite{Kin}, \cite{BK}, \cite{Sar3}) have studied the Teichm\"uller spaces (quasiconformal and length spectrum) of infinite surfaces either when there is an upper bound on the cuff lengths or when they are given by an explicit construction. These surfaces are complete either because of the upper bound on the cuff lengths or by the construction. For arbitrary surfaces built from the pairs of pants with unbounded cuffs a choice of twists might lead to an  incomplete surface. A priori, one might think that being complete could impose conditions on the speed that the cuff lengths go to infinity (which may influence the Teichm\"uller theory). Theorem 2  says that the completeness of surfaces does not impose constraints on the speed of convergence to infinity of the cuff lengths and this opens the possibility for studying Teichm\"uller spaces of infinite surfaces of various topological types and geometrical shapes.

\vskip .2 cm

We proceed to analyze Teichm\"uller spaces of flute surfaces which are obtained by gluing pairs of pants in a chain with cuff lengths rapidly increasing. 
More precisely, we say a strictly increasing sequence is  {\it rapidly increasing}  if the sum of  the first $n$ elements  is going to infinity slower than the 
$(n+1)$-st element.  We choose the twists using Theorem 2  such that the obtained hyperbolic surface $X_0$ is complete.
Our main focus are the twists under the limits of the quasiconformal deformations when we fix the lengths of the cuffs.

The {\it quasiconformal Teichm\"uller space} $T_{qc}(X_0)$ consists of all quasiconformal deformations of $X_0$ modulo postcomposition by conformal maps and homotopy. The quasiconformal Teichm\"uller space $T_{qc}(X_0)$ has a natural metric given by the $1/2$ of the logarithm of the smallest quasiconformal constant in the homotopy class of a quasiconformal map. 
The {\it length spectrum Teichm\"uller space} $T_{ls}(X_0)$ consists of all homeomorphic transformation of $X_0$ such that the ratio of the lengths of the corresponding simple closed geodesics is bounded away from $0$ and $\infty$. The length spectrum distance is $1/2$ the absolute value of the logarithm of the ratio of the lengths of the corresponding simple closed geodesics. It is a consequence of an inequality due to Wolpert   
(see \cite{W}) that  $T_{qc}(X_0)\subset T_{ls}(X_0)$. 

The next theorem considers the closure (of the slice with fixed cuff lengths and varying twists of a geodesic pants decomposition) of the quasiconformal Teichm\"uller space 
$T_{qc}(X_0)$  in the length spectrum metric. We obtain that the twists can be proportional to the lengths of the closed geodesics which tend to infinity, (see Theorem 
\ref{thm:closure-twists}).

\vskip .2 cm

\noindent
{\bf Theorem 2.} 
{\it Let $X_0$ be a geodesically complete  tight flute surface built by gluing pairs of pants with rapidly increasing cuff lengths $\{ l_n\}$.
Then the closure $\overline{T_{qc}(X_0)}$ of the quasiconformal Teichm\"uller space $T_{qc}(X_0)$ contains all surfaces with the Fenchel-Nielsen coordinates $\{ (l_n,t_n)\}_n$, where $-Cl_n\leq t_n\leq Cl_n$, for $C>0$, and the lengths $\{l_n\}$ correspond to a marked surface in $T_{qc}(X_0)$.}
\vskip .2 cm 

\noindent
{\bf Remark.} {\it In the above theorem and the theorems that follow 
  the base point  $X_0$  of $T_{qc}(X_0)$ corresponds to a fixed choice of twist parameters where the  twists $\{t_n\}$  satisfy  $0 \leq t_n < l_n$. }   
 \vskip .2 cm

We are able to describe the convergence in (the slice with fixed cuff lengths of) $T_{qc}(X_0)$ with respect to the length spectrum metric (see Theorem \ref{thm:conv_basepoint}).

\vskip .2 cm

\noindent
{\bf Theorem 3.}
{\it
Let $X_0$ be a  geodesically  complete tight  flute surface with twists $\{ t_n\}$ and rapidly increasing cuff lengths $\{ l_n\}$.  Let $X_k$ be  marked hyperbolic surface with cuff lengths equal to $\{ l_n\}$ and twists $t_{X_k}(\alpha_n)=t_n+O(l_n)$. If $\lim_{k\to\infty}t_{X_k}(\alpha_n)=t_n$ for each $n$, then
$X_k$ converges to $X_0$ in the length spectrum metric.
}

\vskip .2 cm

Using Theorem 3  we prove that the closure of $T_{qc}(X_0)$ is strictly larger than $T_{qc}(X_0)$ (see Theorem \ref{thm:larger}).

\vskip .2 cm

\noindent

{\bf Theorem 4.}
{\it  If $X_0$ is  a geodesically complete tight flute surface with rapidly increasing cuff lengths, then the  length spectrum Teichm\"uller space $T_{ls}(X_0)$ is strictly larger than the quasiconformal Teichm\"uller space $T_{qc}(X_0)$. More precisely, $\overline{T_{qc}(X_0)}-T_{qc}(X_0)$ is non-empty.}

\vskip .2 cm


{\bf Notation  and contents.} For the convenience of the reader in table \ref{Table:notation} we gather some of  the notation  used in this paper.  The  section  listed is the first place aside from section 1 that  the notation appears.  As a matter of convention we  often use the prime notation such as  $X^{\prime}$ to denote  a not necessarily complete hyperbolic surface. 

In section \ref{section: Topology and geometry of ends}  we discuss the basics of the topology of surfaces including the classification of surfaces using the space of ends. Then we move to the basics of the geometry of ends (\ref{subsection: Geometry of ends}).
In section  \ref{sec: Pants decompositions and  the classification of ends} we discuss pants decompositions  and  the relationship between the boundary  components of the convex core and visible ends.  Section  \ref{section: Visible  ends  with  equivalent boundary components.}  has examples of visible ends having more than one component.
In section  \ref{sec: geodesically complete} we address the question of finding a  geodesically complete structure with rapidly increasing cuffs. In Section \ref{sec:flute_rapid} we define a flute surface whose cuff lengths rapidly increase. In section \ref{sec:Teichmuller} we define the quasiconformal and length spectrum Teichm\"uller spaces. In section \ref{sec:closure} we discuss various facts about the Teichm\"uller space of flute surface with rapidly increasing cuff lengths.

 \begin{table}[htdp]
\caption{Definitions and notation}
\begin{center}
 \begin{tabular}{| l | c | c  |}
\hline\hline 
  {\bf Definition} & {\bf Section}   &{\bf Notation} \\
 \hline
 hyperbolic plane & 2& $\mathbb{H}$  \\
\hline
 unit disc in complex plane &  2  &$\Delta$\\
\hline
 quasiconformal Teichm\"uller space&  7  &$T_{qc}$ \\
 \hline 
 Teichm\"uller distance &  7 &   $d_T$ \\
\hline
 length spectrum Teichm\"uller space& 7 &  $T_{ls}$ \\
\hline
 length spectrum distance& 7  &$d_{ls}$ \\
\hline
  convex core& 2&$C(X)$ \\
\hline
 boundary of convex core&  2 &$\partial C(X)$ \\
\hline
equivalent boundary components of  convex core&  2&$ b_1 \sim b_2$ \\
\hline

space of ends&  2 &$\mathcal{E}_X$ \\
\hline
space of non-planar ends&  2  &$\mathcal{N}_X$ \\
\hline
 visible ends& 2 &$\mathcal{VE}_X$\\
\hline
 limit set & 2 &$\Lambda (\Gamma)$ \\
\hline
 set of discontinuity & 2  &$\Omega (\Gamma)$ \\
\hline
$X$-length  of  $\alpha$& 7  &$\ell_{X}(\alpha)$ \\
\hline
 flute surface &5  &  \\
\hline
 tight flute surface & 5 &  \\
\hline
 rapidly increasing sequence& 6  &  \\
\hline

\end{tabular} 
\end{center}
\label{Table:notation}
\end{table}



\section{Topology and geometry of ends} \label{section: Topology and geometry of ends}

In this section we  discuss some basics on topology and geometry, introduce the concept of a visible end and  set-up notation.  As  references  for the basics on hyperbolic geometry and discrete groups we refer to the books of  Beardon \cite{Bear} and Buser \cite{Bus}.


\subsection{Topology of  ends}

A surface  is of {\it finite topological type} if it has a finitely generated fundamental group. 
Otherwise we say it is of {\it infinite topological type}. The  proof of the classification of infinite type surfaces  can be found in a paper of 
Ian Richards  (\cite{R}).  We refer the reader to the paper (\cite{BK}) for a discussion  on  ends and  notation.  All  surfaces  in this paper are triangulable and orientable.  Since we are interested in Riemann surfaces all of our surfaces satisfy these two assumptions.  

Fix  $X$  a topological   surface with non-abelian fundamental group, and $\{X_k\}$
 a compact exhaustion of $X$.  Let 
  $\mathcal{C}_1 \supset \mathcal{C}_2 \supset \cdots \supset \mathcal{C}_k \supset \cdots$  be a nested sequence of subsets of $X$ so that,  for each $k$,
 $\mathcal{C}_k$ is a connected component of $X-X_k$. Two such sequences $\{\mathcal{C}_k\}$ and $\{\mathcal{C}_{k}^{\prime}\}$ are equivalent if 
 for each subset  $\mathcal{C}_k$, 
 $\mathcal{C}_{k+n}^{\prime}\subset  \mathcal{C}_k$ for large  $n$, and vice-versa. 
 These equivalence classes form the   {\it space  of  ends} denoted $\mathcal{E}_X$. We usually use a representative sequence to denote the equivalence class of an end. 
 We next describe   a basis for the topology on the space of ends.  Let $U$ be a connected component of   $X-X_k$. Define,
 
 \begin{equation} \label{ }
 U^{\ast}=\{e \in \mathcal{E}_X: e=\{\mathcal{C}_k\}\,  \text{and}\, \mathcal{C}_k \subset U, 
 \text{for large}\, 
 k\}.
\end{equation}
The set of all such $U^{\ast}$ form a basis for the topology of  $\mathcal{E}_X$.   The topology of $\mathcal{E}_X$ does not depend on the choice of compact exhaustion. 
The subspace $\mathcal{N}_X \subset \mathcal{E}_X$  denotes   the  subspace of  non-planar ends; an end   $e=\{\mathcal{C}_k\}$  is {\it non-planar}  if  each 
$\mathcal{C}_k$ has infinite genus.  The non-planar ends form a closed subset of 
$\mathcal{E}_X$.

A homeomorphism $f:X \rightarrow Y$ between surfaces induces a
homeomorphism of pairs,
\begin{equation}
f_{*}:\left(\mathcal {N}_X, \mathcal{E}_X\right)  \rightarrow 
\left(\mathcal{N}_Y,\mathcal {E}_Y\right),
\end{equation}
and hence the pair $\left(\mathcal{N}_X,\mathcal{E}_X\right)$ is a topological 
invariant of $X$ (called the {\it end invariants} of $X$). If $X$
is of finite topological type then $X$ is a closed surface with
$|\mathcal{E}_X| <\infty$ points deleted, and $\mathcal{N}_X=\emptyset$. More particularly, we will say that   $e \in \mathcal{E}_X$ is  a {\it finite topological type end}   if   $e$ is planar and  isolated in   $\mathcal{E}_X$. Otherwise, it is an   {\it infinite topological type end}.  Clearly, a surface  is of infinite topological type if and only if there exists an end of infinite topological type.

\begin{thm}[I. Richards, \cite{R}] \label{thm:classificationofsurfaces}
The orientable surfaces $X$ and $Y$ are topologically equivalent
if and only if $\text{genus}(X)=\text{genus}(Y)$ and $\left(\mathcal{N}_X,\mathcal{E}_X\right)$ is homeomorphic (as pairs) to $\left(\mathcal{N}_Y,\mathcal{E}_Y\right)$.
\end{thm}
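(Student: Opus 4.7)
The plan has two halves: necessity is a short check, and sufficiency carries the bulk of the work. For necessity, a homeomorphism $f\colon X\to Y$ carries any compact exhaustion of $X$ to a compact exhaustion of $Y$ and sends connected components of complements to connected components of complements, so it induces a homeomorphism $f_{*}\colon\mathcal{E}_X\to\mathcal{E}_Y$. Since the property that every nested complementary neighborhood has infinite genus is intrinsic, $f_{*}$ restricts to a homeomorphism $\mathcal{N}_X\to\mathcal{N}_Y$. Genus itself is a topological invariant (via $H_1$, or by counting handles inside a compact exhaustion).

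For sufficiency I would construct the homeomorphism $X\to Y$ from compact exhaustions, using the classification of compact orientable surfaces with boundary as the local building block. Fix exhaustions $\{X_k\}$ and $\{Y_k\}$ by connected sub-surfaces whose complements have no relatively compact components; these exist on every triangulable orientable surface. Given the homeomorphism $h\colon(\mathcal{N}_X,\mathcal{E}_X)\to(\mathcal{N}_Y,\mathcal{E}_Y)$, the key is to pass to subexhaustions so that the clopen partitions of $\mathcal{E}_X$ and $\mathcal{E}_Y$ induced by the components of $X\setminus X_k$ and $Y\setminus Y_k$ correspond bijectively under $h$, and moreover each compact collar $X_{k+1}\setminus\mathrm{int}(X_k)$ agrees with the corresponding collar in $Y$ in genus and in number of boundary circles on each side. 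Once such compatible exhaustions are in place, the classification of compact orientable surfaces with boundary produces homeomorphisms $f_k\colon X_k\to Y_k$ extending across each collar so that $f_{k+1}|_{X_k}=f_k$, and the direct limit $f=\lim f_k$ is the required homeomorphism; the induced map on ends agrees with $h$ by construction.

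The main obstacle is producing compatible exhaustions. Aligning the clopen partitions is possible because $\mathcal{E}_X$ is compact and zero-dimensional, so both sequences of partitions generate its topology and hence admit a common refinement by passing to subsequences. The genus bookkeeping is more delicate: handles can be absorbed across a collar by isotoping the exhausting sub-surface, so by refining finitely often one can redistribute genus among the finitely many pieces at each stage until the counts agree. Handling the non-planar subset requires ensuring that handles accumulate on exactly the prescribed closed set $h(\mathcal{N}_X)=\mathcal{N}_Y$, which I would arrange by the combinatorial constraint that every component of $X\setminus X_k$ whose associated clopen set meets $\mathcal{N}_X$ carries positive genus at every stage. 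Once the compatible exhaustions are in hand, the inductive piecing-together is routine.
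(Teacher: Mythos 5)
First, a point of comparison: the paper does not prove this statement at all --- it is quoted as Richards' classification theorem and the reader is referred to \cite{R} --- so your attempt can only be measured against the classical argument, not against anything in this paper. Your necessity direction is fine: a homeomorphism carries compact exhaustions to compact exhaustions, hence induces a homeomorphism of end spaces preserving the non-planar ends, and genus is a topological invariant.

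The sufficiency direction, however, has genuine gaps exactly at the two places where the classical proof does its real work. (1) You claim that by ``passing to subexhaustions'' one can make the collars $X_{k+1}\setminus\mathrm{int}(X_k)$ and $Y_{k+1}\setminus\mathrm{int}(Y_k)$ agree in genus and in the number of boundary circles on each side. Passing to a subsequence only coarsens/refines the clopen partitions of the end spaces; it does not control how many boundary circles a complementary component meets, nor the incidence pattern between pieces of consecutive levels, which depend on the choice of exhausting subsurfaces and are not determined by the end data. Arranging this requires modifying the exhaustions themselves (e.g.\ Richards' canonical exhaustions, in which each complementary component is unbounded and the boundary data is normalized, or his reduction of every surface to a canonical model: a sphere minus a totally disconnected closed set with handles attached accumulating exactly on the non-planar ends), and this normalization is a lemma, not a bookkeeping remark. (2) The genus matching is asserted, not proved: ``handles can be absorbed across a collar by isotoping the exhausting sub-surface'' is not accurate --- genus cannot be isotoped across a boundary circle; one must enlarge the compact piece to engulf handles, and whether the genus counts of corresponding pieces can be equalized at every stage is precisely where the hypotheses $\mathrm{genus}(X)=\mathrm{genus}(Y)$ and $h(\mathcal{N}_X)=\mathcal{N}_Y$ must enter (in the infinite-genus case one needs that handles occur arbitrarily far out in exactly those complementary components whose end sets meet $\mathcal{N}_X$, and the corresponding statement for $Y$). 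As written, your plan assumes the conclusion of these two lemmas; with them supplied, the inductive extension and direct limit step you describe is indeed routine, so the outline is the right strategy but not yet a proof.
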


Let $e$ be an end of the topological surface $X$.  We say that a  sequence of compact sets 
$\{K_i\}$  {\it exit  the end}  $e$,  if   $\{K_i\}$ converges to $e$ in the space $X\cup \mathcal{E}_X$.    By abuse of language, we also say that a path 
$\gamma : [0, \infty) \rightarrow X$ {\it exits the end} $e$ if $\gamma (t)$
converges to $e$ as $t$ goes to infinity.  In the sequel,  we will often be interested in surfaces with a hyperbolic structure  and hence typically  the  $\{K_i\}$ will either 
be sequences of pairs of pants or simple closed geodesics and the paths  $\gamma$ will be geodesic rays.


\subsection{Geometry of  ends} \label{subsection: Geometry of ends} 

We denote the real part, resp. imaginary part, of a complex number $z$ by  $\Re({z})$,
resp. $\Im(z)$.  
A {\it funnel} is   a hyperbolic surface with one geodesic  boundary  component which  is isometric to 
$D/<z \mapsto e^{\ell}z>$, where $D=\{z \in U: \Re({z}) \leq 0\}$ has the induced metric as a  subspace of the  upper half-plane model  $U$ of the hyperbolic plane.   Funnels are  annuli with one  geodesic boundary  component whose   length  $\ell$ determines the funnel.
A (standard)  {\it  cusp} is  a hyperbolic surface with one horocyclic   boundary  component   which is isometric to the quotient $\{z: \Im(z) \geq 1\}/<z \mapsto z+1>$,  where  again
$\{z: \Im(z) \geq 1\}$ has the induced metric as a  subspace of the  upper half-plane model.
  It is well-known that any finite type geodesically complete hyperbolic surface has  ends that are either cusps or funnels. 
 We say  that  $Y \subset X$  is a 
{\it geodesic subsurface}  of the hyperbolic surface $X$ if it is a  subsurface with geodesic boundary.

A Riemannian manifold is {\it geodesically complete} if every geodesic can be extended infinitely far in both directions. Geodesic completeness is equivalent to the induced  Riemannian  (metric) distance being  complete. A  geodesically complete hyperbolic surface   $X$  is the quotient of the hyperbolic plane,
$\mathbb{H}$,  by a torsion-free discrete   non-elementary ({\it Fuchsian})  group  
$\Gamma$ of orientation preserving isometries.  The action of $\Gamma$ on the 
ideal boundary of the hyperbolic plane breaks up into the limit set  $\Lambda (\Gamma)$ and the (possibly empty) set of discontinuity, $\Omega (\Gamma)$.  The set of discontinuity  is made up of a countable union of {\it intervals of discontinuity}. It is well-known that the  stabilizer in $\Gamma$ of an interval of discontinuity  is either  generated by a hyperbolic element or is trivial. Only the first possibility  occurs if  $\Gamma$ is finitely generated.  That such a  stabilizer can be trivial if $\Gamma$ is  infinitely generated  is investigated in the paper   \cite{Bas}.  The {\it convex core} of $X$, $C(X)$, is the quotient of the convex hull of the limit set,  $CH(\Lambda (\Gamma))/ \Gamma$. The convex core is the smallest closed convex  subsurface  (with boundary) which  carries all the homotopy.  In particular, all closed geodesics are contained in 
$C(X)$.  Let $X=\mathbb{H}/\Gamma$ be a geodesically complete hyperbolic surface. $X$ (or  $\Gamma$) is said to be of the  {\it  first kind}  if $\Lambda (\Gamma)=\partial \mathbb{H}$; equivalently,  $C(X)=X$. Otherwise it is of the {\it second kind}.  
We say that a sequence of oriented geodesics $\{L_i\}$ in $\mathbb{H}$ converge to the oriented geodesic 
$L$ if the endpoints of the $\{L_i\}$ converge to the endpoints of $L$. That is, the space of 
oriented geodesics  can be identified with 
$\mathbb{S}^1  \times   \mathbb{S}^1 -\{diagonal\}$. Sometimes we are not interested in orientation of the  geodesics  and so we say   $\L_i\}$ converges to $L$ if up to changing orientations the convergence occurs.  On a hyperbolic surface a sequence of geodesics
$\gamma_i$ is said to converge to the  geodesic  $\gamma$ if the geodesics have lifts to 
the hyperbolic plane so that $\tilde{\gamma_i}$ converges to $\tilde{\gamma}$.

To study the end  geometry of a hyperbolic surface we introduce the notion of a visible  end.  An end $e$ of $X$ is said to be  {\it visible} if there exists an open set $V$ in the unit tangent bundle of $X$ so that for any $v \in V$, the induced geodesic ray $g_v$ exits $e$. 
Otherwise, the end is  said to be  {\it non-visible} or {\it complete}.  We denote the visible ends by $\mathcal{VE}_X$. The next lemma   allows us to describe a visible end in three different ways.

 \begin{lem} \label{lem: visible ends}  Let $X$ be a geodesically complete hyperbolic surface. The following are equivalent.
 \begin{enumerate}
 \item $e$ is a visible end,
 \item there exists of point $x \in X$ and a cone of vectors based at $x$ so that their corresponding geodesics rays exit $e$. 
 \item   there exists a geodesic ray   in $X$  that exits $e$  and leaves $C(X)$ in finite time.
 \end{enumerate}

 \end{lem}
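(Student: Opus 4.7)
The equivalence $(1)\Leftrightarrow (2)$ is essentially a matter of unpacking definitions.  That $(2)$ implies $(1)$ is clear, since a cone of vectors at a single basepoint is already an open subset of the unit tangent bundle.  For $(1)\Rightarrow (2)$, pick any $v_{0}$ in the open set $V$ with basepoint $x_{0}$ and use the local product structure of the unit tangent bundle near $v_{0}$ to find a neighborhood of the form $U\times W\subseteq V$, where $U$ is a neighborhood of $x_{0}$ in $X$ and $W$ is an open arc of directions; then $\{x_{0}\}\times W$ is a cone at $x_{0}$ whose rays all exit $e$.

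For $(3)\Rightarrow (2)$, let $\gamma$ be a ray exiting $e$ that leaves $C(X)$ at some time $t_{0}$, and set $x=\gamma(t_{1})$ for some $t_{1}>t_{0}$.  Then $x$ lies in a connected component $F$ of $X\setminus C(X)$, which by Theorem \ref{thm: completing a hyperbolic surface} is a funnel or a half-plane with associated end $e$.  Lift $\gamma$ to $\tilde{\gamma}$ in $\mathbb{H}$ ending at a point of an interval of discontinuity $I$ whose corresponding half-plane $H\subseteq \mathbb{H}\setminus CH(\Lambda(\Gamma))$ projects to $F$.  At $\tilde{x}\in H$, the directions whose associated geodesics terminate in the open arc $I$ form an open cone of directions; each such ray is trapped inside $H$ (a hyperbolic geodesic cannot meet $\partial H$ twice) and projects to a ray in $F$ going to infinity, hence exiting $e$.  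This is the cone required by $(2)$.

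For $(2)\Rightarrow (3)$, lift the cone $C$ at $x$ to a cone $\tilde{C}$ at $\tilde{x}\in\mathbb{H}$, and let $A\subseteq \partial\mathbb{H}$ be the open arc of endpoints of the rays in $\tilde{C}$.  A ray $\gamma_{v}$ in $X$ exits $C(X)$ precisely when its lift eventually enters a half-plane component of $\mathbb{H}\setminus CH(\Lambda(\Gamma))$, which happens exactly when its endpoint lies in $\Omega(\Gamma)$, so it suffices to show $A\cap \Omega(\Gamma)\neq \emptyset$.  Suppose for contradiction that $A\subseteq \Lambda(\Gamma)$.  Since attracting fixed points of hyperbolic elements are dense in $\Lambda(\Gamma)$ for non-elementary Fuchsian $\Gamma$, there is a hyperbolic $g\in \Gamma$ with attracting fixed point $q\in A$.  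The ray with lifted endpoint $q$ is forward-asymptotic to the axis of $g$, so its projection to $X$ spirals into the closed geodesic that is the image of the axis.  In particular the projected ray has accumulation points in a fixed compact set and cannot leave every compact subset, so it cannot exit any end of $X$, contradicting the hypothesis that every $v\in C$ gives a ray exiting $e$.

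The main obstacle is to make the universal-cover picture in $(3)\Rightarrow (2)$ precise: one must know that connected components of $X\setminus C(X)$ correspond bijectively to $\Gamma$-orbits of half-plane components of $\mathbb{H}\setminus CH(\Lambda(\Gamma))$ and that each funnel or half-plane in $X$ has a single well-defined end of $X$ to which all of its geodesic rays going to infinity exit.  These facts are contained in the Alvarez-Rodriguez structure theorem (Theorem \ref{thm: completing a hyperbolic surface}) and the paper's own discussion of visible ends, but care is required because for infinitely generated $\Gamma$ an interval of discontinuity may have trivial stabilizer (yielding a half-plane) instead of a hyperbolic one (yielding a funnel), and several distinct half-planes may correspond to the same end.
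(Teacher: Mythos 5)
Your overall route is the same as the paper's (which is far terser): lift to the universal cover, play the endpoints of the rays off against the decomposition of $\partial\mathbb{H}$ into $\Lambda(\Gamma)$ and $\Omega(\Gamma)$, and use the half-plane beyond a boundary geodesic of the convex hull to manufacture a cone's worth of rays. Your density-of-hyperbolic-fixed-points argument for $(2)\Rightarrow(3)$ is a welcome justification of the paper's bare assertion that the arc of endpoints must lie in an interval of discontinuity, and your appeal to Theorem \ref{thm: completing a hyperbolic surface} in $(3)\Rightarrow(2)$, while a forward reference, is not circular (that theorem's proof does not use this lemma); in fact you do not really need it, since all you use is that a point outside $C(X)$ lifts into an open half-plane component of $\mathbb{H}\setminus CH(\Lambda(\Gamma))$ bounded by the geodesic spanning an interval of discontinuity, which is elementary convex-hull geometry.

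There is, however, one step that fails as written: your justification of $(2)\Rightarrow(1)$. A cone of vectors based at the single point $x$ is a one-parameter family inside the fiber of the unit tangent bundle over $x$; it has empty interior in the three-dimensional unit tangent bundle, so it is not an open set $V$ of the kind the definition of a visible end requires. As a result the implications you actually establish are $(1)\Rightarrow(2)$, $(2)\Rightarrow(3)$ and $(3)\Rightarrow(2)$, and you never validly return to $(1)$. The gap is easy to fill with material you already have: in your $(3)\Rightarrow(2)$ construction, take $V$ to be the projection to the unit tangent bundle of the set of unit vectors with basepoint in the open half-plane $H$ and forward ideal endpoint in the open interval $I$; this set is open (the basepoint and endpoint maps are continuous and the covering projection is a local diffeomorphism), every such ray is trapped in $H$, and its projection leaves every compact set while staying in the corresponding component of $X\setminus C(X)$, hence exits $e$, giving $(3)\Rightarrow(1)$. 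Alternatively, observe that two rays with the same ideal endpoint are asymptotic and therefore exit the same end, so the exiting property survives a small perturbation of the basepoints of the cone's vectors, which again produces the required open set in the unit tangent bundle.
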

 
 \begin{proof}  The equivalence of items (1) and (2) is  clear. For the equivalence of (2) and (3),  suppose  there is a cone of vectors based at  a point of $X=\mathbb{H}/\Gamma$ for which the corresponding geodesic rays exit $e$. Lifting these geodesic rays  to the universal covering  
 $\mathbb{H}$  and noting that they hit $\partial \mathbb{H}$ in an interval it is clear that this interval must be contained in one of the intervals of discontinuity of $\Gamma$. But then  these geodesic rays leave 
 $C(X)$ in finite time. The converse follows from the fact that if one such leaves $C(X)$ in finite time then there is a cone's  worth that does. 
 \end{proof}
  
  If $X$ has a funnel then the end corresponding to the funnel is a visible  end, and corresponds to exactly one component of the complement of $C(X)$.   
Recalling  that an  end $e$ is of finite (topological)  type if it  is isolated in  $\mathcal{E}_X - \mathcal{N}_X$,
  a  finite type end  is not visible if  it corresponds to a cusp of the surface, and visible  if it corresponds to a funnel.  Thus, as is well-known, we have a nice description of the end geometry of a   hyperbolic surface with a finitely generated fundamental group.

 Let $X$ be a geodesically complete hyperbolic surface. We next  define an equivalence relation on the  boundary components of $C(X)$.  Namely,
 two boundary components $b_1$ and $b_2$ of $\partial  C(X)$ are {\it equivalent}, denoted $b_1 \sim b_2$,  if 
 there exist  two geodesic rays based in   $C(X)$ that go out the same end where  one of them crosses  $b_1$ and the other crosses $b_2$.   This is clearly an equivalence relation and we denote the set of boundary components that are equivalent to the boundary component $b$ by $\{b\}$. 
 
 Now given $e \in   \mathcal{VE}_X$,  let $\gamma$ be a geodesic ray based  in 
 $C(X)$ and  exiting  $e$. Then  $\gamma$ must intersect a boundary component, say 
  $b$, of   $C(X)$. This defines a well-defined mapping,
   $B: \mathcal{VE}_X \rightarrow \partial C(X)/ \sim$ given by $e \mapsto \{b\}$ which is easy to see is a bijection.  
   
     Thus  a  visible end of a geodesically complete hyperbolic surface $X$ corresponds to an equivalence class of connected components of the complement of $C(X)$. The boundary components that correspond (by the bijection)  to the visible  end $e$ we call the 
    {\it components} of $e$. Of course  in the case of  a funnel  there is exactly one  component  in its equivalence class. That there can be more than one component is investigated in section \ref{section: Visible  ends  with  equivalent boundary components.}.


\section{Pants decompositions and  the classification of ends}
\label{sec: Pants decompositions and  the classification of ends}

In  this section  we discuss a geometric classification of ends. This is a consequence of Theorem \ref{thm: completing a hyperbolic surface} which is due to Alvarez and Rodriguez (see \cite{AR}).  We give a different proof of  this  theorem  which suits our point of view and   leads  to  the  geometric classification  of ends. See lemma 2, section 6 of \cite{Bas} for   the   need to use   closed half-planes in order to geodesically complete the convex core. This half-plane is bounded by a simple open geodesic which is the limit of an infinite sequence of simple closed geodesics. Hence the closed half-plane is also maximal.

A {\it topological  pair  of pants}   is a sphere with three disjoint closed discs removed. We  sometimes include the three  boundary circles  as part of our topological pants.   The context should make it clear. 
A {\it geodesic pair of pants}  is a sphere with three disjoint closed discs removed  endowed with a hyperbolic metric where the boundary curves are  geodesic.    We allow the possibility that the pair of pants has one or two   cusps (a so called {\it tight pair of pants}). A pair of pants has a natural geodesic completion to a complete hyperbolic structure where  each  geodesic boundary component  is  completed by  a  funnel. By abuse of language we sometimes    call the  geodesically complete surface  a pair of pants.  More generally,
any surface  $X^{\prime}$  made up of a  finite number of pairs of pants glued along 
common  cuffs has a unique geodesic completion $X$  by adding funnels.  Furthermore,  
$X^{\prime}=C(X)$. In fact, any geodesically complete hyperbolic surface with finitely generated fundamental group arises in this way.  A  {\it topological pants decomposition}  of a surface is a locally finite decomposition  by pairs of pants where the  pants curve are  homotopically distinct and non-trivial. A topological pants decomposition is a  {\it geodesic  pants decomposition}  if the pairs of pants are geodesic pairs of pants. 
 For ease of language, we will often drop  the adjective {\it geodesic}  before the terms  "pair of pants" and "pants decomposition." The context  should make it clear.

As we saw in section  \ref{section: Topology and geometry of ends}
the boundary   of $C(X)$  in $X$  is the union of  simple  closed geodesics and simple open geodesics. Denoting the simple open ones by $\{L_i\}$, consider the surface with boundary $C(X)- \cup \{L_i\}$.

\begin{prop} \label{prop: straightening pants decompositions}
Let $X$ be a  geodesically complete hyperbolic surface.  Every  topological pants decomposition of   $C(X)- \cup \{L_i\}$ can be straightened to a geodesic pants decomposition. 
\end{prop}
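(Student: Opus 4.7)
The plan is to straighten each topological pants curve individually to its unique closed geodesic representative, verify that the resulting collection is pairwise disjoint and locally finite, and then identify the complementary regions with geodesic pairs of pants.

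Fix the uniformization $X=\mathbb{H}/\Gamma$. Each pants curve $\alpha$ in the given topological decomposition of $C(X)-\cup\{L_i\}$ is a nontrivial simple closed curve either on $\partial C(X)$ or in the interior of the surface. If $\alpha$ is a simple closed geodesic component of $\partial C(X)$ no straightening is required. Otherwise $\alpha$ is essential and not homotopic into a cusp, since it bounds a topological pair of pants on both sides, so it represents a primitive hyperbolic conjugacy class in $\Gamma$; the axis of a representative projects to the unique simple closed geodesic $\alpha^{\ast}$ freely homotopic to $\alpha$. Simplicity of $\alpha^{\ast}$ is the standard universal cover argument: two crossing translates of the axis would force two crossing translates of a lift of $\alpha$, contradicting simplicity of $\alpha$. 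The same lifting argument applied to two disjoint pants curves $\alpha,\beta$ shows that $\alpha^{\ast}$ and $\beta^{\ast}$ are disjoint, and distinctness of their free homotopy classes (two disjoint simple closed curves in the same class would cobound an annulus, contradicting that distinct pants curves are non-homotopic) gives $\alpha^{\ast}\neq\beta^{\ast}$.

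The main obstacle will be local finiteness of the family $\{\alpha^{\ast}\}$. Given a compact $K\subset C(X)-\cup\{L_i\}$, local finiteness of the topological decomposition produces a compact subsurface $\tilde K\supset K$ that is a finite union of decomposition pants, bounded by finitely many pants curves $\gamma_1,\dots,\gamma_m$. Every other pants curve $\alpha$ is disjoint from each $\gamma_j$, so $\alpha^{\ast}$ is disjoint from each $\gamma_j^{\ast}$. I would then enlarge $\tilde K$ through the given exhaustion so that the region $\tilde K^{\ast}$ bounded by $\gamma_1^{\ast},\dots,\gamma_m^{\ast}$ still contains $K$; this uses Baer's theorem (with its standard extension to pairwise disjoint simple closed curves on infinite-type surfaces) to realize the straightening of $\partial\tilde K$ by an ambient isotopy of $\tilde K$ together with a collar. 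Any pants curve $\alpha$ not among the finitely many inside $\tilde K$ then has $\alpha^{\ast}$ entirely outside $\tilde K^{\ast}$, hence disjoint from $K$. This is the step I expect to require the most care, because the straightening isotopy must be controlled in the direction of the removed simple open geodesics $\{L_i\}$ and the non-visible ends of $C(X)$, which is precisely what the ambient isotopy upgrade of straightening delivers.

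Once $\{\alpha^{\ast}\}$ is known to be pairwise disjoint and locally finite, the same ambient isotopy shows that each complementary region is homeomorphic to the corresponding topological pair of pants in the original decomposition and now has geodesic or cuspidal boundary (each boundary component is a straightened $\alpha^{\ast}$, a closed geodesic component of $\partial C(X)$, or a cusp of $X$). A topological pair of pants with geodesic and cuspidal boundary is a geodesic pair of pants, completing the straightening.
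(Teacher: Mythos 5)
Your reduction to straightening the individual pants curves, and the standard universal-cover arguments for simplicity, pairwise disjointness and distinctness of the geodesic representatives, are fine and would open any proof of this proposition. The genuine gap is the sentence ``I would then enlarge $\tilde K$ through the given exhaustion so that the region $\tilde K^{\ast}$ bounded by $\gamma_1^{\ast},\dots,\gamma_m^{\ast}$ still contains $K$.'' That is not something Baer's theorem delivers: the ambient isotopy carrying $\partial\tilde K$ to its geodesic representative comes with no control on its support or on where the geodesics $\gamma_j^{\ast}$ sit relative to $K$, and enlarging $\tilde K$ does not by itself help. The straightened regions are indeed nested as $\tilde K$ grows along the exhaustion, but a priori their union could be a proper subsurface of $C(X)-\cup\{L_i\}$, with points (for instance near the open boundary geodesics $L_i$, which are limits of simple closed geodesics) lying outside every straightened region. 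What you are asserting is exactly the covering/exhaustion statement that carries the whole weight of the proposition: every compact subset of $C(X)-\cup\{L_i\}$ is contained in the geodesic subsurface obtained by straightening some finite union of decomposition pants. If this failed, your local finiteness argument would collapse, and the complementary regions of the family $\{\alpha^{\ast}\}$ would include pieces that are not pairs of pants; so your final paragraph silently uses the same unproved statement (and there is no single ``same ambient isotopy'' straightening the whole infinite family to appeal to).

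The paper's proof is devoted precisely to this point. Writing $Y_n$ for the straightening of the $n$-th term $K_n$ of the exhaustion, one has $Y_n=C_n/\Gamma_n$ where $\Gamma_n\leq\Gamma_{n+1}$ are the corresponding Fuchsian subgroups, $\Gamma=\langle\Gamma_n\rangle$, and $C_n$ is the convex hull of $\Lambda(\Gamma_n)$; then $\overline{\bigcup_n C_n}$ is a closed convex $\Gamma$-invariant set, its ideal boundary is a closed $\Gamma$-invariant subset of $\partial\mathbb{H}$ containing limit points, so by minimality of the limit set it equals $\Lambda(\Gamma)$, whence $\overline{\bigcup_n C_n}=CH(\Lambda(\Gamma))$ and the $Y_n$ exhaust $C(X)-\cup\{L_i\}$. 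Some argument of this kind (limit sets and convex hulls in $\mathbb{H}$, or an equivalent substitute) must be supplied; without it, both your local finiteness step and your identification of the complementary regions with geodesic pairs of pants are assertions rather than proofs.
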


 It is a consequence of Richards 
classification result (\cite{R})  that any infinite type surface admits  a topological exhaustion by finite type surfaces. Hence if  $X$ is a geodesically complete hyperbolic surface this fact coupled with   proposition   \ref{prop: straightening pants decompositions} 
supplies us with a short  proof  of the following corollary.

\begin{cor} \label{cor: geodesic pants decomposition} Let $X$ be a geodesically complete hyperbolic surface. Then 
$C(X)-\{L_i\}$ has a geodesic pants decomposition.
\end{cor}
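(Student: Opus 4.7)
The plan is to produce a topological pants decomposition of $C(X)-\cup\{L_i\}$ and then invoke Proposition \ref{prop: straightening pants decompositions} to straighten it to a geodesic one. The surface $C(X)-\cup\{L_i\}$ is an orientable surface whose boundary components (if any) are the simple closed geodesic components of $\partial C(X)$, and whose ends include both the original ends of $X$ and any new ends produced by removing the open geodesics $L_i$. In particular it is triangulable and orientable, so Richards' classification theorem (Theorem \ref{thm:classificationofsurfaces}) applies.

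First, I would build a compact exhaustion $\{Y_k\}$ of $C(X)-\cup\{L_i\}$ by finite type compact subsurfaces with boundary, chosen so that each $Y_k$ has every boundary circle either lying on $\partial(C(X)-\cup\{L_i\})$ or being a two-sided essential simple closed curve in the interior, and so that $Y_{k+1}\setminus Y_k$ is a disjoint union of finite-type planar pieces with at least one new boundary circle. Such an exhaustion exists by the standard compact exhaustion argument combined with Richards' classification. I would ensure along the way that no component of $Y_{k+1}\setminus Y_k$ is an annulus or disc, which can be arranged by absorbing such pieces into adjacent $Y_k$'s.

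Next, each $Y_k$ is a compact finite-type surface with boundary, so it admits a finite topological pants decomposition, and by extending from $Y_k$ to $Y_{k+1}$ inductively (adding pants curves in $Y_{k+1}\setminus Y_k$) one assembles a locally finite collection $\mathcal{P}$ of pairwise disjoint, homotopically distinct, essential simple closed curves in $C(X)-\cup\{L_i\}$ whose complement is a disjoint union of topological pairs of pants. This is the desired topological pants decomposition.

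Finally, I would apply Proposition \ref{prop: straightening pants decompositions} to straighten $\mathcal{P}$ to a geodesic pants decomposition of $C(X)-\cup\{L_i\}$. The main obstacle is the careful construction of the exhaustion and the inductive extension of the pants decomposition: one must avoid producing annular or disc components when passing from $Y_k$ to $Y_{k+1}$, and must guarantee local finiteness of $\mathcal{P}$ in the non-compact surface. Once these topological bookkeeping issues are handled, the geometric content is entirely absorbed by Proposition \ref{prop: straightening pants decompositions}.
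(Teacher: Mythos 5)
Your proposal is correct and follows essentially the same route as the paper: obtain a topological pants decomposition of $C(X)-\cup\{L_i\}$ from an exhaustion by finite type subsurfaces (a consequence of Richards' classification), then straighten it using Proposition \ref{prop: straightening pants decompositions}. One tiny quibble: the pieces $Y_{k+1}\setminus Y_k$ need not be \emph{planar} when $X$ has infinite genus, but nothing in your argument uses planarity, so this does not affect the proof.
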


\begin{rem}  
 In the case that there are no  open geodesics on $\partial C(X)$ 
 corollary \ref{cor: geodesic pants decomposition} is a  result in \cite{ALPSS}.
\end{rem}

The proof of proposition  \ref{prop: straightening pants decompositions} follows. 

\begin{proof} Let $X=\mathbb{H} /\Gamma$.  Since any (topological) pants decomposition of a surface with boundary induces an exhaustion  by finite type   (topological) subsurfaces and vice-versa, it is enough to show that a  topological exhaustion by finite type subsurfaces $\{K_n\}$ of  $C(X)- \cup \{L_i\}$ straightens to an exhaustion by finite area geodesic subsurfaces.  To see this  let $Y_n$, for each $n$,
 be the subsurface $K_n$ with boundary curves straightened to geodesics; $\{Y_n\}$ is a  geodesic subsurface of  $C(X)- \cup \{L_i\} \subset X$.  $Y_n$ is also homeomorphic to 
 $K_n$, for each $n$, which already implies that the $\{Y_n\}$ are locally finite and hence the straightened geodesic  pairs of pants are locally finite.

 We  are left to show that the 
 $\{Y_n\}$ cover  $C(X)- \cup \{L_i\}$. Now, by way of the isomorphism between the fundamental group of $X$ and the group $\Gamma$  there exists 
 a torsion-free Fuchsian subgroup $\Gamma_n$ of $\Gamma$ so that 
 $Y_n=C_n/\Gamma$, where  $C_n$ is the convex hull of the limit set of $\Gamma_n$. 
 Moreover the $\{\Gamma_n\}$ can be chosen so that  $\Gamma_n \leq
\Gamma_{n+1}$, and hence  $C_n \subseteq C_{n+1}$.  Note that since 
the fundamental group of  $C(X)- \cup \{L_i\}$ is isomorphic to $\Gamma$, 
$\Gamma =<\Gamma_n>$ and $\Gamma$ keeps  $\overline{\bigcup_{n} C_n}$ invariant. On the one hand,  $\overline{\bigcup_{n} C_n}$ must be contained in the convex hull of  $\Gamma$, and hence the boundary  at infinity of  $\overline{\bigcup_{n} C_n}$ is contained in the limit set of $\Gamma$.  Since the limit set of $\Gamma$ is the smallest $\Gamma$-invariant non-empty closed subset of $\partial \mathbb{H}$  it must be that   the boundary at infinity of $\overline{\bigcup_{n} C_n}$ is  equal to  the limit set. Hence 
$\overline{\bigcup_{n} C_n}$ is  the convex hull of the limit set, and therefore
$\overline{\bigcup_{n} C_n/\Gamma} =C(X)$.  Thus the geodesic subsurfaces $\{Y_n\}$ exhaust   $C(X)- \cup \{L_i\}$.

\end{proof}

The  ends of a hyperbolic surface constructed from finitely many pairs of pants (that is,  a finite type hyperbolic surface)  are well-known to be geometrically either cusps or funnels.  For a hyperbolic surface constructed from an  infinite number of pairs of pants we have,

\begin{thm}  \label{thm: completing a hyperbolic surface}

Let $X^{\prime}$ be a (not necessarily complete) hyperbolic surface constructed from gluing pairs of pants   that form a pants decomposition of 
$X^{\prime}$. Then $X^{\prime}$  has   a unique metric completion to the convex core of a geodesically complete hyperbolic surface $X$ so that 
$X^{\prime} \subset C(X) \subset  X$.
Moreover,  the geodesic completion  of $X^{\prime}$  
 is attained by adding funnels and  closed
 hyperbolic half-planes.  Conversely, any geodesically  complete hyperbolic surface 
 is the geodesic completion of  a (not necessarily  complete) hyperbolic surface  
 $X^{\prime}$ constructed  from gluing pairs of pants  that form a pants decomposition of  $X^{\prime}$.
\end{thm}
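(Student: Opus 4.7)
The plan is to prove both directions separately, with the forward direction requiring the bulk of the work.

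For the forward direction, I would construct $X$ explicitly via developing. Fix a basepoint in one pair of pants $P_0 \subset X'$ and develop the universal cover $\widetilde{X'}$ isometrically into $\mathbb{H}$; the local isometry extends across each pants cuff using the prescribed gluing, producing a developing map $\widetilde{X'}\to\mathbb{H}$ and a monodromy representation $\rho:\pi_1(X')\to\mathrm{PSL}(2,\mathbb{R})$ with image $\Gamma$. The image $\Gamma$ is discrete because the developed pants have pairwise disjoint interiors of definite hyperbolic area, so no element can move $P_0$ nearby itself nontrivially. Set $X:=\mathbb{H}/\Gamma$; then $X$ is geodesically complete by construction, and the developing map descends to an isometric embedding $X'\hookrightarrow X$ because the pants decomposition ensures distinct pants of $X'$ project to distinct pants of $X$.

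Next, I would identify the metric completion $\overline{X'}$ with the convex core $C(X)$. The inclusion $X'\subseteq C(X)$ holds because each pair of pants carries homotopy generators for $\Gamma$ and lies inside $CH(\Lambda(\Gamma))/\Gamma$. For $\overline{X'}\subseteq C(X)$, note that a Cauchy sequence in $X'$ lifts (after passing to a subsequence and applying deck transformations) to a Cauchy sequence in $CH(\Lambda(\Gamma))\subset\mathbb{H}$, hence converges in $C(X)$. For the reverse inclusion $C(X)\subseteq\overline{X'}$, any point $p\in C(X)\setminus X'$ must lie on a boundary component of a developed pants region that fails to be glued; such a boundary component is a geodesic in $\mathbb{H}$ which is either a fixed axis of $\Gamma$ (a closed geodesic that could have been included as a cuff) or arises as a geometric limit of translates of cuff lifts, and in either case is the limit of points of $X'$. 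This shows $\overline{X'}=C(X)$ and identifies $\partial C(X)\setminus X'$ as a union of simple geodesics that are limits of pants curves.

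To finish the structural description, I would analyze the components of $X\setminus C(X)$. Each such component lifts to a half-plane in $\mathbb{H}\setminus CH(\Lambda(\Gamma))$ bounded by a single geodesic $\widetilde{b}$ on $\partial CH(\Lambda(\Gamma))$. The stabilizer of $\widetilde{b}$ in $\Gamma$ is either generated by a hyperbolic element—in which case the component descends to a funnel bounded by a simple closed geodesic—or is trivial, in which case (by the cited phenomenon from \cite{Bas}) the component descends to a closed half-plane bounded by a simple open geodesic that is the limit of infinitely many pants cuffs. Uniqueness of the metric completion follows because it is a topological invariant of $X'$.

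For the converse, given a geodesically complete $X=\mathbb{H}/\Gamma$, Corollary \ref{cor: geodesic pants decomposition} directly provides a geodesic pants decomposition of $X':=C(X)\setminus\bigcup_iL_i$, where $\{L_i\}$ are the simple open boundary geodesics of $C(X)$; then $X'$ is of the required form, and its geodesic completion (adding back the $L_i$, the bounding half-planes, and the funnels at closed boundary geodesics) recovers $X$. The main obstacle I anticipate is the inclusion $C(X)\subseteq\overline{X'}$: one must show that every boundary geodesic of $C(X)$ not realized as a pants cuff arises as a geometric limit of cuffs of the given decomposition, which requires careful control of how simple open geodesics on $\partial C(X)$ are approximated in $\mathbb{H}$ by translates of cuff lifts.
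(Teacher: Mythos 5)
Your converse direction matches the paper's (it invokes Proposition \ref{prop: straightening pants decompositions}/Corollary \ref{cor: geodesic pants decomposition} exactly as the paper does), and your forward direction takes a genuinely different route --- a developing map and holonomy representation rather than the paper's exhaustion of $X^{\prime}$ by finite-type geodesic subsurfaces $Y_n$ uniformized by a nested sequence of Fuchsian groups $\Gamma_n$ with convex hulls $C_n$. The problem is that the crucial step of your route is asserted, not proved. You claim that $\Gamma=\rho(\pi_1(X^{\prime}))$ is discrete ``because the developed pants have pairwise disjoint interiors.'' For an incomplete hyperbolic structure the developing map need not be injective and the holonomy need not be discrete or faithful; the disjointness of the developed lifts of the pants is precisely the nontrivial content here, and it is exactly what the paper outsources to the (infinite) combination theorem of \cite{Bas} when it forms $\Gamma=\langle\Gamma_n\rangle$ as an increasing union of Fuchsian groups. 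To complete your argument you would need an actual proof of disjointness, e.g.\ an induction over the tree of lifted pants in $\widetilde{X^{\prime}}$ using that each lifted cuff is a complete geodesic separating $\mathbb{H}$, together with a limiting argument for the infinite union (equivalently, a uniform lower bound on how far a nontrivial deck transformation moves a fixed developed pair of pants, independent of the stage of the exhaustion).

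A second gap, which you partially flag yourself, is the identification $\overline{X^{\prime}}=C(X)$. Your reverse inclusion rests on the claim that every point of $C(X)\setminus X^{\prime}$ lies on a boundary geodesic of a pants region that is either an axis or a limit of cuff lifts; as phrased this is slightly off (an unglued cuff, being a closed boundary geodesic of $X^{\prime}$, already lies in $X^{\prime}$, while the genuinely new boundary geodesics $L_i$ are not boundary components of any pants region but limits of cuffs with trivial stabilizer), and more importantly it presupposes that $X^{\prime}$ is dense in $C(X)$. What is really needed, and what the paper proves, is that the closure $\overline{C}$ of the developed image equals $CH(\Lambda(\Gamma))$: the paper deduces this from the $\Gamma$-invariance of $\overline{C}$ together with the minimality of the limit set as a nonempty closed $\Gamma$-invariant subset of $\partial\mathbb{H}$, which forces the ideal boundary of $\overline{C}$ to be exactly $\Lambda(\Gamma)$. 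Without some such argument you have not excluded that $CH(\Lambda(\Gamma))$ strictly exceeds the closure of the developed image, so the statement that the metric completion is the convex core is not yet established. (Also, uniqueness of the metric completion is a metric, not topological, fact --- a minor slip.) Your structural analysis of $X\setminus C(X)$ --- hyperbolic stabilizer giving a funnel, trivial stabilizer giving a closed half-plane bounded by a simple open geodesic that is a limit of cuffs --- agrees with the paper's conclusion and is fine once the two gaps above are filled.
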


\begin{proof}   The  pants decomposition   induces an exhaustion  by finite type  geodesic subsurfaces $\{Y_n\}$.  Observe that each such $Y_n$ being of finite type  has a completion by adding funnels to  boundary geodesics. Hence there exists a torsion-free Fuchsian group $\Gamma_n$ so that $\mathbb{H}/\Gamma_n$ is a complete hyperbolic surface with convex core $Y_n$.
Denote the convex hull of the limit set of $\Gamma_n$ by $C_n$. Since $\Gamma_n \leq
\Gamma_{n+1}$, we have $C_n \subseteq C_{n+1}$.  Next set $C:=\bigcup_{n} C_n$,
$\Gamma :=\lim_{n \rightarrow \infty} \Gamma_n=<\Gamma_n>$,  and note that 
$\Gamma$ is a torsion-free Fuchsian group (see \cite{Bas} for an  infinite version of the combination theorem). 
Since the $\{C_n\}$ are an increasing nested sequence of domains with geodesic boundary, it must be that as $n \rightarrow \infty$  either the geodesics on the boundary $\partial C_n$ go to infinity  or converge to a geodesic.  In the later case we include the possibility that a component  of $\partial C_n$ is a component of $\partial C_{n+k}$, for all $k \geq 0$.
 Thus 
$C$ is a convex subspace of $\mathbb{H}$ where,

\begin{enumerate}
\item  the boundary of $C$ is made up of complete geodesics; that is, 
$C=\mathbb{H}-\bigcup D_i$, where the $D_i$ are  open half-planes. 

\item the interior of $C$ is the universal cover of $X^{\prime}-\partial X^{\prime}$.

\item a boundary component of $C$ in $\mathbb{H}$ has   stabilizer  in $\Gamma$ that is either  generated by a hyperbolic element or is trivial.  In the later case, the boundary component (geodesic) is the limit of axes of simple hyperbolic elements in $\Gamma$.

\item $\Gamma$ keeps invariant $\overline{C}$, the closure of $C$ in $\mathbb{H}$.

\item $\Gamma$ does not keep invariant any set smaller than $\overline{C}$.

\item $CH(\Lambda (\Gamma))=\overline{C}$, and hence  $\overline{C}/\Gamma$ is the convex core of $\mathbb{H}/\Gamma$, and the metric completion of $X^{\prime}$.

\end{enumerate}

We can conclude that  $\mathbb{H}/\Gamma$
 is the geodesic completion of  $X^{\prime}$ obtained  by attaching closed half-planes and funnels. The  closed half-planes coming  from the half-planes $\{D_i\}$  in item (1). 

For the converse, suppose $X$  is a geodesically complete hyperbolic surface and set 
$X=\mathbb{H}/\Gamma$, where $\Gamma$ is a torsion-free discrete group. As a consequence of the fact that the boundary of $CH(\Gamma)$ in $\mathbb{H}$  is comprised of geodesics that are either axes of  hyperbolic elements   or have  trivial stabilizer, we can conclude that the boundary of $C(X)$ in $X$ is made-up of closed geodesics and (infinite) open geodesics. we denote the infinite open geodesics  by $\{L_i\}$.  Then consider $X^{\prime}=C(X)-\cup \{L_i\}$  and  note that we are retaining  the closed geodesics, if any,  on the boundary of $C(X)$. This  surface with possible boundary, where the boundary components are 
simple closed geodesics, admits  a topological pants decomposition, and  by proposition
\ref{prop: straightening pants decompositions} we straighten this pants decomposition to a geodesic pants decomposition. 
\end{proof}

For $\Gamma$ a non-elementary (that  is, not virtually abelian) Fuchsian group, the stabilizer of an  interval of discontinuity  for $\Gamma$  is non-trivial  if and only if the interval is bounded  by the axis of a  hyperbolic element in $\Gamma$. For trivial stabilizer we have the following characterization,

\begin{cor}  Let $\Gamma$ be a non-elementary  torsion-free Fuchsian group and $I$  an  interval of discontinuity   for $\Gamma$. Then $I$ has  trivial stabilizer in $\Gamma$ 
if and only if    there exists  a sequence of simple hyperbolic elements  in $\Gamma$ whose axes converge to the geodesic  bounding $I$.
\end{cor}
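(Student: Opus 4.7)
The plan is to deduce this corollary from Theorem~\ref{thm: completing a hyperbolic surface} together with the collar lemma, treating the two implications separately.

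For the forward implication I would unpack item (3) from the proof of Theorem~\ref{thm: completing a hyperbolic surface}. Let $L$ be the geodesic bounding $I$ and assume its $\Gamma$-stabilizer is trivial. In the proof of that theorem $L$ arises as a boundary component of $C = \bigcup_n C_n$, where each $C_n = CH(\Lambda(\Gamma_n))$ comes from an exhaustion of $C(X)-\bigcup L_i$ by finite type geodesic subsurfaces $Y_n = C_n/\Gamma_n$. Because the $\Gamma$-stabilizer of $L$ is trivial, $L$ cannot coincide with the boundary of any single $C_n$, so it must be the limit of boundary geodesics of the $C_n$. Each such boundary geodesic is the axis of a hyperbolic element in $\Gamma_n \subset \Gamma$ corresponding to a cuff of $Y_n$; and since these cuffs are boundary components of the geodesic subsurface $Y_n \subset X$, they are simple closed geodesics in $X$, so the approximating hyperbolic elements in $\Gamma$ are simple, as required.

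For the reverse implication I would argue by contrapositive using the collar lemma. Suppose the stabilizer of $I$ is nontrivial; then it is cyclic, generated by a primitive hyperbolic element $h \in \Gamma$ whose axis is $L$, and $L$ projects to a simple closed geodesic $\gamma$ in $X$. The collar lemma provides an embedded open tubular neighborhood $T_\gamma \subset X$ of definite width which is disjoint from every other simple closed geodesic of $X$; equivalently, on the universal cover there is a tubular neighborhood $T_L$ of $L$ with $g T_L \cap T_L = \emptyset$ for all $g \in \Gamma \setminus \langle h \rangle$. Now assume, toward a contradiction, a sequence $\{h_n\} \subset \Gamma$ of simple hyperbolic elements with axes $A_n \neq L$ satisfying $A_n \to L$, and let $\gamma_n$ be the simple closed geodesic in $X$ onto which $A_n$ projects. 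Passing to a subsequence splits into two cases: if $\gamma_n = \gamma$ for all $n$, then $A_n = g_n L$ with $g_n \notin \langle h \rangle$, and $A_n \to L$ forces $g_n T_L \cap T_L \neq \emptyset$ for large $n$, a contradiction; if instead $\gamma_n \neq \gamma$ for all $n$, then $A_n \to L$ descends to $\gamma_n \to \gamma$ in $X$, forcing $\gamma_n$ to enter the embedded collar $T_\gamma$, again a contradiction.

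The forward direction is essentially bookkeeping from the proof of Theorem~\ref{thm: completing a hyperbolic surface}, so the substantive work is in the reverse direction. The main obstacle there is the subcase where the approximating axes lie in the $\Gamma$-orbit of $L$ itself; ruling it out requires reading the collar of $\gamma$ on the universal cover and using embeddedness to conclude that distinct translates $g T_L$ are disjoint, so that no sequence of translates of $L$ can converge to $L$.
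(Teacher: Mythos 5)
Your forward direction is, up to unpacking, the paper's own argument: the paper simply cites Theorem~\ref{thm: completing a hyperbolic surface}, whose proof (item (3)) contains exactly the bookkeeping you describe, and your case (1) of the reverse direction is also sound. The gap is in case (2). The statement you attribute to the collar lemma --- that the standard collar $T_\gamma$ is disjoint from \emph{every} other simple closed geodesic of $X$ --- is false in general: any simple closed geodesic that crosses $\gamma$ transversely runs straight through $T_\gamma$. The collar lemma only gives embeddedness of $T_\gamma$ and disjointness of the collars of \emph{disjoint} simple closed geodesics. So the step ``$\gamma_n\to\gamma$ forces $\gamma_n$ to enter $T_\gamma$, a contradiction'' is unjustified until you rule out that $\gamma_n$ meets $\gamma$. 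The missing observation is that in this situation $\gamma$ is a boundary component of the convex core: since the stabilizer of $I$ is $\langle h\rangle$, the half-plane on the $I$-side of $L$ projects to a funnel with core boundary $\gamma$, and every closed geodesic of $X$ lies in $C(X)$; hence a closed geodesic distinct from $\gamma$ cannot cross $\gamma$ (distinct geodesics meet only transversely, which would push $\gamma_n$ into the funnel). Once $\gamma_n\cap\gamma=\emptyset$ is established, the disjoint-collars form of the collar lemma does keep $\gamma_n$ out of $T_\gamma$ and your argument closes; without it, case (2) fails as written.

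For comparison, the paper's reverse direction avoids both the case split and the collar lemma. Assuming $L$ is the axis of $h\in\Gamma$, it notes that the approximating axes $L_n\neq L$ must approach $L$ with both endpoints on the non-$I$ side (their endpoints are limit points, and $I$ is an interval of discontinuity), and then for large $n$ the translate $hL_n$ has endpoints linked with those of $L_n$, so $hL_n$ crosses $L_n$; two crossing lifts of the same closed geodesic contradict the simplicity of the approximating elements. This treats your cases (1) and (2) uniformly, and the one-sided approach forced by $I$ being an interval of discontinuity is precisely what makes the linking argument work --- the same structural fact you would need to import to repair case (2) of your collar argument.
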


\begin{proof}  Denote by $L$, the geodesic bounding $I$.  If $I$  has trivial stabilizer then Theorem  \ref{thm: completing a hyperbolic surface} implies that there must  exist  a sequence of simple hyperbolic elements whose axes converge  to $L$.  Conversely,  suppose simple hyperbolic elements have axes  $\{L_{n} \}$ converging  to  $L$.  If $L$ were the axis of a hyperbolic element  $\gamma \in \Gamma$ then there would be 
$\gamma$-translates of  $L_n$ that transversely intersect $\{L_n\}$ for large $n$.  This violates the assumption that the $\{L_n\}$ are axes of simple hyperbolic elements. Thus the stabilizer of $L$, and hence $I$, is trivial.  
\end{proof}

Let $X^{\prime}$ be a (not necessarily complete) hyperbolic surface with a pants decomposition  and note that  $X^{\prime} \subset C(X) \subset X$. We have,

\begin{cor}\label{cor: simple boundary components}
{\rm (Visible ends and boundary components)}  Let $X$ be a geodesically complete hyperbolic surface. Then a  boundary component of  $C(X)$ is  either

\begin{enumerate}
\item    a  simple closed geodesic    that   bounds a funnel in $X$  and  corresponds   to an isolated end that is visible or 

\item a  simple geodesic isometric to the real line  that   bounds a half-plane in $X$ and  corresponds to a  component of a visible end of infinite type. 

\end{enumerate}

\end{cor}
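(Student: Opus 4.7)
The plan is to derive the corollary directly from Theorem \ref{thm: completing a hyperbolic surface} and the bijection $B: \mathcal{VE}_X \rightarrow \partial C(X)/\sim$ established in Section \ref{subsection: Geometry of ends}. Theorem \ref{thm: completing a hyperbolic surface} already gives us the dichotomy for the boundary components of $C(X)$: they are either simple closed geodesics (bounding funnels) or simple geodesics isometric to $\mathbb{R}$ (bounding half-planes). So the entire content of the corollary is in matching these two geometric types of boundary components with the corresponding types of visible ends.

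For case (1), suppose $b \subset \partial C(X)$ is a simple closed geodesic. By Theorem \ref{thm: completing a hyperbolic surface} the component of $X \setminus C(X)$ meeting $b$ is a funnel $F_b$. Any geodesic ray crossing $b$ into $F_b$ leaves $C(X)$ in finite time and tends to the unique end $e$ determined by $F_b$; by Lemma \ref{lem: visible ends} this makes $e$ visible, and $B(e) = \{b\}$. To see that $e$ is isolated in $\mathcal{E}_X$, choose a pair of pants in the pants decomposition of $C(X)$ one of whose cuffs is $b$; together with $F_b$ it produces a neighborhood of $e$ in $X$ whose boundary (inside $X$) is compact and disjoint from every other end. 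Hence $\{b\}$ is a singleton equivalence class and $e$ is an isolated visible end.

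For case (2), let $b$ be a simple boundary component of $C(X)$ isometric to $\mathbb{R}$. By Theorem \ref{thm: completing a hyperbolic surface} it bounds an open half-plane $H_b$, and geodesic rays crossing $b$ transversally into $H_b$ exit $C(X)$ in finite time, so by Lemma \ref{lem: visible ends} they exit a visible end $e$ with $b \in B(e)$. The key point that remains is that $e$ has infinite topological type. Here we invoke the preceding corollary: because the stabilizer of the corresponding interval of discontinuity is trivial, $b$ is the limit of axes of simple hyperbolic elements of $\Gamma$, which project to a sequence of simple closed geodesics $\{c_n\}$ in $C(X)$ accumulating on $b$. Any neighborhood of $e$ in $X$ must contain all but finitely many of the $c_n$, so $e$ cannot admit a neighborhood homeomorphic to a once-punctured disc; equivalently, $e$ is not isolated in $\mathcal{E}_X$, so $e$ is of infinite topological type.

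The main obstacle I anticipate is the infinite-type conclusion in case (2): one has to rule out the possibility that an open geodesic boundary component of $C(X)$ sits inside the same visible end as some isolated finite-type structure. The argument above resolves this by using the preceding corollary to produce simple closed geodesics accumulating on $b$ inside $C(X)$, which forces any neighborhood of the associated end to contain infinitely many non-homotopic simple closed geodesics, a feature incompatible with being an isolated planar end.
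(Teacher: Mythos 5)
Your overall framing---deducing the statement from Theorem \ref{thm: completing a hyperbolic surface}, Lemma \ref{lem: visible ends} and the bijection $B$---is exactly the intended route, and your case (1) is fine. The gap is in case (2), in the step ``any neighborhood of $e$ in $X$ must contain all but finitely many of the $c_n$.'' This is false; essentially the opposite holds. Convergence of the axes $\tilde c_n$ to $\tilde b$ (endpoints converging) gives uniform convergence on compact segments of $\tilde b$, so if $p$ is any fixed point of $b$, then for all large $n$ the closed geodesic $c_n$ passes within distance $1$ of $p$; hence all but finitely many $c_n$ meet the fixed compact ball $\bar B(p,1)\subset C(X)$. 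Any end-neighborhood of $e$ disjoint from this ball (such neighborhoods exist, since end-neighborhoods can be taken in the complement of any compact set) therefore contains \emph{none} of the $c_n$ entirely for large $n$. The curves $c_n$ fellow-travel longer and longer arcs of $b$ but always return to a fixed compact part of the surface, so they do not exit $e$ and give no obstruction to $e$ being isolated. Thus your argument does not establish that $e$ is of infinite topological type, which is precisely the point you correctly identified as the main obstacle.

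The way to close the gap is the one the paper relies on from Section \ref{subsection: Geometry of ends}: if $e$ were of finite topological type, then as an isolated planar end of a geodesically complete hyperbolic surface it would be either a cusp end or a funnel end. A cusp end is not visible (a ray from $C(X)$ exiting a cusp stays in $C(X)$, since its lift ends at a parabolic fixed point, which lies in the limit set), so $e$ would have to be a funnel end; but then sufficiently small neighborhoods of $e$ lie inside the funnel, which is a component of $X\setminus C(X)$ whose boundary is a simple \emph{closed} geodesic and which is therefore disjoint from $\operatorname{int}(H_b)$. This contradicts the fact that a geodesic ray crossing $b$ into the convex half-plane $H_b$ stays in $H_b$ forever (its lift can meet $\tilde b$ at most once) while exiting $e$, so tails of that ray must lie in every small neighborhood of $e$. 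Hence $e$ is of infinite type. In particular, the appeal to the corollary on trivial stabilizers and the accumulating simple closed geodesics is not needed and, as used, does not deliver the conclusion.
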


 With the aid of this corollary, the following proposition  characterizes the geometry of infinite type ends.

\begin{prop}\label{prop: characterization of geometric ends}
Let $X^{\prime}$ be a (not necessarily complete)  hyperbolic surface  with a pants decomposition and $X$ its geodesic completion. 
\begin{enumerate}

\item  The closure of $X^{\prime}$ in $X$ is $C(X)$, and hence $X$  is of the first kind if and only if $X=X^{\prime}$.

\item  Suppose  $e$ is   an infinite type end of $X$. 
Then $e$ is  not visible    if and only if  for any geodesic ray  $\gamma$
 that exits $e$ the  sequence  of pants in the decomposition of 
$X^{\prime}$ that $\gamma$ intersects   also exit  $e$.

\item $X$ is of the first kind if and only if  each end of $X$ is not visible.

\end{enumerate}

\end{prop}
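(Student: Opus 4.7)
The three parts follow by combining Lemma \ref{lem: visible ends}, Theorem \ref{thm: completing a hyperbolic surface}, and Corollary \ref{cor: simple boundary components}, so the strategy is to apply each of these in turn.

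For (1) I would invoke the construction of $X$ from $X'$ given in the proof of Theorem \ref{thm: completing a hyperbolic surface}: $X'$ is identified with $C(X)$ with the open simple boundary geodesics $\{L_i\}$ removed, and then funnels and half-planes are attached to produce $X$. By the corollary immediately preceding Corollary \ref{cor: simple boundary components}, each $L_i$ is a limit of axes of simple hyperbolic elements of $\Gamma$; those axes project to cuffs of pants in $X'$, so $L_i\subset \overline{X'}$ for every $i$, giving $\overline{X'}=C(X)$. The biconditional then reduces to the chain: $X$ is of the first kind $\iff C(X)=X \iff$ there are no funnels, half-planes, or $L_i \iff X=X'$.

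For (2) I would use Lemma \ref{lem: visible ends}(3) in both directions. Suppose $e$ is non-visible and let $\gamma$ be any ray exiting $e$. Lifting to $\mathbb{H}$, the forward ideal endpoint of $\tilde\gamma$ lies in $\Lambda(\Gamma)$ or in $\Omega(\Gamma)$; if it lay in $\Omega(\Gamma)$ then $\tilde\gamma$ would eventually leave $CH(\Lambda(\Gamma))$, so $\gamma$ would leave $C(X)$ in finite time and $e$ would be visible by Lemma \ref{lem: visible ends}(3), contradicting the hypothesis. Hence the endpoint is in $\Lambda(\Gamma)$, and by convexity $\tilde\gamma$ eventually enters $CH(\Lambda(\Gamma))$ and remains there, so $\gamma$ eventually enters and stays in $C(X)$. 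Because $\gamma$ exits $e$ it leaves every compact subset of $X$, and by local finiteness of the pants decomposition the sequence of pants crossed by $\gamma$ is infinite and eventually contained in any representative neighborhood of $e$; thus this sequence exits $e$. Conversely, if $e$ is visible, Lemma \ref{lem: visible ends}(3) supplies a ray $\gamma$ exiting $e$ that leaves $C(X)$ in finite time; since every pair of pants is contained in $C(X)$, such a $\gamma$ meets only finitely many pants, and a finite collection cannot exit $e$, so the hypothesis fails.

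For (3) I would read the conclusion off Corollary \ref{cor: simple boundary components}: if $X$ is of the first kind then $C(X)=X$, so by Lemma \ref{lem: visible ends}(3) no ray can leave $C(X)$ and hence no end is visible; conversely, if every end is non-visible then Corollary \ref{cor: simple boundary components} forces $\partial C(X)=\emptyset$, so by connectedness $C(X)=X$ and $X$ is of the first kind. The main obstacle lies in the forward direction of (2): ruling out a ray that weaves repeatedly between $C(X)$ and its complement. The convexity and $\Gamma$-invariance of $CH(\Lambda(\Gamma))$ handle this cleanly, since once the lift $\tilde\gamma$ enters the convex hull it stays inside, and hence $\gamma$ cannot leave $C(X)$ after entering.
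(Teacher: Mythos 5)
Your proposal is correct and takes essentially the same route as the paper's proof: item (1) from the completion theorem (the closure of $X'$ in $X$ is $C(X)$), item (2) from Lemma \ref{lem: visible ends} — a ray exiting a non-visible end must remain in the convex core and hence crosses a sequence of pants exiting the end, while a visible infinite-type end has a ray escaping into a half-plane and so meeting only finitely many pants — and item (3) from the correspondence between boundary components of $C(X)$ and visible ends. The only minor variation is that for the direction ``all ends non-visible $\Rightarrow$ first kind'' you apply Corollary \ref{cor: simple boundary components} contrapositively (no visible ends forces $\partial C(X)=\emptyset$), whereas the paper argues directly by projecting a ray limiting into an interval of discontinuity; both arguments rest on Theorem \ref{thm: completing a hyperbolic surface}, so this is a shortcut rather than a different method.
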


\begin{proof}   Item (1) follows from Theorem \ref{thm: completing a hyperbolic surface}.

To prove item (2), suppose $e$ is an infinite type end   for $X$ that is not visible. Hence 
by Lemma \ref{lem: visible ends} any  geodesic ray  $\gamma$ that exits $e$ and starts in $C(X)$ must  stay in   $C(X)$. Since  the closure in $X$  of $X^{\prime}$ is $C(X)$ the  geodesic ray  $\gamma$ must pass through infinitely many pair of pants in the decomposition of $X^{\prime}$ that also exit $e$.   On the other hand, if $e$ is a visible end  then, again by  Lemma \ref{lem: visible ends},  there exists a geodesic ray that exits $e$ and leaves the convex core $C(X)$ in finite time. Since this end is of infinite type it must be that this geodesic ray  intersects $\partial C(X)$ in a simple open geodesic.  Then by Theorem  \ref{thm: completing a hyperbolic surface} this ray enters into a half plane embedded in $X$ and hence the infinitely many pairs of pants from 
$X^{\prime}$ that the ray intersects  do not exit the end $e$.  This proves  item (2).

To prove item (3), suppose $X=\mathbb{H}/\Gamma$ is of the second kind and let $I$  be an interval of discontinuity for $\gamma$.  Let $\beta \subset  \mathbb{H}$ be a geodesic ray that limits to a point in $I$. 
 Project to  $X$  the half-plane determined by this interval and the geodesic ray $\gamma$, and note that the projected ray determines an end $e$ for which it exits. 
  If the stabilizer of $I$  is infinite cyclic then the end  $e$ corresponds to  a funnel and hence is visual. If the stabilizer  of $I$ is trivial then  Theorem \ref{thm: completing a hyperbolic surface}   implies again that the end is visible.  To prove the  other direction, suppose $X$ has a visible  end. Then  there exists a geodesic ray that  leaves $C(X)$ in finite time. Hence by Corollary \ref{cor: simple boundary components} this  can only mean that the geodesic ray  intersects the boundary of $C(X)$ and  enters into a half-plane which it can not escape. Thus $X \ne C(X)$ and  we conclude that $X$ is of the second kind.
\end{proof}

We have shown that a visible infinite type end (of a geodesically complete hyperbolic surface)  has an equivalance class of  components  of the convex core boundary being simple open geodesics  with  attached half-planes. 
Table  \ref{Ends: Geometric  type vs. topological type} summarizes  the relationship between   the topology and geometry of an end.

 \vskip10pt

 \begin{table}[htdp]
\caption{Ends: Geometry  and Topology.}
\begin{center}

 \begin{tabular}{| c | c | c  | c |}
\hline\hline 
 Top. vs. Geom. end.  & Not visible (complete) & Visible (boundary components)\\
 \hline
   finite type& cusp & simple closed geodesic\\
   \hline 
   infinite type&rays  that exit  end stay in $C(X)$& equiv. class of simple open geodesics\\
\hline
\end{tabular} 
\end{center}
\label{Ends: Geometric type vs. topological type}
\end{table}
 
 
 \vskip10pt

Even though a surface  can have an uncountable number of ends, the hyperbolic metric  places restrictions on the  geometry  of the ends. Namely, the fact that a Fuchsian group has   only a countable number of  intervals of discontinuity implies  that a complete hyperbolic surface  has  at most a countable set of ends  that are visible. In fact,  in 
 section \ref{section: Visible  ends  with  equivalent boundary components.}
we supply examples to show that  a visible end can correspond to 
$n$ equivalent boundary components of $C(X)$ for any $n=1,2,3,...$ including $n$ being the cardinality of the integers.


\section{Examples: Visible  ends  with  equivalent  components.} 
\label{section: Visible  ends  with  equivalent boundary components.}

Recall that the {\it components}  of a visible  end are the boundary components of the convex core that  correspond  to the end. That is, these components bound  half-planes that correspond to the end.    In this section we construct  examples  to show that a visible end  can have   countable or any  finite number of components.  These examples first appeared  in the paper \cite{AR}.
Our examples are elementary;  namely for each 
$n \in \mathbb{N} \cup \{\infty \}$ we construct a   flute surface with the unique  infinite  type end being a visible end with $n$ components.  

Let $A$   be a countable set of points in the unit disc $\Delta$ that accumulate  to the set 
$K$ on the boundary $\partial \Delta$; the set $K$ is closed in  $\partial \Delta$. We are interested in the plane domain $X=\Delta - A$ with its  unique complete hyperbolic structure   compatible with the complex structure. $X$ is a tight flute surface and we denote its  unique  infinite type end by $e$. We assume that    
$\partial \Delta -K  \neq \emptyset$ and hence   
$\partial \Delta -K$ is the union of  at most countably many open intervals $\{J_i\}$ on the boundary of the unit disc.    Set  $\widehat{X}=X \cup \{J_i\}$  and suppose $\Gamma$ is the torsion-free Fuchsian group so that $\mathbb{H} /\Gamma$ is conformally equivalent to $X$. We denote the covering map by $f: \mathbb{H} \rightarrow  \mathbb{H} /\Gamma$.

\begin{rem} \label{rem:stabilizer trivial}
Since  the plane domain $X$ does not contain a simple homotopy class which bounds an annulus  (in fact, only punctures and discs), we can conclude that  the stabilizer of any interval of discontinuity   is trivial.  Or equivalently,   $\Gamma$ moves any half-plane bounding an interval of discontinuity  disjointly away from itself.  
\end{rem}

\begin{lem} \label{lem:accumulation}
 If  $\{z_n\}$ is a sequence in $\mathbb{H}$ that accumulates to an interval of discontinuity, then $\{f(z_n)\}$ must accumulate to $\partial \Delta$.
\end{lem}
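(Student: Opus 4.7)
The plan is to argue by contradiction using compactness and the fundamental dichotomy between the limit set and the ordinary set of $\Gamma$. After passing to a subsequence we may assume $z_n \to \xi$ with $\xi$ lying in some interval of discontinuity $I \subset \Omega(\Gamma)$; by compactness of $\overline{\Delta}$ we may further pass to a subsequence so that $f(z_n) \to w \in \overline{\Delta}$. The goal is to rule out the possibility $w \in \Delta$, which we do by splitting into the two subcases $w \in X = \Delta - A$ and $w \in A$.

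Suppose first that $w \in X$. Choose an evenly covered neighborhood $U$ of $w$ in $X$ and a single lift $U' \ni w_0$ in $\mathbb{H}$, so that $f^{-1}(U) = \bigsqcup_{\gamma \in \Gamma} \gamma(U')$. For all large $n$, $z_n$ lies in some $\Gamma$-translate $\gamma_n(U')$; since $\Gamma$ acts by isometries, $\gamma_n(w_0)$ stays within bounded hyperbolic distance $\mathrm{diam}\,U'$ of $z_n$. Because $z_n \to \xi \in \partial\mathbb{H}$, the $\gamma_n$ are eventually distinct and $\gamma_n(w_0) \to \xi$ as well. Thus $\xi$ is an accumulation point of the $\Gamma$-orbit of $w_0$, hence $\xi \in \Lambda(\Gamma)$, contradicting $\xi \in I \subset \Omega(\Gamma)$.

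If instead $w \in A$, then $w$ is a puncture of $X$, and the complete hyperbolic metric on $\Delta - A$ realizes a deleted neighborhood of $w$ as an honest cusp. Its preimage under $f$ is a disjoint $\Gamma$-invariant collection of horoballs based at parabolic fixed points of $\Gamma$. For large $n$, $z_n$ lies in one of these horoballs; either the horoball index stabilizes (and $\xi$ is its parabolic base point) or it does not, in which case the base points of the horoballs containing $z_n$ accumulate at $\xi$. Either way, $\xi$ is a limit of parabolic fixed points of $\Gamma$, hence $\xi \in \Lambda(\Gamma)$, again a contradiction.

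The main obstacle, minor though it is, lies in the puncture subcase: one needs to know that every $a \in A$ is genuinely a cusp of the uniformization (so that the lifts of a neighborhood are horoballs based on $\Lambda(\Gamma)$), which is standard for the complete hyperbolic structure on a plane domain. With this in hand, the entire argument is a routine orbit-accumulation argument. Equivalently, and more quickly, one can invoke the continuous extension $\bar{f} : \mathbb{H} \cup \Omega(\Gamma) \to \widehat{X}$ of the uniformization which, by Remark 4.1 (trivial stabilizer of every interval of discontinuity), sends each interval of discontinuity homeomorphically onto one of the boundary arcs $J_i \subset \partial\Delta$; continuity then yields $f(z_n) \to \bar{f}(\xi) \in J_i \subset \partial\Delta$ in one line.
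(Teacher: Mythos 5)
Your main argument is correct and is essentially the same in spirit as the paper's: both proofs reduce everything to the dichotomy between $\Lambda(\Gamma)$ and $\Omega(\Gamma)$, and both dispose of the possibility of accumulation at a point of $A$ by observing that punctures lift to horoballs based at parabolic fixed points, which lie in the (closed) limit set and hence cannot be accumulation points of a sequence tending into an interval of discontinuity. The one step where you genuinely diverge is the interior case: the paper first shows that $\{f(z_n)\}$ leaves every compact subset of $X$ by using the triviality of the stabilizer of the interval (Remark \ref{rem:stabilizer trivial}), i.e.\ the injectivity of $f$ on the half-plane component of the complement of the convex hull, whereas you rule out a limit $w\in X$ directly by the evenly-covered-neighborhood orbit argument, which makes no use of Remark \ref{rem:stabilizer trivial} at all; this is a perfectly sound and arguably more self-contained route. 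In the puncture case your assertion that the base points of the horoballs containing $z_n$ accumulate at $\xi$ deserves one sentence (pairwise disjoint horoballs in $\mathbb{H}$ have at most finitely many members exceeding any given Euclidean size, so either the horoball is eventually fixed and its closure meets $\partial\mathbb{H}$ only at its base point, or the diameters shrink), but this is routine. The one thing you should not rely on is your closing ``equivalently, and more quickly'' remark: invoking the continuous extension $F:\mathbb{H}\cup\Omega(\Gamma)\to\widehat{X}$ is circular in the logical order of the paper, since that extension (Proposition \ref{thm: extension one to one}) is itself obtained from the present lemma via the reflection principle; so it is an acceptable afterthought only once the proposition is proved, not an alternative proof of the lemma.
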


\begin{proof} We first note that  since the $\{z_n\}$ eventually enter into the complement of the convex hull of $\Gamma$ and $f$ restricted to any  component of the complement is injective (remark  \ref{rem:stabilizer trivial}),  it must be that  $\{f(z_n)\}$ must leave every compact subset of $X$.  Then the only way  $\{f(z_n)\}$ does not accumulate to  $\partial \Delta$ is if there is  a subsequence of $\{f(z_n)\}$  which converge to one of the points of $A$. But since the points of $A$ are punctures, there would have to be  a subsequence of the  
$\{z_n\}$ that limit to a parabolic fixed point.  Since a parabolic fixed point cannot be contained in an interval of discontinuity we have a contradiction.  
\end{proof}

\begin{prop}\label{thm: extension one to one}
 The covering map $f: \mathbb{H}  \rightarrow X$ analytically extends to the set of discontinuity of $\Gamma$ so that the extension  $F$ maps an interval of discontinuity to one of the $\{J_i\}$. Moreover, 
$F:  \mathbb{H} \cup \Omega (\Gamma) \rightarrow   \widehat{X}$ satisfies,

\begin{enumerate}
\item $F$  restricted to any  interval of discontinuity is injective,  

\item  $F$ establishes a one to one correspondence between $\Gamma$-equivalence  classes of  intervals of discontinuity and the intervals  $\{J_i\}$.
  
 \end{enumerate}
  
\end{prop}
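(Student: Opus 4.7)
The plan is to construct $F$ on a single interval of discontinuity $I$ by means of the Schwarz reflection principle and then paste these local extensions together, exploiting the trivial stabilizer property (Remark \ref{rem:stabilizer trivial}) and the boundary behavior of $f$ (Lemma \ref{lem:accumulation}).

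First, I would fix an interval of discontinuity $I\subset\Omega(\Gamma)$ and let $L$ be the geodesic in $\mathbb{H}$ joining the endpoints of $I$. Denote by $H_I$ the open half-plane in $\mathbb{H}$ bounded by $L$ and having $I$ as its ideal boundary arc. Because the stabilizer of $I$ is trivial, every nontrivial $\Gamma$-translate of $H_I$ is disjoint from $H_I$, so $f|_{H_I}$ is a conformal embedding of $H_I$ into $X\subset\Delta$. I would then analyze the cluster set of $f$ at each point $z\in I$. By Lemma \ref{lem:accumulation}, any sequence $z_n\in H_I$ with $z_n\to z$ has $\{f(z_n)\}$ accumulating on $\partial\Delta$, so the cluster set lies in $\partial\Delta$. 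The key technical step is to rule out that the cluster set meets $K$: points of $K$ are accumulation points of punctures of $X$, and cusped neighborhoods of these punctures lift to horocyclic neighborhoods of parabolic fixed points, which lie in $\Lambda(\Gamma)$. Since $I\subset\Omega(\Gamma)$ is disjoint from $\Lambda(\Gamma)$ and open, a sequence $z_n\to z\in I$ cannot eventually enter any such horocyclic neighborhood, so $f(z_n)\not\to K$. Hence the cluster set lies in $\partial\Delta-K=\bigcup J_i$; by connectedness of $f$ applied to a neighborhood basis of $z$ in $H_I$, the cluster set sits inside a single arc $J_{i(I)}$.

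Once the cluster set is identified as a subset of the real analytic arc $J_{i(I)}$, I would apply the Schwarz reflection principle. Reflecting $\mathbb{H}$ across $I$ yields a neighborhood of $I$ in $\mathbb{C}$, and reflecting $\Delta$ across $J_{i(I)}$ yields a neighborhood of $J_{i(I)}$ in $\hat{\mathbb{C}}$; the bounded injective holomorphic map $f|_{H_I}$ extends by reflection to a holomorphic injection on a genuine open neighborhood of $H_I\cup I$. Restricting this extension to $H_I\cup I$ defines a continuous, injective map $F_I\colon H_I\cup I\to\Delta\cup J_{i(I)}$ with $F_I(I)\subset J_{i(I)}$. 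Performing this on every component of $\Omega(\Gamma)$ and combining with $f$ on $\mathbb{H}$ yields the desired $F\colon\mathbb{H}\cup\Omega(\Gamma)\to\widehat X$ satisfying assertion (1).

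For (2), the $\Gamma$-action on $\mathbb{H}\cup\Omega(\Gamma)$ factors through $F$ since $F$ is a $\Gamma$-invariant map to $\widehat X$; hence $\Gamma$-equivalent intervals are sent into the same $J_i$. Conversely, if $F(I_1)$ and $F(I_2)$ meet a common arc $J_i$, I would pick a point $p$ in each image, connect small neighborhoods of $p$ in $\widehat X$ by an arc crossing $J_i$, and lift this arc to $\mathbb{H}\cup\Omega(\Gamma)$; the lift produces a $\Gamma$-translate sending $I_1$ to $I_2$. Surjectivity onto $\{J_i\}$: given $J_i$, approach $J_i$ from $X$ by a curve $\gamma$ and lift to $\mathbb{H}$; by the same parabolic-fixed-point analysis, the endpoint of the lift lies in $\Omega(\Gamma)$, hence in some $I$ with $F(I)\cap J_i\neq\emptyset$, and therefore $F(I)\subset J_i$.

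The main obstacle will be the cluster-set analysis in the first paragraph, namely excluding $K$ from the accumulation set of $f$ on $I$. Once that is established the Schwarz reflection step is routine and the bijection in (2) follows from standard covering-space path lifting, but without the separation between $\Lambda(\Gamma)$ (where parabolic fixed points and their accumulations live) and $\Omega(\Gamma)$ one cannot even make sense of $F(I)$ lying in a single arc $J_{i(I)}$.
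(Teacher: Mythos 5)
Your overall architecture (Lemma \ref{lem:accumulation} plus the trivial-stabilizer remark plus Schwarz reflection, then a covering-space argument for the correspondence) is the same as the paper's, but the step you yourself single out as the crux --- excluding $K$ from the cluster set of $f$ along an interval of discontinuity $I$ --- is argued incorrectly, and as you have ordered the proof everything downstream depends on it. The claim ``if $f(z_n)\to k\in K$ then the $f(z_n)$ eventually enter cusped neighborhoods of punctures, whose lifts are horoball neighborhoods of parabolic fixed points in $\Lambda(\Gamma)$'' is a non sequitur: points of $K$ are merely accumulation points of the puncture set $A$, and a sequence in $X=\Delta-A$ can converge to $k\in K$ while staying outside every cusp neighborhood of every individual puncture (threading between the punctures, say), so nothing forces the lifts $z_n$ near a parabolic fixed point. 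Lemma \ref{lem:accumulation} only yields that the cluster set lies on $\partial\Delta$, not that it avoids $K$; since your reflection step is performed across the single analytic arc $J_{i(I)}$, it cannot even be set up until the cluster set is known to lie in one $J_i$, so the proof stalls exactly there.

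The repair is to reverse the order, which is in effect what the paper does: reflect first across the whole circle $\partial\Delta$ --- the only input needed is $|f(z)|\to 1$ as $z\to I$, i.e.\ Lemma \ref{lem:accumulation} --- to get an analytic extension $F$ across $I$ with $F(I)\subset\partial\Delta$. Then exclude $K$ a posteriori by openness: if $F(x)\in K$ for some $x\in I$, the image of a small disc about $x$ is an open neighborhood of $F(x)$ and so must contain points of $A$; but that image consists only of points of $X=\Delta-A$, of $\partial\Delta$, and of the exterior of $\overline{\Delta}$, a contradiction. Connectedness then puts $F(I)$ in a single $J_i$, and your injectivity argument for item (1) goes through as in the paper. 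For item (2), note also that your lift of ``an arc crossing $J_i$'' is not available: such an arc leaves $X$, so it cannot be lifted through $f$, and your surjectivity sketch leans on the same flawed parabolic analysis. The paper instead bounds a simply connected region $X_J\subset X$ by $J$ and a curve $\beta$, inverts $f$ on a component $Y$ of $f^{-1}(X_J)$, extends the inverse by reflection, and uses the identity theorem to conclude that $F$ maps an interval of discontinuity onto $J$; since $\Gamma$ permutes the components of $f^{-1}(X_J)$ transitively, this yields the one-to-one correspondence. Adopting that argument (or an equivalent correctly justified lifting argument staying inside $X$) closes the gap in your part (2).
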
 

\begin{proof}  Using Lemma \ref{lem:accumulation}, we can apply the reflection principle 
(see \cite{Ah})  to conclude that  $f: \mathbb{H}  \rightarrow X$  has an analytic extension  which maps an interval of discontinuity to one of the intervals $\{J_i\}$.
We denote the extension by $F:  \mathbb{H} \cup \Omega (\Gamma) \rightarrow   \widehat{X}$.  Furthermore, since $f$ restricted to any half-plane $Y$ that bounds an interval of discontinuity $I$ is injective  (remark \ref{rem:stabilizer trivial}) and since $F$ is an analytic extension,  it must be that $F$ is injective  on $I$. This verifies item (1).

So far we have shown that $F$ injectively maps each interval of discontinuity into one of the  $\{J_i\}$. To verify item (2), we need to show that this map is onto one of the $\{J_i\}$, and that each $\{J_i\}$ occurs as the image of an interval of discontinuity. To this end,
suppose  $J$ is  one of the open intervals in  $\{J_i\}$ and let $\beta$ be a  simple differentiable curve in $X$  with endpoints being the endpoints of $J$. Denoting $J$ with its endpoints as $\bar{J}$ we have that  $\bar{J} \cup\beta$ is the boundary of a simply connected region $X_J\subset X$ (cf. Figure \ref{figure:plane domain}). The set 
 $f^{-1}(X_J)$ has countably many simply connected components and the restriction of $f$ to each component is  a one-to-one conformal map by the simple connectedness. Fix one such component $Y$ and define $\phi :X_J\to Y$ by $\phi =(f|_{X_J})^{-1}$.
 Now let $\{z_n\} \in X_J$ such that 
 $\{z_n\}$ accumulate to $J$ as $n\to\infty$. Then,  since 
 $f \circ \phi =\text{id}|_{X_J}$, it must be that
 $\{\phi (z_n)\} \in Y$ accumulate on  $\partial \mathbb{H}$.
The reflection principle applies to $\phi$ and hence  there exists an analytic extension 
$\Phi: X_J \cup J  \to\mathbb{H} \cup \partial \mathbb{H}$ of $\phi$. Since $\Phi$ is an analytic extension,  
$J$ is mapped onto the interior of an arc $I'$ of $\partial \mathbb{H}$.   Furthermore since 
$\phi$ is injective it must be that $\Phi$ is injective on $J$, and hence $I'$ is contained in an interval of discontinuity which we  call $I$.   Since by the identity theorem, $\Phi^{-1}=F$, for all $z \in  Y\cup I'$, we may conclude that $I'=I$ and thus $F$ maps $I$ onto $J$.  
  Since the argument above is natural with respect to the action of $\Gamma$  we have shown that each $\{J_i\}$ arises as the image of a $\Gamma$-equivalence class  of  intervals  of discontinuity.
  
 We may conclude  that there is a one to one correspondence between  $\Gamma$-equivalence classes of intervals of discontinuity  and the intervals  $\{J_i\}$.
 \end{proof}

\begin{rem} The proof of theorem  \ref{thm: extension one to one} shows that  the curves 
$\beta$ in $X$   which bound the intervals  $\{J_i\}$  can be chosen to be the boundary components of the convex core.   See figure \ref{figure:plane domain} for an illustration. 

\end{rem}

  
  \begin{figure}[h]
\leavevmode \SetLabels
\L(.35*.7) $\gamma$\\
\L(.37*.88) $I$\\
\L(.5*.9) $\varphi$\\
\L(.65*.8) $X_I$\\
\L(.65*.95) $J$\\
\L(.1*.5) $\mathbb{H}=$\\
\L(.85*.5) $=X$\\
\endSetLabels
\begin{center}
\AffixLabels{\centerline{\epsfig{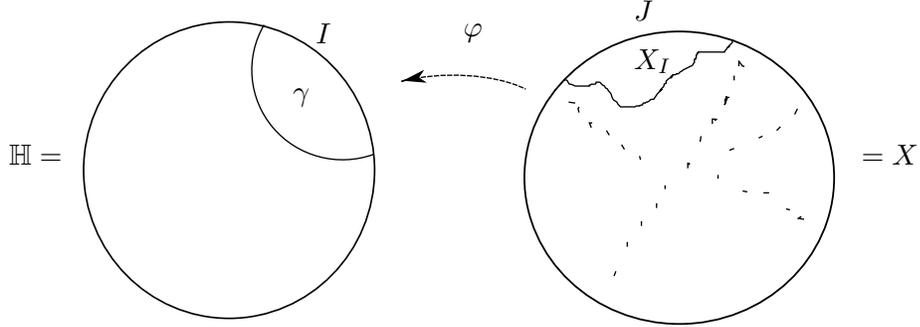} }}
\vspace{-30pt}
\end{center}
\caption{Plane domain.} 
\label{figure:plane domain}
\end{figure}




Note that  the plane domain $X$  has exactly one infinite type end and in fact is the geodesic completion of a tight flute surface (see section \ref{sec: geodesically complete} for the definition.  By  
Proposition  \ref{thm: extension one to one} the infinite type  end  is visible and has exactly $n$ components since the components correspond exactly to the intervals $\{J_i\}$. We have established,

\begin{thm} \label{cor: equivalent components} For any $n \in \mathbb{N} \cup \{\infty \}$ there exists a tight flute surface whose unique infinite type end is a visible end with $n$ components. 
\end{thm}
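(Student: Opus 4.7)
The plan is to construct, for each $n\in\mathbb{N}\cup\{\infty\}$, a plane domain $X_n=\Delta\setminus A_n$ where $A_n$ is a countable discrete subset of $\Delta$ whose set of accumulation points in $\overline{\Delta}$ is a closed set $K_n\subset\partial\Delta$ with $\partial\Delta\setminus K_n$ having exactly $n$ connected components. The conclusion will then follow from Proposition~\ref{thm: extension one to one}, which gives a bijection between $\Gamma$-equivalence classes of intervals of discontinuity and the components $\{J_i\}$ of $\partial\Delta\setminus K_n$, together with Corollary~\ref{cor: simple boundary components}, which identifies the projection of each such class as a simple open geodesic on $\partial C(X_n)$ bounding an attached half-plane.

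For the boundary data, I would take $K_n$ to consist of $n$ distinct points on $\partial\Delta$ when $n$ is finite (yielding $n$ complementary open arcs) and take $K_\infty$ to be a convergent sequence together with its limit (yielding countably many complementary open arcs). For the punctures, place for each $p\in K_n$ a sequence in $\Delta$ converging radially to $p$, with spacing chosen so that the union $A_n$ is discrete in $\Delta$; by construction the set of accumulation points of $A_n$ in $\overline{\Delta}$ is precisely $K_n$. To exhibit $X_n$ as a tight flute surface, enumerate $A_n=\{a_1,a_2,\ldots\}$ in order of non-decreasing distance from the origin and choose nested Jordan curves $\gamma_1\subset\gamma_2\subset\cdots$ in $\Delta$, each disjoint from $A_n$, such that the Jordan disk bounded by $\gamma_k$ contains exactly the first $k+1$ points of $A_n$. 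The resulting topological pants decomposition has innermost pant bounded by $\gamma_1$ containing the cusps $a_1,a_2$ and, for $k\geq 2$, annular pants bounded by $\gamma_{k-1}$ and $\gamma_k$ each containing the single cusp $a_{k+1}$. Applying Proposition~\ref{prop: straightening pants decompositions} straightens this to a geodesic pants decomposition of the associated flute subsurface $X'_n\subset C(X_n)$ in which every non-glued hole is a puncture of $A_n$, so $X_n$ is the geodesic completion of a tight flute.

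A compact exhaustion by the disks bounded by the $\gamma_k$, perforated by small neighborhoods of the finitely many punctures inside, shows that the isolated ends of $X_n$ are exactly the points of $A_n$ and that $X_n$ has a single infinite type end, since the collar $\{1-1/k<|z|<1\}$ remains connected after removing the countably many points of $A_n$ it contains. By Remark~\ref{rem:stabilizer trivial} every interval of discontinuity of the uniformizing group $\Gamma$ has trivial stabilizer, so Proposition~\ref{thm: extension one to one} yields exactly $n$ $\Gamma$-equivalence classes of intervals of discontinuity, one per arc $J_i$, and Corollary~\ref{cor: simple boundary components} gives $n$ simple open geodesics on $\partial C(X_n)$, each bounding a half-plane component of $X_n\setminus C(X_n)$. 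Since $X_n$ has only one infinite type end, all $n$ of these boundary components lie in a single equivalence class and constitute the components of the unique visible infinite type end. The main delicate point is the combinatorial construction of the nested Jordan curves exhibiting the flute structure together with the verification of a unique infinite type end; both are routine once one notes that $A_n$ is a discrete countable subset of $\Delta$ accumulating only on $\partial\Delta$.
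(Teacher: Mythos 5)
Your proposal is correct and takes essentially the same route as the paper: the paper's Section 4 proves the theorem exactly by taking $X=\Delta\setminus A$ with $A$ accumulating on a closed set $K\subset\partial\Delta$ whose complement has $n$ arcs, and then invoking Remark \ref{rem:stabilizer trivial}, Proposition \ref{thm: extension one to one} and Corollary \ref{cor: simple boundary components} to identify the $n$ half-plane components of the unique infinite type end. The only difference is that you make explicit the choice of $K_n$ and the nested-curve verification of the tight flute structure, details the paper states without elaboration.
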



\section{Geodesically complete hyperbolic structures} \label{sec: geodesically complete}

The purpose of this section is to prove the following theorem and investigate some of its
consequences. 

\begin{thm}\label{thm:large cuffs complete}
 Let  $X^{\prime}$ be a (not necessarily complete) hyperbolic surface with a pants decomposition.  Then there exists a choice of twists along the cuffs of the pants 
  so that  the induced  hyperbolic surface  $X$, after possibly adding funnels, is a geodesically complete hyperbolic surface.  
\end{thm}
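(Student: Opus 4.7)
My approach combines Theorem~\ref{thm: completing a hyperbolic surface} with an explicit inductive construction of the twists. By that theorem, for any twist choice the surface $X'$ is contained in the convex core of a unique geodesically complete $X = \mathbb{H}/\Gamma$, and the completion requires attaching funnels and possibly half-planes; the half-planes appear exactly along the simple open geodesic components of $\partial C(X)$. By Corollary~\ref{cor: simple boundary components} and Proposition~\ref{prop: characterization of geometric ends}(2), the completion needs no half-planes if and only if every geodesic ray in $X'$ which exits an end of $X'$ has infinite length in $X'$. So the task reduces to choosing twists which prevent finite-length exit rays.

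A finite-length exit ray must cross infinitely many cuffs of the pants decomposition, so I need to arrange that any such infinite crossing accumulates infinite total length. The quantitative ingredient is the following fact about a pair of pants $P$ with adjacent cuffs $c_{-}, c_{+}$ of lengths $a, b$ and common perpendicular of length $d$: the length in $P$ of a geodesic joining $c_{-}$ to $c_{+}$ is a strictly increasing function of the arc-length displacement $s$, measured along $c_{+}$, of the crossing point from the foot of this perpendicular, and this function diverges as $s \to \ell(c_{+})/2$. Thus, even when $d$ is tiny (as happens when cuffs are long), a crossing made well away from the foot contributes a non-negligible length. The role of the twist $t_n$ at a cuff $c_n$ is precisely to move the foot of the perpendicular on the $P_{n+1}$-side relative to the foot on the $P_n$-side, under the gluing.

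I then proceed inductively along an exhaustion $Y_1 \subset Y_2 \subset \cdots$ of $X'$ by finite unions of pants, ordering the cuffs $c_1, c_2, \ldots$ accordingly. Having fixed $t_1, \ldots, t_{n-1}$, I choose $t_n$ so that the foot on $c_n$ of the common perpendicular in the pants $P_{n+1}$ attached across $c_n$ is displaced along $c_n$ by at least $\ell(c_n)/4$ from the image, under the $t_n$-twisted gluing, of the corresponding foot on the $P_n$-side. This forces any near-perpendicular crossing arriving at $c_n$ from $P_n$ to enter $P_{n+1}$ at a genuinely oblique angle, contributing a definite length in $P_{n+1}$ by the quantitative fact above. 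A diagonal/Baire-category argument over a countable dense collection of initial conditions in the unit tangent bundle of $Y_1$ then guarantees that the resulting twist sequence blocks all finite-length exit rays simultaneously. By the reduction in the first paragraph, the completion of $X'$ with these twists is obtained by attaching only funnels.

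The main obstacle is the diagonalization step: a geodesic ray is free to modulate its angle at each cuff to minimize its length in each successive pants, and the very combinatorics of which pants it traverses depends on the twists already chosen. One must therefore organize the induction over \emph{finite sequences} of traversed pants, imposing at stage $n$ only finitely many constraints that are robust under the still-to-be-chosen later twists, and use a Cantor-type diagonal to produce twists satisfying every constraint. Tracking the crossing points together with the angles, and verifying that the obstruction imposed at $c_n$ survives the choices at $c_{n+1}, c_{n+2}, \ldots$, is the technical heart of the argument.
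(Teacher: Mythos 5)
Your reduction in the first paragraph is fine, but the quantitative ingredient your plan rests on is incorrect, and it fails exactly in the regime the theorem is about (cuffs of unbounded length). First, the asserted divergence is false as stated: the distance from a point of $c_{+}$ to $c_{-}$ inside $P$ is a continuous function on the compact circle $c_{+}$, hence bounded; nothing diverges as $s\to\ell(c_{+})/2$ (divergence holds only if $s$ is unrolled over $\mathbb{R}$ in the universal cover). Second, and more seriously, the correct quadrilateral formula $\sinh w=\sinh d\,\cosh s$ (with $d$ the length of the common perpendicular and $w$ the distance to $c_{-}$ from the point of $c_{+}$ at displacement $s$) shows that your choice ``displace the feet by at least $\ell(c_n)/4$'' forces essentially nothing: for a tight pair of pants with long cuffs one has $\sinh d\asymp e^{-\ell_n/2}$ (this is exactly Lemmas \ref{lem:pent} and \ref{lem:small_side} of the paper), so a crossing at displacement $s=\ell_n/4$ is only forced to have length of order $e^{-\ell_n/4}$. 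These contributions are summable for any reasonably growing $\ell_n$, so a geodesic ray can cross every cuff while respecting your foot-separation constraints and still have finite total length; your constraints do not block finite-length exit rays. To get a uniform per-crossing lower bound you would need displacements within an additive constant of $\ell_n/2$, i.e.\ essentially maximal, and it is not clear a single twist per cuff can keep all the relevant feet (there are several on each cuff, coming from both adjacent pants and from arcs wrapping the third boundary component) that far apart --- nor is an escaping ray obliged to cross near any foot you control.

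The second gap is that the Cantor/diagonal step, which you yourself identify as the technical heart, is not carried out, and with the estimate above it is doubtful it can be organized as described: your stage-$n$ constraints fix foot positions, but an escaping ray is free to choose its crossing points and angles after all twists are fixed, so ``finitely many robust constraints per stage'' do not obviously exclude it. The paper avoids length estimates altogether: it fixes one simple closed geodesic $\gamma$ and a countable dense set of basepoints on it, and at stage $n$ chooses the twist on the one cuff first hit by the $n$-th orthoray so that this ray continues into the newly attached block in a direction belonging to the Cantor set of non-escaping directions of Lemma \ref{lem:projection}; thus each of the dense family of orthorays stays in the convex part forever. Lemma \ref{lem:complete end} then says that a visible infinite-type end would produce an open interval of orthorays on $\gamma$ leaving $C(X)$, which is impossible by density. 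If you want to salvage your plan, I would replace the per-crossing length bookkeeping by this kind of ``trap a dense family of rays, then use the open-interval criterion for visibility'' argument, which is what makes the induction genuinely robust under the later twist choices.
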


Thus the induced hyperbolic surface $X$ has the property that all of its infinite ends are not visible.

We will need two lemmas for the proof of Theorem  \ref{thm:large cuffs complete}. 
Let $Y^{\prime}$ be a   finite area  hyperbolic surface with non-empty geodesic boundary, and fix a   boundary component    $\alpha$.   Denote the geodesic completion of 
$Y^{\prime}$ by $Y$. We put an orientation on   $\alpha$  so that the  interior of
 $Y'$ lies to the left  of $\alpha$.
We are interested in unit  vectors based in  $\alpha$ and  directed  to the  interior of 
$Y^{\prime}$. Such a vector $v$  makes an (oriented) angle $\theta$ with $\alpha$, where 
$0 < \theta < \pi$,  and  determines a geodesic ray, 
$g_v: \mathbb{R}_{\geq 0} \rightarrow   Y$, we call  a {\it $\theta$-ray}. A $\pi /2$-ray is also called an {\it orthoray}.
Denote the vector field of such vectors  (based in  $\alpha$)
by $V_{\theta}$. Since the vectors in  $V_{\theta}$ may be identified with points in $\alpha$ (take  base points of  vectors), $V_{\theta}$  inherits a natural measure (called the boundary measure)  and  topology.  This topology is the same  as the  topology of  $V_{\theta}$ as a subspace of the unit tangent bundle of $Y$.
The main lemma for which the construction below hinges  is the following,

\begin{lem}  \label{lem:projection}  Fix   $\theta$,  $0 < \theta < \pi$.  
Then $V_{\theta}= A_{\theta} \dot{\cup}  O_{\theta}$   where 

\begin{enumerate}

\item $A_{\theta}:=\{v \in V_{\theta}: g_v (t)  \subset Y^{\prime}, \text{for all } t \geqslant 0\}$ is a Cantor set of boundary measure zero, 
\item $O_{\theta} :=\{v \in V_{\theta}: g_v (t) \cap \partial Y^{\prime} \neq 
\emptyset,  \text{ for some } t >0 \}$ is a countable union of disjoint open intervals in $\alpha$.

\end{enumerate}

\end{lem}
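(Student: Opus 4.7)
My plan is to lift the situation to $\mathbb{H}$ and translate the statement into one about the limit set $\Lambda(\Gamma)$. Write $Y = \mathbb{H}/\Gamma$; since $Y'$ has finite area with non-empty geodesic boundary, $\Gamma$ is finitely generated and of the second kind, and $Y' = C(Y) = CH(\Lambda(\Gamma))/\Gamma$. Fix a lift $\tilde\alpha \subset \mathbb{H}$ of $\alpha$: it is the axis of a primitive hyperbolic element $\gamma_\alpha \in \Gamma$ with fixed points $a, b \in \partial\mathbb{H}$, and a boundary component of $CH(\Lambda(\Gamma))$. For each $\tilde p \in \tilde\alpha$, let $\tilde g_{\tilde p}$ denote the geodesic ray issuing from $\tilde p$ at angle $\theta$ with $\tilde\alpha$ on the convex-hull side, and define $f: \tilde\alpha \to \partial\mathbb{H}$ by $f(\tilde p) := \tilde g_{\tilde p}(\infty)$. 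Since $0 < \theta < \pi$, the map $f$ is a real-analytic diffeomorphism onto the open interior arc $J \subset \partial\mathbb{H}$ from $a$ to $b$; it is $\gamma_\alpha$-equivariant and descends to a diffeomorphism $\bar f: \alpha \to J/\langle\gamma_\alpha\rangle$.

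The core step will be establishing the equivalence $v \in A_\theta \iff f(\tilde p) \in \Lambda(\Gamma)$, where $\tilde p$ is any lift of the base point of $v$. For $(\Leftarrow)$: if $f(\tilde p) \in \Lambda$, then $\tilde g_{\tilde p}$ is a geodesic ray from $\tilde p \in CH(\Lambda)$ to an ideal point in $\Lambda$, so by convexity $\tilde g_{\tilde p} \subset CH(\Lambda)$, and projecting gives $g_v \subset C(Y) = Y'$. For $(\Rightarrow)$: if $f(\tilde p)$ lies in an interval of discontinuity $I \subset J$, let $L$ be its bounding geodesic; then $L \neq \tilde\alpha$ and is disjoint from $\tilde\alpha$ in $\mathbb{H}$, and since $f(\tilde p)$ lies in the open arc $I$ it is distinct from the two ideal endpoints of $L$. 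Hence $\tilde g_{\tilde p}$ crosses $L$ transversely in finite time---starting from $\tilde p \in \tilde\alpha$ on the $CH(\Lambda)$-side of $L$ and ending at $f(\tilde p)$ on the opposite side---so $g_v$ meets $\partial Y'$ at that time, placing $v$ in $O_\theta$.

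Once this equivalence is in hand, the two claimed properties transfer through $\bar f$. Since $\Lambda$ is closed in $\partial\mathbb{H}$, the set $O_\theta$ is open in the circle $\alpha$ and hence a countable disjoint union of open arcs. Since $\Gamma$ is finitely generated and of the second kind, $\Lambda$ is a Cantor set of Lebesgue measure zero in $\partial\mathbb{H}$. Pulling back via the diffeomorphism $\bar f$, the set $A_\theta = \bar f^{-1}((\Lambda \cap J)/\langle\gamma_\alpha\rangle)$ is nonempty (as $\Lambda \cap J$ is infinite, $\Gamma$ being non-elementary), compact (closed in $\alpha$), of boundary measure zero (a diffeomorphism pulls back measure-zero sets), perfect (any $v \in A_\theta$ is approximated by other points of $A_\theta$, since $\Lambda$ is perfect and $J$ is open), and totally disconnected (any closed subset of the $1$-manifold $\alpha$ with empty interior is totally disconnected). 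This identifies $A_\theta$ as a Cantor set of boundary measure zero.

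The hardest part will be making the direction $(\Rightarrow)$ fully rigorous, particularly in the degenerate case where $L$ and $\tilde\alpha$ share an ideal endpoint---necessarily a parabolic fixed point. One must verify that $\tilde g_{\tilde p}$ still meets $L$ at finite hyperbolic distance; this should follow from the fact that $L$ and $\tilde\alpha$ are asymptotic parallels while $0 < \theta < \pi$ forces $\tilde g_{\tilde p}$ to peel away from $\tilde\alpha$ at positive speed, eventually crossing into the side of $L$ containing $I$. A secondary concern is appealing to the measure-zero property of $\Lambda(\Gamma)$ for finitely generated Fuchsian groups of the second kind; this is standard but should be cited carefully.
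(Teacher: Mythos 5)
Your argument is correct and is essentially the paper's own proof: both lift to $\mathbb{H}$, use the angle-$\theta$ correspondence between points of $\alpha$ and the ideal boundary arc on the interior side of $\tilde\alpha$, identify $A_\theta$ with the preimage of the limit set $\Lambda(\Gamma)$ and $O_\theta$ with the preimage of the intervals of discontinuity, and then invoke the fact that the limit set of a finitely generated Fuchsian group of the second kind is a measure-zero Cantor set. The degenerate case you flag as the hardest part never occurs: the endpoints of $\tilde\alpha$ are the hyperbolic fixed points of $\gamma_\alpha$ (not parabolic), and iterating $\gamma_\alpha$ on any point of $\Lambda\cap J$ shows these endpoints are accumulated by $\Lambda\cap J$, so no interval of discontinuity inside $J$ shares an ideal endpoint with $\tilde\alpha$, and your transversal-crossing argument applies as stated.
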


\begin{proof}  $Y^{\prime}$ is the convex core of the  complete hyperbolic surface, 
$Y=\mathbb{H}/ \Gamma$, where $\Gamma$  is a torsion-free finitely generated Fuchsian group of the second kind. Consider a  connected  oriented  lift  of $\alpha$ to the upper half-plane and  unit tangent vectors  (emanating  to the left) that form an angle $\theta$ with this lift.  The geodesic ray determined by such a vector hits $\partial \mathbb{H}$ in exactly one point.  Conversely any point  on the left side of the lift of $\alpha$ in  $\partial \mathbb{H}$ is the endpoint of such a geodesic ray.  Hence  there exists a projection map $P_{\theta}$  from  
$\partial \mathbb{H}$ to the lift.  When $\theta = \frac{\pi}{2}$ this map is the usual orthogonal projection. The action of $\Gamma$ on  $\partial \mathbb{H}$ breaks up into the (non-empty) set of discontinuity and the limit set.  The set of discontinuity is made of intervals that are bounded by geodesics which are lifts of components of $\partial Y$.
We next  note that the geodesic ray determined by a  $\theta$-vector
$v \in V_{\theta}$ lifts to a geodesic ray that  hits   $\partial \mathbb{H}$ at a point of discontinuity if and only if the vector is in  $O_{\theta}$. Otherwise the geodesic ray hits the limit set of $\Gamma$ and hence  is in $A_{\theta}$. 
Since the limit set of a finitely generated Fuchsian group of the second kind  is known to be  a measure zero Cantor set we have verified items (1) and (2). 
\end{proof}

\begin{lem} \label{lem:complete end}
  Let  $X^{\prime}$ be a (not necessarily complete) hyperbolic surface with a pants decomposition  $\mathcal{P}$,  and let $X$ be  its geodesic completion. Fix $\gamma$  a simple closed geodesic on the hyperbolic surface $X^{\prime}$ and let $e$ be an infinite type end of  $X$.  The  following are equivalent,

\begin{enumerate}
\item $e$ is a visible end,
\item there exists an orthoray based in  $\gamma$  that exits the end $e$ and eventually leaves $C(X)$,

\item there exists an interval $I$  in $\gamma$ for which each  orthoray  based in  $I$  exits the end $e$ and eventually  leaves $C(X)$.
\end{enumerate}
\end{lem}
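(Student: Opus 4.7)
The plan is to establish $(3) \Rightarrow (2) \Rightarrow (1)$ by immediate observations and then prove $(1) \Rightarrow (3)$ by passing to the universal cover. The implication $(3) \Rightarrow (2)$ is trivial, since any orthoray based in $I$ witnesses $(2)$. For $(2) \Rightarrow (1)$, an orthoray based in $\gamma$ that exits $e$ and eventually leaves $C(X)$ is exactly the sort of ray called for in Lemma \ref{lem: visible ends}(3), which immediately yields that $e$ is visible.

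For the main implication $(1) \Rightarrow (3)$: since $e$ is visible and of infinite type, Corollary \ref{cor: simple boundary components} (combined with Theorem \ref{thm: completing a hyperbolic surface}) supplies a component of $e$, namely a simple open geodesic $L \subset \partial C(X)$ bounding a closed half-plane $H$ attached to $C(X)$ and associated to $e$. Writing $X=\mathbb{H}/\Gamma$, fix a lift $\tilde{L} \subset \mathbb{H}$ of $L$ together with the adjacent lifted half-plane $\tilde{H}$; the endpoints of $\tilde{L}$ bound an interval of discontinuity $I_L \subset \Omega(\Gamma)$.

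Now fix a lift $\tilde{\gamma}$ of the closed geodesic $\gamma$. Its endpoints are fixed points of a hyperbolic element of $\Gamma$, hence lie in $\Lambda(\Gamma)$ and thus not in $I_L$. Because $\gamma \subset C(X)$, the lift $\tilde{\gamma}$ sits inside $CH(\Lambda(\Gamma))$ and cannot cross the boundary geodesic $\tilde{L}$, so $\tilde{\gamma}$ lies entirely on the interior side of $\tilde{L}$ while $I_L$ lies on the opposite arc of $\partial \mathbb{H}$. Applying orthogonal projection $P_{\pi/2}$ from $\partial \mathbb{H}$ minus the endpoints of $\tilde{\gamma}$ onto $\tilde{\gamma}$, and noting that $I_L$ is a connected subset confined to one component of this complement, we obtain that $\tilde{I} := P_{\pi/2}(I_L)$ is an open subinterval of $\tilde{\gamma}$.

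For each $\tilde{x} \in \tilde{I}$, the orthoray emanating from $\tilde{x}$ on the $I_L$-side of $\tilde{\gamma}$ terminates at the point of $I_L$ corresponding to $\tilde{x}$. Since it begins in $CH(\Lambda(\Gamma))$ and ends in $\Omega(\Gamma)$, and since $\mathbb{H} \setminus CH(\Lambda(\Gamma))$ decomposes as a disjoint union of open half-planes each bounded by a single geodesic of $\partial CH(\Lambda(\Gamma))$, the orthoray must cross $\tilde{L}$ exactly once and then stay in $\tilde{H}$ forever. Taking a subinterval of $\tilde{I}$ of length smaller than $\ell(\gamma)$ and projecting down gives an open interval $I \subset \gamma$ all of whose orthorays cross $L$ into $H$, thereby leaving $C(X)$ in finite time and exiting $e$. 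The main care is geometric bookkeeping in this last step: one must check that $\tilde{\gamma}$ really lies on the correct side of $\tilde{L}$ and that each orthoray from $\tilde{I}$ actually exits through $\tilde{L}$ (rather than some other component of $\partial CH(\Lambda(\Gamma))$), both of which are consequences of $\tilde{\gamma} \subset CH(\Lambda(\Gamma))$ together with the half-plane structure of its complement.
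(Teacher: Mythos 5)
Your proposal is correct and takes essentially the same route as the paper: both reduce the visible infinite-type end to a simple open geodesic boundary component of $C(X)$ bounding an attached half-plane (Corollary \ref{cor: simple boundary components}, Theorem \ref{thm: completing a hyperbolic surface}), lift to $\mathbb{H}$, and produce an interval's worth of orthorays from $\tilde{\gamma}$ that cross into the lifted half-plane and hence leave $C(X)$ while exiting $e$. The only difference is organizational: the paper proves $(1)\Rightarrow(2)$ via the finite-length common perpendicular from $\tilde{\gamma}$ to the lifted boundary geodesic and then gets $(2)\Rightarrow(3)$ from the fact that nearby orthorays also hit it, whereas you obtain the interval in one step by orthogonally projecting the interval of discontinuity onto $\tilde{\gamma}$ --- the same geometric content in slightly different packaging.
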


\begin{proof}  We prove  $1\implies  2\implies 3\implies 1.$
If  $e$ is a visible end  for $X$  then by  definition there exists a geodesic ray that goes out 
$e$ and eventually leaves the convex core $C(X)$.  Hence the geodesic ray must intersect   a component of the boundary of $C(X),$ denoted $\beta$,  and then enter into a half-plane. By Corollary  \ref{cor: simple boundary components},  $\beta$  is necessarily  a simple open geodesic.  We choose a lift of this simple geodesic  
$\tilde{\beta}$ and a lift  $\tilde{\gamma}$ of $\gamma$ and observe   that the orthogeodesic from  $\tilde{\gamma}$ to $\tilde{\beta}$ has finite length.  This orthogeodesic extends to an orthoray that enters the half-plane bounded by 
$\tilde{\beta}$. Pushing this orthoray  to the quotient  $X$  and noting that
this orthoray exits the end $e$ proves  item (2). 
  That there is an intervals worth of such orthorays that hit $\tilde{\beta}$
  follows from the general fact that  the basepoint of an orthogeodesic  between any two geodesics  $\tilde{\gamma}$ to  $\tilde{\beta}$ in $\mathbb{H}$    has an interval of orthorays that all intersect  $\tilde{\beta}$ (See figure \ref{Interval of orthorays}).  Thus  $2\implies 3$. The final implication  $3\implies 1$ is clear.
  \end{proof}


\begin{figure}[h]
\leavevmode \SetLabels
\L(.37*.5) $\tilde{\gamma}$\\
\L(.62*.5) $\tilde{\beta}$\\
\endSetLabels
\begin{center}
\AffixLabels{\centerline{\epsfig{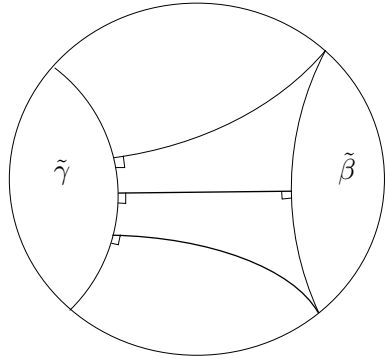} }}
\vspace{-30pt}
\end{center}
\caption{Interval of orthorays.} 
\label{Interval of orthorays}
\end{figure}


We are now ready to prove Theorem \ref{thm:large cuffs complete}.
 
\begin{proof}  For each $n$, let     $\{Y_n\}_{n=1}^{\infty}$ be    an exhaustion of  $X^{\prime}$ by finite area geodesic subsurfaces which, to make the argument less cumbersome,   we assume  have the additional property that if a boundary geodesic of $Y_n$ is not a boundary geodesic of $X^{\prime}$ then it is not a boundary geodesic of $Y_{n+1}$. 
Now set  $Z_{n+1}=Y_{n+1}-Y_n$ and note that  in general  $Z_n$ is a finite union of (possibly disconnected)   geodesic subsurfaces.
Fix $\gamma$ a simple closed  oriented geodesic on $Y_1$ and choose an ordered  countable dense subset $A=\{a_n\}_{n=1}^{\infty}$ of 
 $\gamma$.  If $\gamma$ is a boundary geodesic of $Y_1$, by possibly changing the orientation of $\gamma$,  we may assume that $Y_1$ lies to the left of $\gamma$. 
 Consider the orthoray $g_1$ that lies to the left of $\gamma$ and is based at $a_1$. 
  If the ray stays  inside $Y_1$ for all time or if 
it hits $\partial Y_1$ in a component that is a boundary geodesic of $X^{\prime}$ then we do nothing and glue $Z_2$ any way we like. Otherwise,  $g_1$ hits a component, say $\alpha$, of  $\partial Y_1$,  at a point we denote by $p_1$. Denote the  angle  $g_1$ makes with $\alpha$  by $\theta_1$.  Let $Z_2^{*}$ be the component of $Z_2$ which contains  the geodesic that will be identified with $\alpha$. From  
Lemma  \ref{lem:projection}
 the vector field $V_{\pi -\theta_1}$ based in $\alpha$  contains a Cantor set of  vectors,  whose associated geodesic ray stays in $Z_2$. Pick one and call it $v_1$.  Now glue $Z_2$ to $Y_1$ along $\alpha$ so that the orthoray $g_1$ extends smoothly through $p_1$ and into $Z_2$ (see figure \ref{Angles match}).  By construction the ray $g_1$ stays  inside $Y_2=
 Y_1 \cup Z_2$ for infinite time. 
Next  consider the orthoray  $g_2$ with basepoint $a_2$  in the subsurface $Y_2$. As before if  $g_2$ stays inside $Y_2$ or  hits the $\partial Y_2$ in a boundary component 
of $X^{\prime}$ then we do nothing and just glue $Z_3$ anyway we like. Otherwise, 
as before we use Lemma \ref{lem:projection} to glue $Z_3$ to $Y_2$ so that the orthoray 
$g_2$ extends smoothly into $Z_3$ and stays inside $Y_3=Y_2 \cup Z_3$ for infinite time.
We can continue this process ad infinitum so that we have constructed $X$ with 
specified twist parameters  so that the  orthoray which begins at $a_n$ stays in   $Y_{n+1}$.    Now using Lemma \ref{lem:complete end}  we may conclude that all the infinite type ends of $X$ are  not visible.   Finally we add funnels to all the closed boundary geodesics of $X^{\prime}$ and conclude that the resulting surface is geodesically complete. 
\end{proof}


\begin{figure}[h]
\leavevmode \SetLabels
\L(.35*.3) $\theta$\\
\L(.6*.5) $\theta$\\
\endSetLabels
\begin{center}
\AffixLabels{\centerline{\epsfig{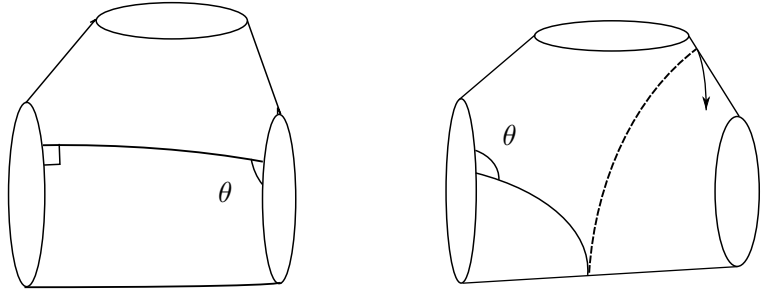} }}
\vspace{-30pt}
\end{center}
\caption{Angles match.} 
\label{Angles match}
\end{figure}


A {\it flute surface}  is a sequence of pairs of pants glued in succession along common length boundaries.
The flute surface as it stands is not necessarily geodesically complete but always has a natural geodesic completion by Theorem \ref{thm: completing a hyperbolic surface}. 
A flute surface has genus zero, no non-planar ends, and space of ends homeomorphic to 
$\{1/n\}_{n=1}^{\infty} \cup 0$.
Denote the successive cuffs of the flute surface by $\{\alpha_n\}$ as in  figure  \ref{Flute surface}.  We say that a flute surface is {\it tight} if each of the pants holes that have not been glued along are in fact cusps. 

\begin{rem}
A tight flute surface is geodesically complete if and only if its unique infinite type end 
is not visible.
\end{rem}


\begin{figure}[h]
\leavevmode \SetLabels
\L(.33*.22) ${\alpha}_{1}$\\
\L(.42*.18) ${\alpha}_{2}$\\
\L(.68*.0) ${\alpha}_{n}$\\
\endSetLabels
\begin{center}
\AffixLabels{\centerline{\epsfig{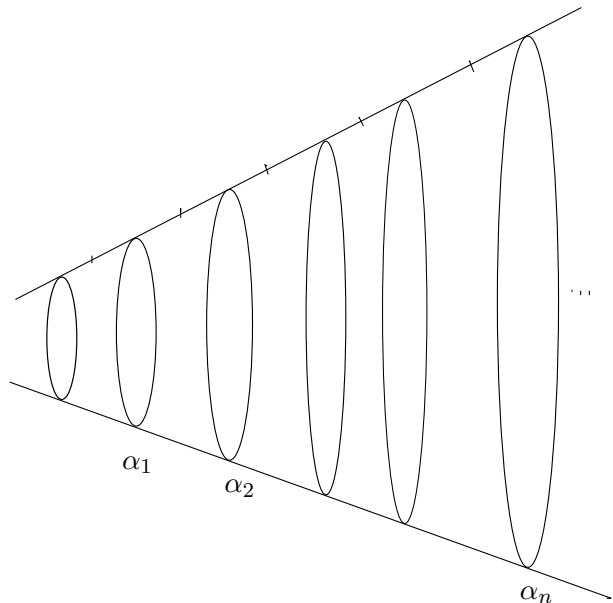} }}
\vspace{-30pt}
\end{center}
\caption{Flute surface.} 
\label{Flute surface}
\end{figure}


\begin{thm} Fix any  positive numbers $\{\ell_n\}$, where $\ell_n \rightarrow \infty$. There exists 
 a  tight flute surface of the first kind (that is geodesically complete)   with 
 $\ell(\alpha_n)=\ell_n$, for all $n$.
\end{thm}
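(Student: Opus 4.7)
The plan is to build the desired surface as a tight flute $X'$ with the prescribed cuff lengths and some initial choice of twists, and then to feed $X'$ into Theorem~\ref{thm:large cuffs complete} to adjust the twists so that the completion is of the first kind.

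First I would construct the underlying pieces. For each $n\geq 1$ let $P_n$ be a tight pair of pants: $P_1$ has one geodesic boundary of length $\ell_1$ together with two cusps, and for $n\geq 2$, $P_n$ has two geodesic boundaries of lengths $\ell_{n-1}$ and $\ell_n$ together with one cusp. These exist as hyperbolic surfaces for any prescribed cuff lengths (e.g.\ via the standard right-angled hexagon construction allowing ideal vertices). Gluing $P_n$ to $P_{n+1}$ isometrically along the common cuff of length $\ell_n$, with any initial choice of twist, produces a (not necessarily complete) hyperbolic surface $X'$ equipped with a pants decomposition $\{P_n\}_{n\geq 1}$. By construction $X'$ is a tight flute surface and $\ell_{X'}(\alpha_n)=\ell_n$ for every $n$.

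Next I would invoke Theorem~\ref{thm:large cuffs complete}, applied to the pants decomposition just described: it furnishes a new choice of twist parameters along the cuffs $\{\alpha_n\}$, keeping the cuff lengths $\ell_n$ fixed, such that the resulting hyperbolic surface $X$, after possibly adding funnels, is geodesically complete. The crucial observation is that tightness makes the funnel step vacuous: every non-glued boundary component of every $P_n$ is a cusp, so $X'$ has no free geodesic boundary to which a funnel could be attached. Consequently $X$ itself, with the prescribed cuff lengths $\ell(\alpha_n)=\ell_n$, is already geodesically complete.

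Finally I would argue $X$ is of the first kind. The end space of a flute surface consists of the countably many isolated cusp ends (one from each $P_n$) and the unique infinite type end $e_\infty$, the limit of the isolated ones. Cusp ends are never visible, and the content of Theorem~\ref{thm:large cuffs complete} is precisely that the produced twists make every infinite type end non-visible, so $e_\infty$ is non-visible as well. Therefore $X$ has no visible ends at all, and Proposition~\ref{prop: characterization of geometric ends}(3) yields $X=C(X)$, i.e.\ $X$ is of the first kind. The only obstacle worth flagging is logistical rather than mathematical: one must set up the initial surface $X'$ so that the geodesic completion produced by Theorem~\ref{thm:large cuffs complete} adds no funnels, and this is exactly arranged by choosing tight pants throughout.
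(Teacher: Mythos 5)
Your proposal is correct and follows essentially the same route as the paper: build the tight flute from tight pairs of pants with the prescribed cuff lengths, choose the twists via Theorem~\ref{thm:large cuffs complete} (tightness making the funnel step vacuous), and conclude the surface is of the first kind via Proposition~\ref{prop: characterization of geometric ends}. No gaps.
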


\begin{proof}   Recall that a  flute surface has countably many isolated  ends which converge   to an infinite type end. We construct this  surface $X^{\prime}$ out of tight pairs 
of pants,  $\{P_n \}_{n=0}^{\infty}$,  glued in succession.  $P_0$ has cuff lengths,
$(0,0,\ell_1)$ and then generally $P_n$ has cuff lengths  $(\ell_n,0,\ell_{n+1})$ for 
$n=1,2,3,...$. The gluing parameters are chosen as in Theorem \ref{thm:large cuffs complete}.  With these choices  the infinite type end of $X^{\prime}$  is 
of the first kind, and hence all ends of $X$  are of the first kind.  
By Proposition \ref{prop: characterization of geometric ends},
  $X^{\prime}=X$ is a tight flute of the first kind. 
   \end{proof}

  \begin{cor} Let $X$ be the infinite genus surface with one end. 
  There exists a  geodesically complete hyperbolic structure on $X$  for which
  
  \begin{enumerate}
  
  \item the hyperbolic structure is of the first kind. In  particular, the  infinite type end is  not visible (complete),
  
 \item   its  length spectrum is discrete.That is, there are finitely many closed geodesics with length less than any given number.  In particular, any pants decomposition is not upper bounded but is lower  bounded. 
  
  \end{enumerate}
  
  \end{cor}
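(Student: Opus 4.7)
The strategy is to realize $X$ as the geodesic completion of a pants decomposition whose cuff lengths tend rapidly to infinity, use Theorem~\ref{thm:large cuffs complete} to choose twists making the structure complete, and then verify (1) and (2) in turn.

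First I would pick any locally finite pants decomposition of the topological infinite-genus, one-ended surface in which every cuff is glued (no free boundary), so that no funnels will need to be attached to the resulting hyperbolic surface.  Assign cuff lengths $\{\ell_n\}$ tending to infinity at a rate to be specified below, and apply Theorem~\ref{thm:large cuffs complete} to obtain twists so that the induced hyperbolic surface $X$ is geodesically complete.  Since there are no free cuffs, no funnels are attached, so every infinite-type end of $X$ is non-visible.  By Proposition~\ref{prop: characterization of geometric ends}(3), $X=C(X)$; that is, $X$ is of the first kind, proving~(1).

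For~(2), let $\{Y_n\}$ be the standard exhaustion by the union of the first $n$ pants of the decomposition.  Each $Y_n$ is a finite-type hyperbolic surface with geodesic boundary, so its length spectrum is discrete.  Since every closed geodesic on $X$ is compact it lies in some $Y_n$, and the task reduces to showing that for each $L>0$ there is an index $n(L)$ such that every closed geodesic of length $\leq L$ lies in $Y_{n(L)}$.  The main geometric input is the standard pants-trigonometric lower bound: in a pair of pants whose three cuff lengths are all at least $L_0$, every closed geodesic other than a cuff has length $\geq 2L_0-O(1)$, since the shortest such geodesic is the figure-eight winding once around two cuffs, whose length asymptotes to the sum of the cuff lengths as they grow.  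Consequently any closed geodesic contained in a single pants $P_n$ has length at least $\ell_n-O(1)$.

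The principal obstacle is a closed geodesic $\gamma$ of length $\leq L$ that crosses cuffs between distinct pants: the collars around long cuffs are thin, so a short arc of $\gamma$ can traverse a long cuff, and one must exploit the closing-up of $\gamma$ to force length accumulation.  I would decompose $\gamma$ into its maximal subarcs within each pants it visits; because $\gamma$ closes up, at least one of these subarcs must be essential (i.e.\ not boundary-parallel) in some pants $P_n$ whose cuffs are long, at which point the pants-trigonometric bound yields $\ell(\gamma)\gtrsim\ell_n$.  Choosing $\{\ell_n\}$ to grow sufficiently rapidly (for instance $\ell_n\geq n$) then guarantees $n(L)<\infty$, giving the discreteness.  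The final assertion, that every pants decomposition of $X$ has cuff lengths bounded below and unbounded above, is then formal: cuff lengths form a subset of the length spectrum, which is discrete and countably infinite.
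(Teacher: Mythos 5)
Your treatment of part (1) is exactly the paper's: choose a pants decomposition with no free cuffs, invoke Theorem \ref{thm:large cuffs complete} to pick twists making the surface geodesically complete, and conclude via Proposition \ref{prop: characterization of geometric ends} that the structure is of the first kind. The divergence, and the problem, is in part (2). The paper does not prove discreteness of the length spectrum at all; it quotes \cite{BK}, where hyperbolic structures with discrete length spectrum are constructed for every topological type, with the essential feature that discreteness holds \emph{for any choice of twist parameters}. That feature is what makes the corollary work, because the twists here are dictated by the completeness construction of Theorem \ref{thm:large cuffs complete} and cannot be adjusted afterwards to help with the spectrum.

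Your attempt to prove discreteness directly has a genuine gap at its key step: the assertion that once some subarc of a short closed geodesic $\gamma$ is essential in a pants $P_n$ with long cuffs, ``the pants-trigonometric bound yields $\ell(\gamma)\gtrsim\ell_n$.'' Essential arcs in a pair of pants with long cuffs are \emph{not} long: the perpendicular seam joining two cuffs of lengths $x,y$ (with third cuff $z$) satisfies $\cosh d=\frac{\cosh(z/2)+\cosh(x/2)\cosh(y/2)}{\sinh(x/2)\sinh(y/2)}$, which stays bounded and can tend to $1$ (so $d\to 0$) as the cuff lengths blow up. This is precisely the phenomenon this paper is organized around — when cuff lengths go to infinity the distance between cuffs goes to zero, which is why such surfaces can fail to be complete — and it is why Lemma \ref{lem:pent} only produces a lower bound of the form $C_1+c-a$, a \emph{difference} of cuff lengths, and even that requires the flute combinatorics (the geodesic must turn back and cross the arc running to the cusp). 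So a priori a closed geodesic of length $\le L$ could traverse arbitrarily many far-out pants through short essential arcs; your closing-up argument does not exclude this, and on the infinite-genus one-ended surface the geodesic can wind around handles rather than ``return,'' so the flute-type estimate does not transfer either. Your bound for geodesics confined to a single pants is fine, and the final formal deduction (cuff lengths are an infinite subset of a discrete spectrum, hence unbounded above and bounded below) is fine, but the crossing case needs either the quantitative length choices of \cite{BK} (arranged to be twist-independent) or some substitute estimate; simply taking $\ell_n\ge n$ and appealing to essentiality of an arc does not do it.
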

  
  \begin{proof}   In  the paper  \cite{BK}  it is shown that  for any topological surface there exist hyperbolic structures with a discrete length spectrum. Moreover, the constructions 
  are independent of the twist parameters.  In particular, we can construct  an infinite genus surface with one end using pairs of pants so that it admits a hyperbolic structure with  a discrete length spectrum for any choice of twist parameters. Choosing  the twist parameters  as in Theorem \ref{thm:large cuffs complete} we are guaranteed that the infinite type end is not visible, and hence (since there are no other ends) the hyperbolic structure is of the first kind. 
  \end{proof}


\section{Flute surfaces with rapid increase in cuff lengths}
\label{sec:flute_rapid}

We consider a tight flute surface (see figure  \ref{Flute surface}). That is, we have  a sequence $\{ P_n\}$ of (tight) geodesic pairs of pants whose one cuff is a cusp and two other cuffs are geodesics 
$\alpha_{n}$ and $\alpha_{n+1}$.  Except for the first pants which has two cusps. Denote by $l_n$ the length of $\alpha_n$. We choose one cuff of $P_n$ to be a cusp for the simplicity of the topology of the surface. 

We glue $P_n$ to $P_{n+1}$ by identifying $\alpha_{n+1}$ on $P_n$ with $\alpha_{n+1}$ on $P_{n+1}$. The identification is given by a twist parameter $t_n$ with $0\leq t_n<l_n$. We fix a choice of $t_n$ such that the surface obtained after all the identifications is complete (that is, no visible ends) which is possible by Theorem 2. Thus the obtained surface is  a geodesically complete tight flute surface. There are many choices in the gluings such that the obtained surface is geodesically complete. We fix one such choice and denote the geodesically complete tight  flute surface by $X_0$. 

We  next choose the cuff lengths $\{l_n\}$  of the pants $\{ P_n \}$ to be {\it rapidly increasing}.
That is, $\{ l_n\}$  (strictly)  monotonically goes to $\infty$ and 

\begin{equation}
\label{eq:rapid-growth}
\sum_{i=1}^nl_i=o(l_{n+1}).
\end{equation}


\section{Teichm\"uller spaces}
\label{sec:Teichmuller}

The quasiconformal Teichm\"uller space $T_{qc}(X_0)$ of a geodesically complete hyperbolic surface $X_0$ (without visible ends) consists of all quasiconformal mappings $f:X_0\to X$ modulo post compositions by conformal maps and homotopy. The Teichm\"uller distance between two points $[f:X_0\to X_1]$ and $[g:X_0\to X_2]$ is given by
$$
d_{T}([f],[g])=\frac{1}{2} \inf_{h\sim f\circ g^{-1}}\log K(h)
$$
where the infimum is over all quasiconformal maps $h$ homotopic to $f\circ g^{-1}$ and $K(h)$ is the quasiconformal constant of $h$.

The length spectrum Teichm\"uller space $T_{ls}(X_0)$ of a geodesically complete hyperbolic surface $X_0$ consists of all homeomorphisms $f:X_0\to X$ up to isometry and homotopy, where $X$ is a hyperbolic (not necessarily complete) surface with $\sup_{\alpha}|\log \frac{l_{X}(\alpha)}{l_{X_0}(\alpha )}|<\infty$ and the supremum is over all homotopy classes of simple closed curves $\alpha$.
The length spectrum distance on $T_{ls}(X_0)$ is defined by
$$
d_{ls}([f],[g])=\frac{1}{2}\sup_{\alpha} |\log \frac{l_{f(X_0)}(\alpha)}{l_{g(X_0)}(\alpha )}|
$$
where the supremum is over all simple closed geodesics $\alpha$ on $X_0$. 

The length spectrum Teichm\"uller space $T_{ls}(X_0)$ is complete in the length spectrum metric (cf. \cite{ALPSS}) and $T_{qc}(X_0)\subset T_{ls}(X_0)$. When $X_0$ has a geodesic pants decomposition whose cuff lengths are bounded from above and from below then the length spectrum metric induces the same topology as the Teichm\"uller metric on $T_{qc}(X_0)$ (cf. \cite{Shi}). When $X_0$ has a geodesic pants decomposition with upper bounded cuff lengths and a sequence of cuff lengths goes to zero, then the length spectrum metric is incomplete on $T_{qc}(X_0)$ (cf. \cite{ALPS}) and thus it does not induce the same topology as the Teichm\"uller metric. In the case of upper bounded cuff lengths,  Teichm\"uller spaces $T_{qc}(X_0)$ and $T_{ls}(X_0)$ are parametrized by the Fenchel-Nielsen coordinates (cf. \cite{ALPS1}, \cite{ALPS}), the closure of $T_{qc}(X_0)$ inside $T_{ls}(X_0)$ is described in terms of the Fenchel-Nielsen coordinates (cf. \cite{Sar3}) and local biLipschitz structures of $T_{qc}(X_0)$ and $T_{ls}(X_0)$ is described using the Fenchel-Nielsen coordinates (cf. \cite{ALPS1} and \cite{Sar3}). When $X_0$ has no geodesic pants decomposition with an upper bounded cuff lengths, the parametrization of $T_{qc}(X_0)$ using the Fenchel-Nielsen coordinates is rather challenging. In what follows we describe some aspects of the Fenchel-Nielsen coordinates when $X_0$ is a flute surface with rapidly increasing cuff lengths.


\section{The closure of $T_{qc}(X_0)$}
\label{sec:closure}

In this section we assume that $X_0$ is the  geodesically complete hyperbolic flute surface defined in section 6. Namely, $X_0$ is obtained by gluing tight pairs of pants with cuff lengths $\{ l_n\}$ satisfying
$$
\sum_{i=1}^nl_i=o(l_{n+1}).
$$
In what follows we need the following lemma.

\begin{lem}
\label{lem:pent}
Let $\Sigma$ be a geodesic pentagon with right angles at $A$, $B$, $C$ and $D$, and an ideal vertex at $E$. Let $a$, $b$ and $c$  be the lengths of the sides $AB$, $BC$ and $CD$, respectively. Let $d$ be the length of the geodesic segment orthogonal to both $AB$ and $DE$. Then, for $a>1$ large enough and $c>a$, there exist constants $C_1$ and $C_2$ such that
$$
C_1+c-a\leq d\leq C_2+c-a.
$$
\end{lem}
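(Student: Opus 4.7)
The plan is to normalize by placing the ideal vertex $E$ at $\infty$ in the upper half-plane $\mathbb{H}$. The right angles at $A$ and $D$ then force $AE$ and $DE$ to be vertical geodesic rays, say on the lines $\{x=x_A\}$ and $\{x=x_D\}$ with $A=(x_A,y_A)$ and $D=(x_D,y_D)$, while the sides $AB$ and $CD$ are the unique geodesics through $A$ and $D$ perpendicular to these vertical lines; in coordinates these are Euclidean semicircles centered at $(x_A,0)$ of radius $y_A$ and at $(x_D,0)$ of radius $y_D$. Parametrizing the two circles by hyperbolic arc length gives the explicit coordinates $B=(x_A+y_A\tanh a,\ y_A\,\mathrm{sech}\,a)$ and $C=(x_D-y_D\tanh c,\ y_D\,\mathrm{sech}\,c)$.

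I would next exploit the remaining right angles at $B$ and $C$ to close up the pentagon. The geodesic containing the side $BC$ must be orthogonal to the circle $AB$ at $B$ and to the circle $CD$ at $C$. Writing $BC$ as a Euclidean semicircle and applying the orthogonality criterion for two Euclidean circles (the squared distance between centers equals the sum of the squared radii) to each of these two conditions shows that $BC$ lies on the semicircle of radius $y_A/\sinh a$ centered at $(x_A+y_A\coth a,0)$ and, equivalently, on the semicircle of radius $y_D/\sinh c$ centered at $(x_D-y_D\coth c,0)$. Equating the two descriptions yields the closure relations
$$
y_D \;=\; y_A\,\frac{\sinh c}{\sinh a}, \qquad x_D-x_A \;=\; y_A\,\frac{\cosh a + \cosh c}{\sinh a}.
$$

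With these in hand, the common perpendicular of the disjoint geodesics $AB$ and $DE$ is the Euclidean semicircle centered at $(x_D,0)$ of radius $\sqrt{(x_D-x_A)^2-y_A^2}$, and the standard distance formula between a vertical geodesic and a semicircular geodesic gives $\cosh d = (x_D-x_A)/y_A$. Combining with the closure relation yields the clean identity
$$
\cosh d \;=\; \frac{\cosh a + \cosh c}{\sinh a} \;=\; \coth a + \frac{\cosh c}{\sinh a}.
$$
For $a\geq 1$ and $c\geq a$, the term $\coth a$ is bounded while $\cosh c/\sinh a$ lies between $e^{c-a}$ and a fixed constant times $e^{c-a}$; hence $e^{c-a}\leq \cosh d \leq Ke^{c-a}$ for a universal $K$. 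Taking $\cosh^{-1}$ and using $\log x\leq \cosh^{-1}x \leq \log(2x)$ for $x\geq 1$ then produces the asserted bounds $C_1+(c-a)\leq d \leq C_2+(c-a)$.

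The only substantive step is the coordinate bookkeeping behind the closure relation; everything after that is routine. One minor subtlety worth flagging is that $b$ does not appear in the final formula for $d$. This reflects the fact that once the four right angles and the ideal vertex are fixed, the three finite side lengths $a,b,c$ already satisfy an identity (the pentagon is rigid given any two of them), so the hypothesis on $b$ enters only implicitly through the requirement that such a pentagon actually exists.
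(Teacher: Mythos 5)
Your proof is correct, but it takes a genuinely different route from the paper. The paper stays entirely inside Beardon's pentagon trigonometry: it first applies the identity $\cosh a\cosh c+1=\sinh a\cosh b\sinh c$ for the pentagon with ideal vertex $E$ to get $\sqrt{2}e^{-a}\leq\sinh b\leq C'e^{-a}$, and then applies the right-angled pentagon identity $\cosh d=\sinh b\sinh c$ to the pentagon $A'BCDD'$ cut off by the common perpendicular, obtaining $\cosh d\asymp e^{c-a}$ and hence the additive bounds. You instead work in explicit upper half-plane coordinates with $E$ placed at infinity, derive the closure relations $y_D=y_A\sinh c/\sinh a$ and $x_D-x_A=y_A(\cosh a+\cosh c)/\sinh a$, and read off the exact identity $\cosh d=(x_D-x_A)/y_A=(\cosh a+\cosh c)/\sinh a$. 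The two approaches are consistent: eliminating $b$ from the paper's pair of identities gives $\sinh b=(\cosh a+\cosh c)/(\sinh a\sinh c)$ and hence exactly your formula, so your coordinate computation in effect re-proves both of Beardon's relations in this configuration and yields a sharper exact statement, at the cost of some bookkeeping (the implicit checks that the pentagon closes up in the assumed configuration, that the two geodesics are disjoint --- which follows since $\cosh a+\cosh c>\sinh a$ --- and that the orthogonal segment of the lemma is the common perpendicular of the complete geodesics, a point the paper likewise only asserts by saying the segment lies inside the pentagon). Your final step, $e^{c-a}\leq\cosh d\leq Ke^{c-a}$ for $a\geq 1$ and $c>a$ combined with $\log x\leq\cosh^{-1}x\leq\log(2x)$, is the same kind of elementary estimate the paper uses, and your remark that $b$ is redundant matches the paper, where $b$ appears only as an intermediate quantity determined by $a$ and $c$.
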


\begin{proof}
Note that the segment orthogonal to $AB$ and $DE$ is necessarily inside the pentagon $ABCDE$. Denote by $A'$ the endpoint in $AB$ and by $D'$ the endpoint in $DE$ of the segment (cf. Figure 5). From the pentagon $\Sigma$, we get (cf. Beardon \cite[page 159]{Bear}) 
$$
\cosh a\cosh c+1=\sinh a\cosh b\sinh c
$$
which gives
$$
\cosh b=\frac{\cosh a\cosh c+1}{\sinh a\sinh c}.
$$
Further
$$
\frac{\cosh a\cosh c+1}{\sinh a\sinh c}=\frac{(1+e^{-2a})(1+e^{-2c})+4e^{-(a+c)}}{(1-e^{-2a})(1-e^{-2c})}
$$
and Taylor's expansion gives, for some constant $C>0$,
$$
1+Ce^{-2a}\geq\cosh b\geq 1+e^{-2a}.
$$
Consequently, for a constant $C'>0$ which depends on $C$,
$$
C'e^{-a}\geq\sinh b=\sqrt{\cosh^2b-1}\geq\sqrt{2}e^{-a}.
$$
From the right angled pentagon $A'BCDD'$, we get (cf. Beardon \cite[page 159]{Bear}) 
$$
C''e^{-a}e^{c}\geq\cosh d=\sinh b\sinh c\geq\frac{\sqrt{2}}{4}e^{-a}e^{c}
$$
when $c$ is large enough which implies
$$
C'''e^{c-a}\geq e^{d}\geq\cosh d\geq \frac{\sqrt{2}}{4}e^{c-a}.
$$
Taking logarithms in the above inequality, we get
$$
C_2+c-a\geq d\geq \log\frac{\sqrt{2}}{4}+c-a.
$$
\end{proof}


\begin{figure}[h]
\leavevmode \SetLabels
\L(.33*.4) $A$\\
\L(.33*.0) $B$\\
\L(.62*.0) $C$\\
\L(.65*.35) $D$\\
\L(.52*.95) $E$\\
\L(.33*.12) $A'$\\
\L(.6*.48) $D'$\\
\endSetLabels
\begin{center}
\AffixLabels{\centerline{\epsfig{file =FigHexagon.pdf,width=4.0cm,angle=0} }}
\vspace{-30pt}
\end{center}
\caption{Lifts.} 
\label{fig1comHypStruc}
\end{figure}


We study the closure $\overline{T_{qc}(X_0)}$ of the quasiconformal Teichm\"uller space $T_{qc}(X_0)$
for the length spectrum metric in the length spectrum Teichm\" uller space $T_{ls}(X_0)$. We establish

\begin{thm}
\label{thm:closure-twists}
 Let $X_0$ be a geodesically complete  tight flute surface built by gluing pairs of pants with rapidly increasing cuff lengths $\{ l_n\}$.
Then the closure $\overline{T_{qc}(X_0)}$ of the quasiconformal Teichm\"uller space $T_{qc}(X_0)$ in the length spectrum metric contains all surfaces with the Fenchel-Nielsen coordinates $\{ (l_n,t_n)\}_n$, where $-Cl_n\leq t_n\leq Cl_n$, for  $C>0$, and the lengths $\{l_n\}$ correspond to a marked surface in $T_{qc}(X_0)$.
\end{thm}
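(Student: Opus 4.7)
The plan is to exhibit, for a given $Y$ with Fenchel--Nielsen coordinates $\{(l_n,t_n)\}$ satisfying $|t_n|\leq Cl_n$, an explicit approximating sequence $\{X_k\}\subset T_{qc}(X_0)$ converging to $Y$ in the length spectrum metric. Let $\{t_n^0\}$ denote the twists of $X_0$ and define $X_k$ to be the marked flute surface with cuff lengths $\{l_n\}$ and twists $t_n^{X_k}=t_n$ for $n\leq k$ and $t_n^{X_k}=t_n^0$ for $n>k$. Since $X_k$ differs from $X_0$ only by finitely many Fenchel--Nielsen twists of bounded magnitudes $|t_n-t_n^0|\leq (C+1)l_n$, each supported near a single cuff, their finite composition is a quasiconformal map $X_0\to X_k$; hence $X_k\in T_{qc}(X_0)$.

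The key estimate is $d_{ls}(X_k,Y)\to 0$. Cuffs contribute nothing since $\ell_{X_k}(\alpha_n)=l_n=\ell_Y(\alpha_n)$. For any other simple closed geodesic $\alpha$, put $S=\{m:i(\alpha,\alpha_m)>0\}$; in a flute each $\alpha_m$ is separating, so $i(\alpha,\alpha_m)\in\{0,2\}$. If $S\subseteq\{1,\ldots,k\}$ then $X_k$ and $Y$ share every twist crossed by $\alpha$ and $\ell_{X_k}(\alpha)=\ell_Y(\alpha)$. Otherwise set $M=\max S>k$, and integrate Wolpert's twist derivative $|\partial\ell_\alpha/\partial t_m|\leq i(\alpha,\alpha_m)$ across the twist changes to obtain
$$|\ell_{X_k}(\alpha)-\ell_Y(\alpha)|\leq 2\sum_{m\in S,\,m>k}|t_m^0-t_m|\leq 2(C+1)\sum_{m\leq M}l_m=O(l_M),$$
where the last step uses $\sum_{m<M}l_m=o(l_M)$ from the rapid-growth hypothesis.

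The main technical input is a matching lower bound on $\ell(\alpha)$. Since $M\in S$ but $M+1\notin S$, the portion of $\alpha$ outside $\alpha_M$ is a single essential arc $\beta$ in the pants $P_M$ joining $\alpha_M$ to itself and separating the cusp $c_M$ from $\alpha_{M+1}$. Together with the sub-arc of $\alpha_M$ on the $\alpha_{M+1}$-side, $\beta$ bounds an embedded annulus in $P_M$ whose other boundary component is $\alpha_{M+1}$; both boundary circles then represent the free homotopy class of $\alpha_{M+1}$, and $\alpha_{M+1}$ is its length-minimising representative in $P_M$. Hence $l_{M+1}\leq \mathrm{length}(\beta)+l_M$, i.e.\ $\ell(\alpha)\geq l_{M+1}-l_M$. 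Combining,
$$\frac{|\ell_{X_k}(\alpha)-\ell_Y(\alpha)|}{\min\{\ell_{X_k}(\alpha),\ell_Y(\alpha)\}}\leq\frac{O(l_M)}{l_{M+1}-l_M}=O\!\left(\frac{1}{l_{M+1}/l_M-1}\right),$$
which tends to $0$ as $M\to\infty$ by rapid growth; taking $\sup_\alpha$ and then $k\to\infty$ forces $M>k\to\infty$, so $d_{ls}(X_k,Y)\to 0$ and $Y\in\overline{T_{qc}(X_0)}$. The main obstacle is this geometric lower bound on $\ell(\alpha)$: simplicity of $\alpha$ pins the isotopy class of $\beta$, and one must convert the rapid-growth asymptotic $l_M/l_{M+1}\to 0$ into the quantitative dominance $\ell(\alpha)\gg l_M$ strong enough to absorb the twist-driven error. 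Lemma~\ref{lem:pent} is the analytic shadow of the same phenomenon, measuring the depth of the short cuff $\alpha_M$ toward the cusp side of $P_M$.
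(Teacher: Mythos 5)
Your overall strategy is the same as the paper's: approximate the target surface by marked surfaces $X_k$ obtained by changing only finitely many twists (a bounded, finitely supported twist deformation, hence a quasiconformal deformation, so $X_k\in T_{qc}(X_0)$), bound the length distortion of a simple closed geodesic $\alpha$ by the sum of the remaining twist changes, and beat that error with a lower bound $\ell(\alpha)\gtrsim l_{M+1}-l_M$ coming from the geometry of the last pair of pants $P_M$ that $\alpha$ enters. Your annulus/free-homotopy argument for that lower bound is a nice soft substitute for the paper's trigonometric Lemma~\ref{lem:pent}, and the final limiting argument (sup over $\alpha$, then $k\to\infty$ forcing $M>k$) matches the paper.

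There is, however, a genuine gap in your upper bound: the claim that $i(\alpha,\alpha_m)\in\{0,2\}$ because each $\alpha_m$ is separating is false. Separation only forces the intersection number to be even; on the flute (a planar surface with infinitely many punctures) a simple closed geodesic can cross a fixed cuff arbitrarily many times. For instance, if $\beta$ is a simple closed curve with $i(\beta,\alpha_m)=2$ (say encircling the cusps of $P_{m-1}$ and $P_m$), then $i\bigl(T_\beta^N(\alpha_m),\alpha_m\bigr)\geq 4N$. Consequently Wolpert's derivative bound only yields $|\ell_{X_k}(\alpha)-\ell_Y(\alpha)|\leq\sum_{m>k} i(\alpha,\alpha_m)\,|t_m^0-t_m|$, and your displayed estimate $O(l_M)$ does not follow as written: a curve may cross a low-index cuff $\alpha_m$, $m<M$, with large multiplicity through \emph{short} strands (arcs of $P_m$ from $\alpha_m$ to $\alpha_{m+1}$, or arcs around a cusp near a very long cuff, can be very short when the cuffs grow rapidly), so the weighted sum need not be $o(\ell(\alpha))$ without further argument. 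The phrase ``a single essential arc $\beta$ in $P_M$'' rests on the same false claim, though that part is harmless, since every strand of $\alpha$ in $P_M$ is of the same arc type and your annulus bound applies to each strand. To close the gap one must control the multiplicities, e.g.\ by charging each crossing of $\alpha_m$ to the first long strand (of type $\alpha_{m'}\to\alpha_{m'}$ in some $P_{m'}$ with $m'\geq m$, of length at least roughly $l_{m'+1}-l_{m'}$) that the curve subsequently traverses, so that $\sum_m i(\alpha,\alpha_m)\,l_m$ is dominated by $o(1)\cdot\ell(\alpha)$. (For what it is worth, the paper's own proof uses the even cruder bound $l_{X^{*}}(\gamma)\leq l_{X_k}(\gamma)+\sum_i|t_i''|$ with no intersection factor at all, so it glosses over the same point; your version at least makes the dependence on $i(\alpha,\alpha_m)$ explicit, but the justification that it is at most $2$ is not correct.)
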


\begin{proof}
Denote by $\{ P_n\}$ the family of tight geodesic pairs of pants that are used to obtain the flute surface $X_0$. Let $\alpha_n$ and $\alpha_{n+1}$ be the cuffs of $P_n$ that are not cusps. Then $\alpha_n$ has length $l_n$.

Let $\{ (l_n,t_n')\}_{n\in\mathbb{N}}$ be the Fenchel-Nielsen coordinates of a marked surface $X$ in $T_{qc}(X_0)$. Define $t_n''=t_n-t_n'$ for some $t_n$ with $-Cl_n\leq t_n\leq Cl_n$ and $C>0$. Let $f_k:X\to X_k$ be a quasiconformal marking map from $X$ to the surface $X_k$ obtained by twists $t_i''$ around $\alpha_i$ on the surface $X$ for $i=1,2,\ldots ,k$. It is clear that $f_k$ can be chosen to be a quasiconformal map since we twist around only finitely many geodesics (cf. \cite{Sar}). We prove that the marked surfaces $X_k$ converge in the length spectrum metric to the surface $X^{*}$ whose Fenchel-Nielsen coordinates are $\{ (l_n,t_n)\}_{n\in\mathbb{N}}$.

We divide each pair of pants $P_n$ into two pentagons with one ideal vertex by three simple geodesic arcs: the first arc, denoted by $\beta_n$, is orthogonal to $\alpha_n$ and $\alpha_{n+1}$ at its endpoints, the second arc, denoted by $\beta_n^1$, is orthogonal to $\alpha_n$ at its endpoint and it finishes in the cusp, and the third arc, denoted by $\beta_n^2$, is orthogonal to $\alpha_{n+1}$ at its endpoint and it also finishes in the cusp. Note that the two pentagons are isometric and that they have four straight angles and one zero angle, i.e. one ideal vertex. Let $b_n$ be the length of $\beta_n$.

Let $\gamma$ be an arbitrary simple closed geodesic on $X$. If $\gamma$ does not intersect any $\{ \alpha_i \}$ for $i\geq k$ then $l_{X_k}(\gamma )=l_{X^{*}}(\gamma )$ and there is nothing to be proved in this case. 

Assume that $\gamma$ intersects $\alpha_i$ for $i=i_0,i_0+1,\ldots ,j_0$ with $k<j_0$. In this case we need to estimate the size of $l_{X^{*}}(\gamma )$ compared $l_{X_k}(\gamma )$. We first estimate the size of $l_{X_k}(\gamma )$.
Note that $\gamma$ intersects the pants $P_{j_0}$ whose boundary geodesics are $\alpha_{j_0}$ and $\alpha_{j_0+1}$. By assumption, $\gamma$ does not intersect $\alpha_{j_0+1}$ which implies that $\gamma$ enters and exists the pants $P_{j_0}$ through $\alpha_{j_0}$. This implies that $\gamma$ necessarily intersects the geodesic arc $\beta_{j_0}^2$ orthogonal to $\alpha_{j_0+1}$ that ends in the puncture because otherwise $\gamma$ could be homotoped such that it does not intersect $\alpha_{j_0}$ which is impossible.

We estimate the length of $\gamma\cap P_{j_0}$. Consider the lift of the situation to the universal covering $\mathbb{H}$. Fix a single component $\tilde{\alpha}_{j_0}$ of the lift of $\alpha_{j_0}$ to $\mathbb{H}$. Denote by $\Sigma_1$ and $\Sigma_2$ the two pentagons that the pants $P_{j_0}$ is divided
into. Consider all lifts of $\Sigma_1$ and $\Sigma_2$ that have one side on $\tilde{\alpha}_{j_0}$. A lift $\tilde{\gamma}$ of $\gamma$ connects $\tilde{\alpha}_{j_0}$ with a lift $\tilde{\beta}_{j_0}^2$ of $\beta_{j_0}^2$ that belongs to a lift of $\Sigma_1$ or $\Sigma_2$ with one boundary side on $\tilde{\alpha}_{j_0}$.   
The length of the segment of $\tilde{\gamma}$ that has one endpoint on $\tilde{\alpha}_{j_0}$ and the other endpoint on $\tilde{\beta}_{j_0}^2$ is greater than the length of the common perpendicular geodesic arc to $\tilde{\alpha}_{j_0}$ and $\tilde{\beta}_{j_0}^2$. It is immediate that the common perpendicular geodesic arc $p$ lies in a single lift of one of the two pentagons (cf. Figure 2). Then Lemma \ref{lem:pent} implies that the length of the common perpendicular arc $p$ is at least $C_1+\frac{l_{j_0+1}-l_{j_0}}{2}$. 
It follows that
\begin{equation}
\label{eq:lowerbd}
l_{X^{*}}(\gamma ),l_{X_k}(\gamma )\geq C_1+l_{j_0+1}-l_{j_0}.
\end{equation}

\begin{figure}[h]
\leavevmode \SetLabels
\L(.2*.8) $\tilde{\alpha}_{j_0}$\\
\L(.1*.4) $\tilde{\gamma}$\\
\L(.4*.6) $P$\\
\L(.7*.8) $\tilde{\beta}_{j_0}^1$\\
\L(.55*.2) $\tilde{\beta}_{j_0}$\\
\L(.72*.17) $\tilde{\beta}_{j_0}^2$\\
\endSetLabels
\begin{center}
\AffixLabels{\centerline{\epsfig{file =FigLift1.pdf,width=12.0cm,angle=0} }}
\vspace{-30pt}
\end{center}
\caption{Lifts.} 
\label{figure2}
\end{figure}


Since $X^{*}$ is obtained by twisting around $\alpha_n$ by the amount $t_n''$ and $|t_n''|\leq (C+1)l_n$, we obtain
\begin{equation}
\label{eq:doublebd}
l_{X^{*}}(\gamma )\leq l_{X_k}(\gamma )+\sum_{i=i_0}^{j_0}|t_i''|\leq l_{X_k}(\gamma )+o(l_{j_0+1}).
\end{equation}
We get
$$
\frac{l_{X^{*}}(\gamma )}{l_{X_k}(\gamma )}\leq 1+\frac{1}{l_{X_k}(\gamma )}o(l_{j_0+1})\leq 1+\frac{o(l_{j_0+1})}{C_1+l_{j_0+1}-l_{j_0}}\leq 
1+\frac{o(l_{k+1})}{C_1+l_{k+1}-l_{k}}
\rightarrow 1
$$
as $k\to\infty$ uniformly in $\gamma$.

Notice that $X_k$ is obtained by twisting $-t_i''$ along $\alpha_i$ for $i>k$. If $
\gamma$ is a simple closed geodesic that intersects $\alpha_i$ for $i=i_0,i_0+1,\ldots ,j_0$. If $j_0\leq k$ then
$l_{X_k}(\gamma )/l_{X^{*}}(\gamma )=1$. If $j_0>k$ then
$$
l_{X_k}(\gamma )\leq l_{X^{*}}(\gamma )+\sum_{i=i_0}^{j_0}|t_i''|\leq l_{X^{*}}(\gamma )+o(l_{j_0+1})
$$
which implies as before that
$$
\frac{l_{X_k}(\gamma )}{l_{X^{*}}(\gamma )}\leq 1+\frac{1}{l_{X^{*}}(\gamma )}o(l_{j_0+1})\leq 1+\frac{o(l_{j_0+1})}{C_1+l_{j_0+1}-l_{j_0}}\leq 
1+\frac{o(l_{k+1})}{C_1+l_{k+1}-l_{k}}
\rightarrow 1
$$
as $k\to\infty$ uniformly in $\gamma$. 

We obtained the convergence of $X_k\in T_{qc}(X_0)$ to $X^{*}$ in the length spectrum metric.
\end{proof}

We need the following two lemmas.

\begin{lem}
\label{lem:small_side}
Let $\Sigma$ be a pentagon with four right angles and one ideal vertex, i.e. zero angle. Let $a$, $b$ and $c$ be the lengths of three finite sides of $\Sigma$ in the counterclockwise order. Assume that $c>a>1$, $a\to\infty$ and $c/a\to\infty$.
Then
$$
b=2e^{-a}+o(e^{-a})
$$
where $o(e^{-a})/e^{-a}\to 0$ as $a\to\infty$.
\end{lem}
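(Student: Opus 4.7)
The plan is to use exactly the same pentagon identity that powered Lemma \ref{lem:pent}, namely
\[
\cosh b=\frac{\cosh a\cosh c+1}{\sinh a\sinh c},
\]
but this time extract the second-order asymptotics of $\cosh b$ rather than just a linear estimate of $d$. The difference from Lemma \ref{lem:pent} is crucial: there $c-a$ could be comparable to $a$, so the $e^{-2a}$ term was exponentially dominated by terms involving $c$; here the hypothesis $c/a\to\infty$ forces $c-a\to\infty$, which inverts that comparison.

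First I would rewrite the identity as in the proof of Lemma \ref{lem:pent}:
\[
\cosh b=\frac{(1+e^{-2a})(1+e^{-2c})+4e^{-(a+c)}}{(1-e^{-2a})(1-e^{-2c})}.
\]
Expanding, the numerator equals $1+e^{-2a}+e^{-2c}+e^{-2(a+c)}+4e^{-(a+c)}$ and the denominator equals $1-e^{-2a}-e^{-2c}+e^{-2(a+c)}$. Under the assumption $c/a\to\infty$ with $a\to\infty$, we have $c-a\to\infty$, so
\[
\frac{e^{-2c}}{e^{-2a}}=e^{-2(c-a)}=o(1),\qquad \frac{e^{-(a+c)}}{e^{-2a}}=e^{-(c-a)}=o(1),\qquad \frac{e^{-2(a+c)}}{e^{-2a}}=e^{-2c}=o(1).
\]
Hence all terms except $1$ and $\pm e^{-2a}$ are $o(e^{-2a})$, yielding
\[
\cosh b=\frac{1+e^{-2a}+o(e^{-2a})}{1-e^{-2a}+o(e^{-2a})}=1+2e^{-2a}+o(e^{-2a}),
\]
where the last equality comes from a first-order geometric series expansion of the denominator.

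To pass from $\cosh b$ to $b$, I would use the identity $\cosh b-1=2\sinh^{2}(b/2)$. This gives $\sinh^{2}(b/2)=e^{-2a}(1+o(1))$, so $\sinh(b/2)=e^{-a}(1+o(1))$. Since $\sinh(b/2)\to 0$, we have $b\to 0$, and the Taylor expansion $b/2=\sinh(b/2)+O(\sinh^{3}(b/2))$ gives $b/2=e^{-a}+o(e^{-a})$, i.e.
\[
b=2e^{-a}+o(e^{-a}),
\]
as required. No step really poses a difficulty beyond careful bookkeeping; the only point that could be mishandled is forgetting that the hypothesis $c/a\to\infty$ (rather than merely $c>a$) is what forces the $c$-dependent error terms to be $o(e^{-2a})$, which is exactly what makes the $2e^{-a}$ asymptotics visible.
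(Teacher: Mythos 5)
Your proof is correct and follows essentially the same route as the paper: the same right-angled pentagon relation $\cosh a\cosh c+1=\sinh a\cosh b\sinh c$, the same exponential rewriting, and the expansion $\cosh b=1+2e^{-2a}+o(e^{-2a})$ using $c-a\to\infty$. The only difference is that you spell out the final passage from $\cosh b$ to $b$ via $\cosh b-1=2\sinh^{2}(b/2)$, which the paper compresses into ``the result follows.''
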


\begin{proof}
We have (cf. \cite{Bear})
$$
\cosh a\cosh c+1=\sinh a\cosh b\sinh c
$$
which gives
$$
\cosh b=\frac{\cosh a\cosh c+1}{\sinh a\sinh c}=
\frac{(1+e^{-2a})(1+e^{-2c})+4e^{-(a+c)}}{(1-e^{-2a})(1-e^{-2c})}
=1+2e^{-2a}+o(e^{-2a}).
$$
and the result follows.
\end{proof}

\begin{lem}
\label{lem:distance_between_geodesics}
Let $\mathcal{Q}$ be a hyperbolic quadrilateral with three right angles and a fourth angle $0<\phi<\frac{\pi}{2}$. Let $a_1$ and $a_2$ be the lengths of the sides of $\mathcal{Q}$ with two right angles, and let $b_1$ and $b_2$ be the lengths of the opposite sides, respectively.
Then
$$
\sinh b_1=\sinh a_1\cosh b_2
$$
and
$$
\tanh a_1\sinh b_2\tan\phi=1.
$$
\end{lem}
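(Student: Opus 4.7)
The quadrilateral $\mathcal{Q}$ is a Lambert quadrilateral, so my plan is to deduce both identities from the standard trigonometric identities for such quadrilaterals (as collected in Beardon \cite{Bear}, already cited for the pentagon formulas in Lemma \ref{lem:pent}). To fix notation I label the vertices $V_1,V_2,V_3,V_4$ in cyclic order so that the three right angles sit at $V_1,V_2,V_3$ and the angle $\phi$ sits at $V_4$, with $|V_1V_2|=a_1$, $|V_2V_3|=a_2$, $|V_3V_4|=b_1$ (opposite $a_1$) and $|V_4V_1|=b_2$ (opposite $a_2$).

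For the first identity, I plan to appeal directly to the Lambert identity
\begin{equation*}
\sinh b_1 = \sinh a_1 \cosh b_2,
\end{equation*}
which expresses the side $b_1$ opposite $a_1$ in terms of the sides $a_1$ and $b_2$ meeting at the right-angled vertex $V_1$ adjacent to both. The analogous identity with $a_1,b_2$ replaced by $a_2,b_1$ reads $\sinh b_2 = \sinh a_2 \cosh b_1$ and will be needed for the second part.

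For the second identity I plan to invoke two further Lambert identities, namely $\cos\phi = \sinh a_1\sinh a_2$ and $\cosh a_1 = \cosh b_1 \sin\phi$. Dividing these gives
\begin{equation*}
\tan\phi = \frac{\sin\phi}{\cos\phi} = \frac{\cosh a_1/\cosh b_1}{\sinh a_1\sinh a_2} = \frac{\coth a_1}{\sinh a_2\cosh b_1},
\end{equation*}
and then the companion identity $\sinh b_2 = \sinh a_2\cosh b_1$ rewrites the denominator as $\sinh b_2$, yielding $\tan\phi = \coth a_1/\sinh b_2$, which is the desired relation $\tanh a_1\sinh b_2\tan\phi = 1$.

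The main obstacle is simply the bookkeeping of which side is adjacent to which vertex and which is opposite; once this is set up, each identity falls out of a standard Lambert formula. If a self-contained derivation were preferred (avoiding reliance on a compiled list of Lambert identities), the natural fallback is to draw the diagonal $V_2V_4$, splitting $\mathcal{Q}$ into two hyperbolic right triangles $\triangle V_1V_2V_4$ with legs $a_1,b_2$ and $\triangle V_2V_3V_4$ with legs $a_2,b_1$, apply the right-triangle formulas $\cosh c=\cosh a\cosh b$ and $\tan A=\tanh a/\sinh b$ in each, and impose the angle constraints $\angle V_1V_2V_4+\angle V_4V_2V_3=\pi/2$ at $V_2$ and $\angle V_1V_4V_2+\angle V_2V_4V_3=\phi$ at $V_4$; both stated identities then follow after routine manipulation.
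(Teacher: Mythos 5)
Your argument is correct in substance, but it reaches the two formulas by a different route than the paper, and its first half leans on a citation that should really be a derivation. The paper proves both identities from the three relations of Beardon's Theorem 7.17.1, namely $\sinh a_1\sinh a_2=\cos\phi$, $\cosh a_1=\cosh b_1\sin\phi$, $\cosh a_2=\cosh b_2\sin\phi$, by a fairly long elimination; those three are all that theorem gives, so the identity $\sinh b_1=\sinh a_1\cosh b_2$ (and its companion $\sinh b_2=\sinh a_2\cosh b_1$) is precisely what has to be established, not quoted -- as written, your primary plan for the first identity cites the statement being proved. That said, the gap is only presentational: the companion identity follows from the first by the relabeling symmetry $V_1\leftrightarrow V_3$ (which swaps $a_1\leftrightarrow a_2$, $b_1\leftrightarrow b_2$), and your fallback -- cutting $\mathcal{Q}$ along the diagonal $V_2V_4$ into right triangles with legs $(a_1,b_2)$ and $(a_2,b_1)$, using $\cosh d=\cosh a_1\cosh b_2=\cosh a_2\cosh b_1$ together with the tangent formula and the constraint that the two angles at $V_2$ sum to $\pi/2$ -- does yield $\sinh b_1=\sinh a_1\cosh b_2$ (the resulting equation is linear in $\sinh^2 b_1$ with this as its unique solution), so either that decomposition or the paper's algebra should be promoted from fallback to the actual proof of the first formula. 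Your treatment of the second identity is where your route genuinely improves on the paper: writing $\tan\phi=\sin\phi/\cos\phi=\frac{\cosh a_1/\cosh b_1}{\sinh a_1\sinh a_2}=\frac{\coth a_1}{\sinh a_2\cosh b_1}$ and then invoking the companion identity to replace $\sinh a_2\cosh b_1$ by $\sinh b_2$ gives $\tanh a_1\sinh b_2\tan\phi=1$ in three lines, versus the paper's page of substitutions; the price is that you must have the companion identity in hand first, which is why the first formula (and its symmetric twin) needs an honest derivation rather than an appeal to a list.
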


\begin{proof}
From \cite[page 157, Theorem 7.17.1]{Bear}, we have
\begin{equation}
\label{eq:quadrilateral}
\begin{array}l
\sinh a_1\sinh a_2=\cos\phi\\
\cosh a_1=\cosh b_1\sin\phi\\
\cosh a_2=\cosh b_2\sin\phi
\end{array}
\end{equation}
Using (\ref{eq:quadrilateral}) and $\sin^2\phi +\cos^2\phi =1$, we get
$$
\sinh^2a_1\sinh^2a_2+\frac{\cosh^2a_1}{\cosh^2b_1}=1
$$
which implies
$$
\sinh^2a_1\sinh^2a_2\cosh^2b_1+\cosh^2a_1=\cosh^2b_1.
$$
Substituting $\sinh^2a_2=\cosh^2a_2-1$ above we get
$$
\sinh^2a_1\cosh^2a_2\cosh^2b_1-\sinh^2a_1\cosh^2b_1+\cosh^2a_1=\cosh^2b_1.
$$
By diving the third equation with the second equation in (\ref{eq:quadrilateral}), we get $\cosh a_2=\frac{\cosh b_2}{\cosh b_1}\cosh a_1$ and substituting above gives
$$
\sinh^2a_1(\frac{\cosh b_2}{\cosh b_1}\cosh a_1)^2\cosh^2b_1-\sinh^2a_1\cosh^2b_1+\cosh^2a_1=\cosh^2b_1
$$
which in turn gives
$$
\sinh^2a_1\cosh^2 b_2\cosh^2 a_1-\sinh^2a_1\cosh^2b_1-\cosh^2b_1+\cosh^2a_1=0.
$$
Since $\sinh^2a_1+1=\cosh^2a_1$, the above gives
$$
\sinh^2a_1\cosh^2 b_2\cosh^2 a_1-\cosh^2a_1\cosh^2b_1+\cosh^2a_1=0.
$$
Finally, dividing with $\cosh^2a_1$ gives
$$
\sinh^2a_1\cosh^2 b_2-\cosh^2b_1+1=0
$$
and the first formula follows easily.

To prove the second formula, note that (\ref{eq:quadrilateral}) implies
$$
\sinh^2a_2=\frac{\cos^2\phi}{\sinh^2a_1}
$$
and then
$$
\cosh^2 a_2=\sinh^2a_2+1=\frac{\cos^2\phi}{\sinh^2a_1}+1.
$$
By using (\ref{eq:quadrilateral}) above, we get
$$
\frac{\cos^2\phi}{\sinh^2a_1}+1=\cosh^2b_2\sin^2\phi
$$
which gives 
$$
\cos^2\phi +\sinh^2a_1=\sinh^2a_1\cosh^2b_2\sin^2\phi .
$$
Further
$$
\cos^2\phi +\sinh^2a_1\cos^2\phi=\sinh^2a_1\cosh^2b_2\sin^2\phi 
-\sinh^2a_1\sin^2\phi 
$$
and then
$$
\cosh^2a_1\cos^2\phi =\sinh^2a_1\sinh^2b_2\sin^2\phi
$$
and the second formula follows.
\end{proof}

The following theorem proves that pointwise convergence of twists implies the convergence in the length spectrum metric when the lengths of the cuffs in the pants decomposition are fixed.

\begin{thm}
\label{thm:conv_basepoint}
Let $X_0$ be a  geodesically  complete tight  flute surface with twists $\{ t_n\}$ and rapidly increasing cuff lengths $\{ l_n\}$. 
Let $X_k$ be  marked hyperbolic surface with cuff lengths equal to $\{ l_n\}$ and twists $t_{X_k}(\alpha_n)=t_n+O(l_n)$. If $\lim_{k\to\infty}t_{X_k}(\alpha_n)=t_n$ for each $n$, then
$X_k$ converges to $X_0$ in the length spectrum metric.
\end{thm}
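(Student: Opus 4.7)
The plan is to bound $d_{ls}(X_k, X_0) = \tfrac{1}{2}\sup_\gamma |\log (l_{X_k}(\gamma)/l_{X_0}(\gamma))|$, the supremum being over simple closed geodesics $\gamma$ on $X_0$, and show it tends to $0$ as $k \to \infty$. Set $s_n^{(k)} := t_{X_k}(\alpha_n) - t_n$, so the hypothesis reads $|s_n^{(k)}| \leq C l_n$ with $s_n^{(k)} \to 0$ for each fixed $n$. For $\gamma = \alpha_i$ the two lengths are both $l_i$, so the ratio is identically $1$. For any other simple closed geodesic, the chain structure of the flute (each $\alpha_i$ separates $X_0$ into $\bigcup_{n<i} P_n$ and $\bigcup_{n \geq i} P_n$) implies by a standard connectedness argument that the set of indices $i$ with $\gamma \cap \alpha_i \neq \emptyset$ is a consecutive range $\{i_0, \ldots, j_0\}$. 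I split the supremum according to $j_0(\gamma)$.

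In the \emph{deep} regime $j_0(\gamma) \geq N$, I apply the pentagon estimate of Lemma \ref{lem:pent} exactly as in the proof of Theorem \ref{thm:closure-twists} to get
$$
l_{X_0}(\gamma),\ l_{X_k}(\gamma) \geq C_1 + l_{j_0+1} - l_{j_0},
$$
and compare the two surfaces by twisting along $\alpha_{i_0}, \ldots, \alpha_{j_0}$ by the amounts $s_{i_0}^{(k)}, \ldots, s_{j_0}^{(k)}$ to get
$$
|l_{X_k}(\gamma) - l_{X_0}(\gamma)| \leq \sum_{i=i_0}^{j_0} |s_i^{(k)}| \leq C \sum_{i=1}^{j_0} l_i = o(l_{j_0+1}),
$$
uniformly in $k$, by the rapid-growth hypothesis. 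Since $l_{j_0}/l_{j_0+1} \to 0$, the log-ratio tends to $0$ uniformly in $k$ as $j_0(\gamma) \to \infty$, so given $\epsilon > 0$ there exists $N = N(\epsilon)$ such that $|\log(l_{X_k}(\gamma)/l_{X_0}(\gamma))| < \epsilon$ for every $k$ and every $\gamma$ with $j_0(\gamma) \geq N$.

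In the \emph{shallow} regime $j_0(\gamma) < N$, such a $\gamma$ is disjoint from $\alpha_{N+1}$ and hence contained in the finite-type subsurface $X_0(N) := \bigcup_{n \leq N} P_n$, with boundary $\alpha_{N+1}$ and finitely many cusps. The restrictions $X_k(N)$ and $X_0(N)$ share identical length coordinates $l_1, \ldots, l_{N+1}$, and their finitely many twist coordinates $t_{X_k}(\alpha_i)$, $1 \leq i \leq N$, converge to $t_i$ by the pointwise hypothesis. This is convergence in the \emph{finite-dimensional} Teichm\"uller space of $X_0(N)$; in particular the marking $X_0(N) \to X_k(N)$ can be realized by a quasiconformal homeomorphism with constant $K_k \to 1$, and Wolpert's inequality gives
$$
\sup_{\gamma \subset X_0(N)} \Bigl|\log \frac{l_{X_k}(\gamma)}{l_{X_0}(\gamma)}\Bigr| \leq \log K_k \to 0.
$$
Hence there is $K = K(N, \epsilon)$ with this supremum at most $\epsilon$ for all $k \geq K$. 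Combining the two regimes yields $d_{ls}(X_k, X_0) \leq \epsilon$ for all $k \geq K$, as required.

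The essential difficulty is the uniformity in $k$ of the deep estimate: the $O(l_n)$ twist bound combined with $\sum_{i \leq n} l_i = o(l_{n+1})$ is precisely what makes the twist discrepancy negligible compared to the pentagon-forced length growth $l_{j_0+1} - l_{j_0}$, and it is this balance — identical to the one exploited in Theorem \ref{thm:closure-twists} — that lets the pointwise (rather than globally uniform) twist convergence suffice. The shallow estimate then reduces to classical finite-type Teichm\"uller convergence plus Wolpert, and is essentially bookkeeping.
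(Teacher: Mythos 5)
Your proposal is correct and follows essentially the same route as the paper's proof: you split the simple closed geodesics according to the largest index $j_0$ of a cuff they cross, and in the deep regime you use exactly the paper's two ingredients, namely the lower bound $l_{X_0}(\gamma),\,l_{X_k}(\gamma)\geq C_1+l_{j_0+1}-l_{j_0}$ coming from Lemma \ref{lem:pent} (which depends only on the fixed cuff lengths, hence applies to every surface in the family) and the bound of the length difference by the sum of the twist discrepancies, which is $o(l_{j_0+1})$ uniformly in $k$ by the rapid-growth condition. The only place you diverge is the shallow regime: the paper simply notes that for $j_0\leq n_1$ the finitely many discrepancies $|t_{X_k}(\alpha_j)-t_j|$ are eventually smaller than $\epsilon$ and divides by $l_{X_0}(\gamma)$, which the same pentagon bound keeps bounded away from $0$, whereas you pass to the truncated finite-type surface $X_0(N)$, use convergence of its finitely many Fenchel--Nielsen coordinates to produce marking-compatible quasiconformal maps with dilatation tending to $1$, and conclude with Wolpert's inequality. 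Your variant is valid (it uses that $X_0(N)$ is a geodesically convex subsurface, so geodesics of $X_0$ contained in it have the same length there as in $X_0$, and that Fenchel--Nielsen coordinates are continuous parameters for the finite-dimensional Teichm\"uller space of the bordered surface), but it is heavier machinery than the paper's direct estimate, which gets the shallow case from the pointwise convergence of finitely many twists alone.
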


\begin{proof}
Let $\epsilon >0$ be given. We need to prove that
$|\frac{l_{X_k}(\beta )}{l_{X_0}(\beta )}-1|<\epsilon$ for all simple closed geodesics $\beta$ on $X_0$ and for all $k\geq k_0(\epsilon )$, where $k_0(\epsilon )>0$ depends on $\epsilon$. 

Indeed, let $\beta$ be a simple closed geodesic on $X_0$. If $\beta$ is a cuff of the pants decomposition, then $l_{X_0}(\beta )=l_{X_k}(\beta )$ and $|\frac{l_{X_k}(\beta )}{l_{X_0}(\beta )}-1|=0$ for all $k$. 

If $\beta$ transversely intersects cuffs, let $\alpha_{n_0}$ be the cuff with the largest index that $\beta$ intersects. 
Then we have
$$
l_{X_0}(\beta )-\sum_{j=1}^{n_0}|t_{X_k}(\alpha_j)-t_j|\leq l_{X_k}(\beta )\leq l_{X_0}(\beta )+\sum_{j=1}^{n_0}|t_{X_k}(\alpha_j)-t_j|
$$
and dividing it with $l_{X_0}(\beta )$ we get
$$
1-\frac{\sum_{j=1}^{n_0}|t_{X_k}(\alpha_j)-t_j|}{l_{X_0}(\beta )}\leq \frac{l_{X_k}(\beta )}{l_{X_0}(\beta )}\leq 1+\frac{\sum_{j=1}^{n_0}|t_{X_k}(\alpha_j)-t_j|}{l_{X_0}(\beta )}.
$$
As in the proof of Theorem \ref{thm:closure-twists}, we have
$$
l_{X_0}(\beta )\geq l_{n_0+1}-l_{n_0}+C.
$$

Note that 
$$
\sum_{j=1}^{n_1}|t_{X_k}(\alpha_j)-t_j|=o(l_{n_1+1})
$$
and we can choose $n_1=n_1(\epsilon )$ such that
$$
1-\epsilon <\frac{\sum_{j=1}^{n}|t_{X_k}(\alpha_j)-t_j|}{l_{n+1}-l_{n}+C}<1+\epsilon
$$
for all $n\geq n_1$.

Since $t_{X_k}(\alpha_n)\to t_n$ as $k\to\infty$ for each $n$, it follows that for any $n_0\leq n_1$ the sum $\sum_{j=1}^{n_0}|t_{X_k}(\alpha_j)-t_j|<\epsilon$ for all $k\geq k_0=k_0(\epsilon )$. Therefore, $\frac{l_{X_k}(\beta )}{l_{X_0}(\beta )}$ is $\epsilon$-close to $1$ for $n_0\leq n_1$ with $k\geq k_0$ large enough. 

If $n_0>n_1$ then 
$$\frac{\sum_{j=1}^{n_0}|t_{X_k}(\alpha_j)-t_j|}{l_{X_0}(\beta )}\leq \frac{o(l_{n_0+1})}{l_{n_0+1}-l_{n_0}+C}\leq \frac{o(l_{n_1+1})}{l_{n_1+1}-l_{n_1}+C}<\epsilon$$
by the choice of $n_1$ for all $k$. It follows that $|\frac{l_{X_k}(\beta )}{l_{X_0}(\beta )}-1|<\epsilon$ when $k>k_0$. The convergence in the length spectrum metric is proved.
\end{proof}

Using Theorem \ref{thm:closure-twists} we obtain

\begin{thm}
\label{thm:larger}
 If $X_0$ is  a geodesically complete tight flute surface with rapidly increasing cuff lengths, then the  length spectrum Teichm\"uller space $T_{ls}(X_0)$ is strictly larger than the quasiconformal Teichm\"uller space $T_{qc}(X_0)$. More precisely, $\overline{T_{qc}(X_0)}-T_{qc}(X_0)$ is non-empty.
\end{thm}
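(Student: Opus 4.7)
The plan is to exhibit a specific surface $X^{*}$ lying in $\overline{T_{qc}(X_{0})}$ but not in $T_{qc}(X_{0})$. Let $\{t_{n}^{0}\}$ be the twists of the base point $X_{0}$ (with $0\le t_{n}^{0}<l_{n}$), and set $t_{n}:=t_{n}^{0}+l_{n}$. Since $|t_{n}|\le 2l_{n}$, Theorem \ref{thm:closure-twists} applied with $C=2$ places the marked surface $X^{*}$ with Fenchel--Nielsen coordinates $\{(l_{n},t_{n})\}$ in $\overline{T_{qc}(X_{0})}$. By construction the underlying hyperbolic surface of $X^{*}$ is isometric to that of $X_{0}$, because changing each twist by a full period $l_{n}$ reproduces the gluing; however, the change-of-marking homeomorphism is the infinite composition of positive Dehn twists $\Phi=\prod_{n}T_{\alpha_{n}}$ realized on pairwise disjoint standard collars, so $X^{*}$ and $X_{0}$ are distinct points of $T_{ls}(X_{0})$.

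To show $X^{*}\notin T_{qc}(X_{0})$ I would argue that no quasiconformal self-homeomorphism of $X_{0}$ is homotopic to $\Phi$. The key geometric input is that the conformal geometry around each cuff $\alpha_{n}$ degenerates: by the pentagon identity of Lemma \ref{lem:pent}, the perpendicular distance between consecutive cuffs satisfies $b_{n}\sim 2e^{-l_{n}/2}$, forcing the standard collar width about $\alpha_{n}$ to be $w_{n}\sim 2e^{-l_{n}/2}$, and hence the maximal modulus $M_{n}$ of an embedded annulus in $X_{0}$ homotopic to $\alpha_{n}$ tends to $0$ as $n\to\infty$. Kerckhoff's extremal-length characterization of the Teichm\"uller metric then implies that for any $K$-quasiconformal $f\sim\Phi$ and any simple closed geodesic $\gamma$ one has $\lambda_{X_{0}}(\Phi(\gamma))/\lambda_{X_{0}}(\gamma)\le K^{2}$. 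Applying this to a sequence of simple closed geodesics $\gamma_{n}$ supported on $P_{n-1}\cup P_{n}$ with $i(\gamma_{n},\alpha_{n})=2$ and $i(\gamma_{n},\alpha_{m})=0$ for $m\ne n$ (for instance, the curve separating the two cusps from $\{\alpha_{n-1},\alpha_{n+1}\}$), one has $\Phi(\gamma_{n})=T_{\alpha_{n}}(\gamma_{n})$, and the standard Dehn-twist lower bound gives $\lambda_{X_{0}}(T_{\alpha_{n}}(\gamma_{n}))\gtrsim \lambda_{X_{0}}(\alpha_{n})=1/M_{n}\to\infty$; provided $\lambda_{X_{0}}(\gamma_{n})$ grows slower, the ratio blows up and contradicts the bound $K^{2}$.

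The main obstacle is controlling $\lambda_{X_{0}}(\gamma_{n})$ from above: lower bounds on extremal length are free, but upper bounds demand a sufficiently thick embedded annulus (or efficient admissible metric) around $\gamma_{n}$. Since $\gamma_{n}$ lies in the two-pants subsurface $P_{n-1}\cup P_{n}$, which has two cusps, one expects to exploit the conformal room near these cusps to construct an annulus about $\gamma_{n}$ of modulus whose reciprocal grows slower than $1/M_{n}$. A cleaner alternative, sidestepping the extremal-length upper bound, is to restrict $f$ to an annular neighborhood of $\alpha_{n}$: in the homotopy class of $T_{\alpha_{n}}$, any quasiconformal representative on an annulus of modulus $M_{n}$ has dilatation at least of order $M_{n}^{-2}$ (attained by the optimal affine shear of the flat model), so $K(f)\gtrsim M_{n}^{-2}$ for every $n$, which is incompatible with a finite $K$ and completes the proof.
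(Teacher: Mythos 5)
Your first half is exactly the paper's: take twists $t_n^0+l_n$, note that the finite-twist approximants are quasiconformal deformations and that Theorems \ref{thm:closure-twists} and \ref{thm:conv_basepoint} put the limit $X^{*}$ in $\overline{T_{qc}(X_0)}$. The divergence, and the gap, is in the proof that $X^{*}\notin T_{qc}(X_0)$. The paper disposes of this in one line: the change of marking is realized by the left earthquake with measure $\sum_n l_n\,\delta_{\alpha_n}$, whose Thurston norm is unbounded because a transverse arc of length $1$ crossing $\alpha_n$ picks up weight $l_n\to\infty$, and by \cite{Sar} an earthquake with unbounded Thurston norm cannot be a quasiconformal deformation (while the finite-twist earthquakes, being Thurston-bounded, do give points of $T_{qc}(X_0)$). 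Your replacement arguments do not close this step. The ``cleaner alternative'' is not valid: a quasiconformal self-map $f$ homotopic to $\Phi=\prod_n T_{\alpha_n}$ need not preserve, or restrict to, any annular neighborhood of $\alpha_n$, and the twisting in its homotopy class can be distributed over the whole surface; the estimate $K\gtrsim M_n^{-2}$ for the affine shear of the flat annulus only bounds maps \emph{supported} in that annulus, so no lower bound on $K(f)$ follows by localization. (Also, $M_n$ is a modulus, not a collar width; the width $\sim 2e^{-l_n/2}$ from Lemma \ref{lem:pent}/\ref{lem:small_side} gives $M_n$ of order $e^{-l_n/2}/l_n$, a minor but symptomatic conflation.)

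The extremal-length route has the same unresolved core. The inequality you call the ``standard Dehn-twist lower bound,'' $\lambda_{X_0}(T_{\alpha_n}\gamma_n)\gtrsim\lambda_{X_0}(\alpha_n)$, is not standard: the standard intersection-number bound gives $\lambda(T_{\alpha_n}\gamma_n)\ge i(\cdot,\alpha_n)^2/\lambda(\alpha_n)=4M_n\to 0$, i.e.\ the wrong direction; a bound of the form you want can be extracted from a flat test metric on the maximal annulus, but only after controlling the relative twisting of $\gamma_n$ itself across that annulus. More seriously, the missing upper bound on $\lambda_{X_0}(\gamma_n)$ is not a technicality but the crux, and it is likely false in the strength you need: $\gamma_n$ crosses the same thin annulus about $\alpha_n$ twice and, as in the proof of Theorem \ref{thm:closure-twists}, has hyperbolic length at least about $l_{n+1}-l_n$, so $\lambda_{X_0}(\gamma_n)\ge \ell_{X_0}(\gamma_n)/\pi\gtrsim (l_{n+1}-l_n)/\pi\gg l_n$, while $T_{\alpha_n}\gamma_n$ differs from $\gamma_n$ by hyperbolic length $O(l_n)=o(\ell(\gamma_n))$ and by one unit of twisting across that annulus; there is no reason the ratio $\lambda(T_{\alpha_n}\gamma_n)/\lambda(\gamma_n)$ blows up for these curves, and the ``room near the cusps'' does not address the crossing of the thin annulus, which dominates both curves equally. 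This is consistent with the broader picture: $X^{*}$ is length-spectrum close to $X_0$, so the obstruction to quasiconformality is not visible in ratios attached to these natural curves; it is the per-unit-length shear along $\alpha_n$ (the Thurston norm) that detects it. I recommend replacing your second half with the earthquake argument of the paper.
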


\begin{proof}
Let $\{ t_n\}$ and $\{ l_n\}$ be the twists and the length parameters of $X_0$ for the pants decomposition $\mathcal{P}=\{\alpha_n\}$ as above. Define $t_k(\alpha_n)=t_n+|l_n|$ if $k\leq n$, and define $t_k(\alpha_n)=t_n$ if $k>n$. Define $l_k(\alpha_n)=l_n$ for all $k,n$. The marked surface $X_k$ with the Fenchel-Nielsen coordinates $\{ (t_k(\alpha_n),l_k(\alpha_n))\}$ is a quasiconformal deformation of $X_0$. Indeed, the twists are positive and there are only finitely many of them which implies that the left earthquake given by the positive twists has Thurston bounded earthquake measure. Then the left earthquake induces a quasiconformal deformation (cf. \cite{Sar}). 

Let $X_0'$ be the surface with lengths $\{ l_n\}$ and twists $\{ t_n+|l_n|\}$. Then $X_k$ converges to $X_0'$ as $k\to\infty$ in the length spectrum metric because $t_{X_k}(\alpha_n)=t_{X_0'}(\alpha_n)+O(l_n)$ for all $k,n$ and $t_{X_k}(\alpha_n)=t_{X_0'}(\alpha_n)$ for $n\leq k$ (cf. Theorem \ref{thm:conv_basepoint}). However, the limit $X_0'$ is a not a quasiconformal deformation of $X_0$ since it is obtained by left earthquake with unbounded Thurston's norm (cf. \cite{Sar}).
\end{proof}

\author{Ara Basmajian, Department of Mathematics, Graduate Center and Hunter College, CUNY, abasmajian@gc.cuny.edu}

\author{Dragomir \v Sari\' c, Department of Mathematics,  Graduate Center and Queens College, CUNY, Dragomir.Saric@qc.cuny.edu}

\end{document}